\numberwithin{equation}{section}
\theoremstyle{plain}
\newtheorem{Thm}{Theorem}[section]
\newtheorem{Lemma}[Thm]{Lemma}
\newtheorem{Coro}[Thm]{Corollary}
\newtheorem{Prop}[Thm]{Proposition}
\newtheorem{Rem}[Thm]{Remark}
\newtheorem{Ex}[Thm]{Example}
\def\i{\mathrm{i}}
\def\k{\mathbf{k}}
\def\j{\mathbf{j}}
\def\I{\mathcal{I}}
\def\A{\mathbf{A}}
\def\B{\mathbf{B}}
\def\D{\mathcal{D}}
\def\L{\mathcal{L}}
\begin{document}
	\title[]{Kernel representation formula from complex to real Wiener-It\^o integrals and vice versa}
	\author[]{Huiping Chen, Yong Chen and Yong Liu}
	\address{Huiping Chen\\LMAM, School of Mathematical Sciences\\Peking University\\ Beijing 100871\\ China}
	\email{chenhp@pku.edu.cn}
	\thanks{We thank Prof. Xiaohong Lan for valuable comments and discussion}
    \address{Yong Chen\\ School of Mathematics and Statistics\\
    	Jiangxi Normal University\\ Nanchang, Jiangxi 330022\\ China}
    \email{zhishi@pku.org.cn; chenyong77@gmail.com}
    \thanks{ Y. Chen is supported by NSFC (No. 11961033)}
     \address{Yong Liu\\ LMAM, School of Mathematical Sciences\\Peking University\\ Beijing 100871\\ China}
    \email{liuyong@math.pku.edu.cn}
    \thanks{  Y. Liu is supported by NSFC (No. 11731009, No. 11926327) and Center for Statistical Science, PKU}

	\begin{abstract}
	We clearly characterize the relation between real and complex Wiener-It\^o integrals. Given a complex multiple Wiener-It\^o integral, we get explicit expressions for two kernels of its real and imaginary parts. Conversely, consider a two-dimensional real Wiener-It\^o integral, we obtain the representation formula by a finite sum of complex Wiener-It\^o integrals. The main tools are a recursion technique and Malliavin derivative operators. We build a bridge between real and complex Wiener-It\^o integrals.
	\end{abstract}
	\maketitle

	\noindent
	{\bf AMS 2020 Mathematics Subject Classification}: Primary 60H05; Secondary 60H07.
	
	\noindent
	{\bf Keywords and phrases}: Complex Wiener-It\^o integral,  Two-dimensional real Wiener-It\^o integral, Kernel representation formula, Generalized Stroock's formula.
	\tableofcontents	
	\section*{Notations}

\begin{center}
	\begin{tabular}{rl}
		$\mathfrak{H}$&: a real separable Hilbert space\\
		$\mathfrak{H}\oplus\mathfrak{H}$&: the Hilbert space direct sum\\
		$\mathfrak{H}_{\mathbb{C}}$&: the complexification of $\mathfrak{H}$\\
		$\mathfrak{H}^{\odot m}$, $\left( \mathfrak{H}\oplus\mathfrak{H}\right) ^{\odot m}$, $\mathfrak{H}_{\mathbb{C}}^{\odot m}$&: the $m$ times symmetric tensor product of $\mathfrak{H}$, 	$\mathfrak{H}\oplus\mathfrak{H}$, $\mathfrak{H}_{\mathbb{C}}$\\
		$X$, $Y$&: the real Gaussian isonormal process over $\mathfrak{H}$\\
		$W$&: the real Gaussian isonormal process over $\mathfrak{H}\oplus\mathfrak{H}$\\
		$X_{\mathbb{C}}$, $Y_{\mathbb{C}}$&: the complexification of $X$, $Y$\\
		$Z$&: the complex Gaussian isonormal process over $\mathfrak{H}_{\mathbb{C}}$\\
		$\mathcal{H}_n(X)$, $\mathcal{H}_n(Y)$, $\mathcal{H}_n(W)$&: the $n$-th Wiener-It\^{o} chaos of $X$, $Y$, $W$\\
		$\mathcal{H}^{\mathbb{C}}_{n}(X)$, $\mathcal{H}^{\mathbb{C}}_{n}(W)$&: the complexification of $\mathcal{H}_n(X)$, $\mathcal{H}_n(W)$\\
		$\mathscr{H}_{m,n}(Z)$&: the $(m,n)$-th complex Wiener-It\^{o} chaos of $Z$\\
		symm($f\otimes g$), $f\tilde{\otimes} g$&: symmetric tensor product of $f$ and $g$\\
		$\Lambda$&: the set of all sequences $\textbf{p}=\left\lbrace p_k \right\rbrace_{k=1}^{\infty}$ of non-negative \\&\;\, integers with a finite sum\\
		$\mathrm{Re}\,z,\mathrm{Im}\,z$&: the real and imaginary parts of a complex number $z$\\
		$\left\lbrace e_k=e^1_k+\mathrm{i}e^2_k\right\rbrace_{k\geq1} $&: complete and orthogornal elements with norm $\sqrt2$ in $\mathfrak{H}_{\mathbb{C}},$\\ &\;\, $e^1_k=\text{Re}\,e_k$, $e^2_k=\text{Im}\,e_k$\\
		$\left\lbrace u_{1,0}(k), v_{1,0}(k)\right\rbrace_{k\geq1} $&: a complete and orthonormal basis of $\mathfrak{H}\oplus\mathfrak{H}$,\\ &\;\, $u_{1,0}(k)=\frac{1}{\sqrt2}(e^1_k,-e^2_k)$, $v_{1,0}(k)=\frac{1}{\sqrt2}(e^2_k, e^1_k)$
	\end{tabular}
\end{center}

\section{Introduction}\label{Section1 Introduction}

In 1951, It\^o published the seminal article \cite{ito1951real} and defined multiple Wiener integral with respect to a normal random measure which was introduced first and termed polynomial chaos by Wiener in \cite{Wiener1938}. It\^o showed that multiple Wiener integrals of different degrees are orthogonal to each other and closely related to Hermite polynomials. By making use of the relation between multiple Wiener integrals and Hermite polynomials, It\^o developed the chaos decomposition theory which leads to an orthogonal expansion of any square integrable functional of the normal random measure. Shortly afterwards, It\^o in \cite{ito1952complex} established the theory of complex multiple Wiener-It\^o integrals with respect to a complex normal random measure in 1952. It\^o firstly defined Hermite polynomials of complex variables, also first called Hermite-Laguerre-It\^o polynomials in \cite{ChenLiu2014}, and showed the close relation between Hermite polynomials of complex variables and complex multiple Wiener-It\^o integrals. Further, It\^o utilized the results obtained to generalize and derive the spectral structure and ergodicity of the shift transformation of normal screw lines such as complex Wiener processes and complex normal stationary processes. Thereafter, there has been a recent interest in theoretical studies on complex Gaussian fields, specifically complex Wiener-It\^o integrals, and Hermite-Laguerre-It\^o polynomials. For example, in \cite{campese2015fourth,ChenHuWang2017,chen2017fourth}, the authors showed the fourth moment theorem for complex multiple Wiener-It\^o integrals. That is, a sequence of complex multiple Wiener-It\^o integrals converges in distribution to the bivariate normal distribution if and only if the absolute moment up to fourth converges. The product formula, independence and asymptotic moment-independence for complex multiple Wiener-It\^o integrals were obtained in \cite{Chen2017}. In \cite{chenliu2019}, the author got some properties of complex multiple Wiener-It\^o integrals, complex Ornstein-Uhlenbeck operators and semigroups. For Hermite-Laguerre-It\^o polynomials, detailed analytic properties, relation with real Hermite polynomials and other related researches can be found in \cite{Agorram2016,Cotfas_2010,Ismail2016}.

The researches of complex Gaussian fields are strongly motivated by a variety of applications. For instance, in probabilistic models of cosmic microwave background radiation, it is important to understand high-frequency behaviour, namely central limit theorems for the Fourier coefficients associated with complex Gaussian subordinated fields which are identically distributed to functionals of complex Wiener-It\^o integrals, see \cite{Kamionkowski_1997,marinucci_peccati_2011} for more details. The cubic complex Ginzburg-Landau equation is one of the most crucial nonlinear  partial differential equations in applied mathematics and physics which describes various physical phenomena such as nonlinear waves, second-order phase transition, superconductivity and superfluidity, see \cite{RevModPhys.74.99}. The stochastic cubic complex Ginzburg-Landau equation with complex space-time white noise on the three dimensional torus can be studied by using theories of complex Wiener-It\^o integrals, see \cite{Hoshino2017} for instance. The Ornstein-Uhlenbeck process was introduced in \cite{arato1962evaluation} to model the Chandler wobble or variation of latitude concerning with the rotation of the earth, and later has been heavily used in finance and econophysics. Statistical inference for parameter estimators of the Ornstein-Uhlenbeck process such as consistency and asymptotic normality can be obtained by using techniques of complex Wiener-It\^o integrals, see \cite{ChenHuWang2017,ShenTangYin2021} for example. Moreover, the eigenfunctions of an Ornstein-Uhlenbeck operator which are closely related to the Hermite-Laguerre-It\^o polynomials have been found to be useful for applications such as simulating rare events and approximating solutions to the Fokker-Planck equation, see \cite{ChenLiu2014,A.Tantet2020,Zhang2021,ZHANG2022}. In the field of communication and signal processing, the noise is often supposed to be complex Gaussian noise, see \cite{Aghaei2008,BARONE2005,Matalkah2008,Reisenfeld2003} for more details. 

In order to investigate problems concerned with practical models mentioned above, we have to develop theories of complex	square integrable functionals of a complex Gaussian process and deeply understand the structure of complex Wiener-It\^o integrals based on It\^o's theories in \cite{ito1952complex}. We aim to not only explore specific properties of complex Wiener-It\^o integrals but also establish the relation between real and complex Wiener-It\^o integrals to make use of abundant theories of real Wiener-It\^o integrals in \cite{Malliavin1997,NNP2013,nourdin2012normal,nualart2006malliavin,NT2017} and so on. Chen and Liu in \cite[Theorem 3.3]{chen2017fourth} showed that the real and imaginary parts of a complex $(p,q)$-th Wiener-It\^o integral can be expressed as a real Wiener-It\^{o} integral of order $p+q$, respectively. This result provides a natural and direct perspective that one-dimensional complex Wiener-It\^o integral can be regarded as a two-dimensional real Wiener-It\^o integral. In this paper, we focus on completely characterizing the expressions for kernels of one-dimensional complex and two-dimensional real Wiener-It\^o integrals and thus build a bridge between them. One the one hand, the explicit and computable representations for kernels guarantee that we can solve theoretical and practical problems concerning with complex Wiener-It\^o integrals by utilizing the theories of two-dimensional real Wiener-It\^o integrals. For example, a sufficient and necessary condition for the existence of the density of a complex Wiener-It\^o integral is derived by using \cite[Theorem 3.1]{NNP2013} or \cite[Theorem 3]{NT2017}, see Corollary \ref{simple density exist}; also, in practical models, some statistical properties of a complex Wiener-It\^o integral can be easily verified with the help of explicit expressions for kernels, see Example \ref{ex} and Example \ref{inverse ex} for instance. On the other hand, we realize that, for a complex Wiener-It\^o integral of a higher order, the representations for kernels of real and imaginary parts are rather complicated and seems difficult to directly apply to some practical models. This reminds us the necessity to further develop the theories of complex Wiener-It\^o integral itself.

\cite[Theorem 3.3]{chen2017fourth} is not convenient to use sometimes since it only shows the existences of kernels, for which expressions depend on some redundant parameters, of real and imaginary parts of a complex Wiener-It\^o integral. In Section \ref{Section3}, we show how to get explicit representations for kernels of real and complex parts of a complex multiple Wiener-It\^o integral. In Section \ref{Section3.1}, we remove those redundant parameters in the proof of \cite[Theorem 3.3]{chen2017fourth} and prove the uniqueness of the kernels of the real and imaginary parts, see Theorem \ref{First proof}. In Section \ref{Section3.2}, based on a complete orthonormal system of complex Wiener-It\^{o} chaos shown by It\^o in \cite[Theorem 14]{ito1952complex} and defined as \eqref{ONB of (p,q)chaos}, we obtain more explicit recursion formulae by an induction argument, which makes full use of recursion formulae concerning complex multiple Wiener integrals established by It\^o in \cite[Theorem 9]{ito1952complex}, see \eqref{u_recurrence}, \eqref{v_recurrence} and Theorem \ref{Thm 1}. We stress that recursion formulae actually offer an algorithm to derive the representation for kernels of real and complex parts of a complex multiple Wiener-It\^o integral of a higher order. In Section \ref{Section3.4}, we combine the Stroock's formula with the relation between the real and complex Malliavin derivative operators, and then get another computable expressions for the kernels of the real and imaginary parts of a complex multiple Wiener-It\^o integral, see Corollary \ref{1 generalized Stroock}. In a word, we refine \cite[Theorem 3.3]{chen2017fourth} and clearly characterize the kernels of the real and imaginary parts of a complex multiple Wiener-It\^o integral.

In Section \ref{Section4}, we conversely consider a complex random variable whose real and imaginary parts are two real multiple Wiener-It\^o integrals. Essentially, we show how to represent a two-dimensional real Wiener-It\^o integrals as a finite sum of one-dimensional complex Wiener-It\^o integrals. We firstly prove the uniqueness of this representation in Section \ref{Section4.1}, see Theorem \ref{inverse First proof}. In Section \ref{Section4.2}, we get computable expressions for kernels of complex Wiener-It\^o integrals by using the complex Stroock's formula and the relation between the real and complex Malliavin derivative operators, see Theorem \ref{inverse expression of kernel}. Furthermore, in Theorem \ref{2 to 1}, we derive an equivalent condition that a two-dimensional real and a one-dimensional complex Wiener-It\^o integral can be represented by each other.

The paper is organized as follows. Section \ref{Section2 Preliminaries} introduces some elements of the real and complex Gaussian isonormal process and Malliavin calculus. In Section \ref{Section3.1}, Section \ref{Section3.2} and Section \ref{Section3.4}, we clearly characterize the kernels of real and imaginary parts of a complex multiple Wiener-It\^o integral. In section \ref{Section3.3}, we revisit the classical theory of It\^o's complex multiple integrals and realize the results in Section \ref{Section3.2} to complex multiple integrals with respect to a complex Brownian motion. In Section \ref{Section4.1} and Section \ref{Section4.2}, we prove that a complex random variable, whose real and imaginary parts are two real multiple Wiener-It\^o integrals, can be uniquely expressed as a finite sum of complex Wiener-It\^o integrals and expressions for kernels of these complex Wiener-It\^o integrals are obtained. The proofs of the main theorems of this paper are presented in Section \ref{Section5}.

\section{Preliminaries}\label{Section2 Preliminaries}

In this section, we briefly introduce some basic theories of the real isonormal Gaussian process, the complex isonormal Gaussian process and Malliavin calculus. See \cite{chen2017fourth,ito1952complex,nourdin2012normal,nualart2006malliavin} for more details.
\subsection{Real isonormal Gaussian process}
Suppose that $\mathfrak{H}$ is a real separable Hilbert space with an inner product denoted by $\left\langle \cdot, \cdot\right\rangle _\mathfrak{H}$. Let $\|h\|_{\mathfrak{H}}$ denote the norm of $h\in\mathfrak{H}$. Consider an isonormal Gaussian process $X=\left\{ X(h): h\in\mathfrak{H}\right\} $ defined on a complete probability space $(\Omega, \mathcal{F}, P)$, where the $\sigma$-algebra $\mathcal{F}$ is generated by $X$. That is, $X=\left\{ X(h): h\in\mathfrak{H}\right\} $ is a Gaussian family of centered random variables such that $\mathbb{E}\left[ X(h)X(g) \right]=\left\langle h,g\right\rangle _\mathfrak{H}$ for any $h,g\in \mathfrak{H}$. 

For $n\geq0$, the $n$-th Wiener-It\^{o} chaos $\mathcal{H}_n(X)$ of $X$ is the closed linear subspace of $L^2(\Omega)$ generated by the random variables $\left\{ H_n(X(h)): h\in\mathfrak{H}, \|h\|_{\mathfrak{H}}=1\right\} $, where $H_n(x)$ is the Hermite polynomial of degree $n$ defined by the equality
\begin{equation*}
	\exp\left\lbrace tx-\frac{1}{2}t^2\right\rbrace =\sum_{n=0}^{\infty}\frac{t^n}{n!}H_n(x).
\end{equation*}
We denote by $\Lambda$ the set of all sequences $\textbf{a}=\left\lbrace a_k \right\rbrace_{k=1}^{\infty}$ of non-negative integers with a finite sum. Set $\textbf{a}!=\prod_{k=1}^{\infty}a_k!$ and $|\textbf{a}|=\sum_{k=1}^{\infty}a_k$. For $\textbf{a}=\left\lbrace a_k \right\rbrace_{k=1}^{\infty}, \textbf{b}=\left\lbrace b_k \right\rbrace_{k=1}^{\infty}\in \Lambda$, we define $\textbf{a}+\textbf{b}=\left\lbrace a_k+ b_k \right\rbrace_{k=1}^{\infty}$ and say $\textbf{a}\leq \textbf{b}$ if $a_k\leq b_k$ for all $k\geq1$. Let $\mathfrak{H}^{\otimes n}$ and $\mathfrak{H}^{\odot n}$ denote the $n$-th tensor product and the $n$-th symmetric tensor product of $\mathfrak{H}$, respectively. Let $\left\lbrace\eta_k,k\geq1 \right\rbrace $ be a complete orthonormal system in $\mathfrak{H}$. For a sequence $\textbf{m}=\left\lbrace m_k \right\rbrace_{k=1}^{\infty}\in\Lambda $, define the Fourier-Hermite polynomial as
\begin{equation*}
	\textbf{H}_{\textbf{m}}=\frac{1}{\sqrt{\textbf{m}!}}\prod_{k=1}^{\infty}H_{m_k}\left(X\left(\eta_k\right)\right).
\end{equation*}
Denote by symm($f\otimes g$) the symmetrization of $f\otimes g$, where $f,g\in\mathfrak{H}$. Let $|\textbf{m}|=n$. For $n\geq1$, the mapping 
\begin{equation}\label{def of real integral}
	I_n\left(\rm{symm}\left(\otimes_{k=1}^{\infty}\eta_k^{\otimes m_k}\right)\right):=\sqrt{\textbf{m}!}\textbf{H}_{\textbf{m}}
\end{equation}
provides a linear isometry between the symmetric tensor product $\mathfrak{H}^{\odot n}$, equipped with the norm $\sqrt{n!}\|\cdot\|_{\mathfrak{H}^{\otimes n}}$, and the $n$-th Wiener-It\^{o} chaos $\mathcal{H}_{n}(X)$. For $n=0$, $I_0$ is the identity map. Note that It\^o firstly proved \eqref{def of real integral} in \cite[Theorem 3.1]{ito1951real}. For any $f\in \mathfrak{H}^{\odot n}$, the random variable $I_n(f)$ is called the real $n$-th Wiener-It\^o integral of $f$ with respect to $X$.	Wiener-It\^o chaos decomposition of $L^2(\Omega,\sigma(X),P)$ implies that $L^{2}(\Omega)$ can be decomposed into the infinite orthogonal sum of the spaces $\mathcal{H}_{n}(X)$. That is, any random variable $F \in L^2(\Omega,\sigma(X),P)$ admits a unique expansion of the form
\begin{equation*}
	F=\sum_{n=0}^{\infty} I_{n}\left(f_{n}\right),	
\end{equation*}
where $f_{0}=\mathbb{E}[F]$, and $f_{n} \in \mathfrak{H}^{\odot n}$ with $n \geq1$ are uniquely determined by $F$.

Given $f\in\mathfrak{H}^{\odot p}$, $g\in\mathfrak{H}^{\odot q}$, for $r=0,\dots,p\land q$, the $r$-th contraction of $f$ and $g$ is an element of $\mathfrak{H}^{\otimes (p+q-2r)}$ defined by 
\begin{equation*}
	f\otimes_rg=\sum_{i_1,\ldots,i_r=1}^{\infty}\left\langle f,\eta_{i_1}\otimes\cdots\otimes \eta_{i_r}\right\rangle _{\mathfrak{H}^{\otimes r}}\otimes\left\langle g,\eta_{i_1}\otimes\cdots\otimes \eta_{i_r}\right\rangle _{\mathfrak{H}^{\otimes r}}.
\end{equation*}
Notice that $f\otimes_rg$ is not necessarily symmetric, we denote by $f\tilde{\otimes}_rg$ its symmetrization. \cite[Proposition 2.7.10]{nourdin2012normal} provides the product formula for real  multiple Wiener-It\^{o} integrals as follows. For $f\in\mathfrak{H}^{\odot p}$ and $g\in\mathfrak{H}^{\odot q}$ with $p,q\geq0$, 
\begin{equation}\label{Product_formula}
	I_p(f)I_q(g)=\sum_{r=0}^{p\land q}r!\binom{p}{r}\binom{q}{r}I_{p+q-2r}(f\tilde{\otimes}_rg).
\end{equation}

\subsection{Complex isonormal Gaussian process}
Next, we introduce the complex isonormal Gaussian process. We complexify $\mathfrak{H}$, $L^2(\Omega)$ in the usual way and denote by $\mathfrak{H}_{\mathbb{C}}$, $L^2_{\mathbb{C}}(\Omega)$ respectively. Suppose $\mathfrak{h}=f+\i g\in\mathfrak{H}_{\mathbb{C}}$ with $f,g\in\mathfrak{H}$, we write $$X_{\mathbb{C}}(\mathfrak{h}):=X(f)+\i X(g), $$ which satisfies $\mathbb{E}\left[X_{\mathbb{C}}\left(\mathfrak{h}\right)\overline{X_{\mathbb{C}}\left(\mathfrak{h}'\right)}\right]=\left\langle \mathfrak{h},\mathfrak{h}'\right\rangle_{\mathfrak{H}_{\mathbb{C}}}$ with $\mathfrak{h}'\in\mathfrak{H}_{\mathbb{C}}$. Let $Y=\left\{ Y(h): h\in\mathfrak{H}\right\} $ is an independent copy of the isonormal Gaussian process $X$ over $\mathfrak{H}$. Define $Y_{\mathbb{C}}(\mathfrak{h})$ same as above.
Let
\begin{equation}\label{complex Gaussian process}
	Z(\mathfrak{h}):=\frac{X_{\mathbb{C}}(\mathfrak{h}) + \i Y_{\mathbb{C}}(\mathfrak{h})}{\sqrt{2}},\; \mathfrak{h}\in \mathfrak{H}_{\mathbb{C}},
\end{equation}
and we call $Z=\left\{ Z(\mathfrak{h}): \mathfrak{h}\in \mathfrak{H}_{\mathbb{C}}\right\} $  a complex isonormal Gaussian process over $\mathfrak{H}_{\mathbb{C}}$, which is a centered symmetric complex Gaussian family satisfying
\begin{equation}
	\mathbb{E}[Z(\mathfrak{h})^2]=0,\quad
	\mathbb{E}[Z(\mathfrak{h})\overline{Z(\mathfrak{h}')}]=\left\langle \mathfrak{h},\mathfrak{h}' \right\rangle_{\mathfrak{H}_{\mathbb{C}}}, \; \forall \mathfrak{h},\mathfrak{h}'\in \mathfrak{H}_{\mathbb{C}}.
\end{equation}

For each $p,q\geq 0$, let $\mathscr{H}_{p,q}(Z)$ be the closed linear subspace of $L^2_{\mathbb{C}}(\Omega)$ generated by the random variables $\left\{ J_{p,q}(Z(\mathfrak{h})): \mathfrak{h}\in\mathfrak{H}_{\mathbb{C}},\|\mathfrak{h}\|_{\mathfrak{H}_{\mathbb{C}}}=\sqrt2\right\} $, where $J_{p,q}(z)$ is the complex Hermite polynomial, or Hermite-Laguerre-It\^o polynomial, given by 
\begin{equation*}
	\exp\left\{ \lambda\bar{z}+\bar{\lambda}z-2|\lambda|^2\right\} =\sum_{p=0}^{\infty}\sum_{q=0}^{\infty}\frac{\bar{\lambda}^p\lambda^q}{p!q!}J_{p,q}(z),\;\lambda\in\mathbb{C}.
\end{equation*}
The space $\mathscr{H}_{p,q}(Z)$ is called the $(p, q)$-th Wiener-It\^o chaos of $Z$.

Take a complete orthonormal system $\left\{ \xi_k, k\geq1\right\} $ in $\mathfrak{H}_{\mathbb{C}}$. For two sequences $\textbf{p}=\left\{ p_k \right\}_{k=1}^{\infty}, \textbf{q}=\left\{ q_k \right\}_{k=1}^{\infty}\in\Lambda$, define a complex Fourier-Hermite polynomial or Fourier-Hermite-Laguerre-It\^o polynomial as
\begin{equation}\label{ONB of (p,q)chaos}
	\textbf{J}_{\textbf{p},\textbf{q}}:=\prod_{k=1}^{\infty}\frac{1}{\sqrt{2^{p_k+q_k}p_k!q_k!}}J_{p_k,q_k}\left(\sqrt2Z\left({\xi}_k\right)\right).
\end{equation}
Then for any $p, q \geq0$, the random variables
\begin{equation*}
	\left\{ \textbf{J}_{\textbf{p},\textbf{q}}:|\textbf{p}|=p, |\textbf{q}|=q\right\} ,
\end{equation*}
form a complete orthonormal system in $\mathscr{H}_{p,q}(Z)$. As a consequence, the linear mapping
\begin{equation}\label{def of complex integral}
	{\I}_{p,q}\left(\rm{symm}\left(\otimes_{k=1}^{\infty}{\xi}_k^{\otimes p_k}\right)\otimes \rm{symm}\left(\otimes_{k=1}^{\infty}\overline{\xi}_k^{\otimes q_k}\right)\right):=\sqrt{\textbf{p!q!}} \textbf{J}_{\textbf{p},\textbf{q}},
\end{equation}
provides an isometry from the tensor product $\mathfrak{H}_{\mathbb{C}}^{\odot p}\otimes\mathfrak{H}_{\mathbb{C}}^{\odot q}$, equipped with the norm\\ $\sqrt{p!q!}\|\cdot\|_{\mathfrak{H}_{\mathbb{C}}^{\otimes (p+q)}}$, onto the $(p,q)$-th Wiener-It\^o chaos $\mathscr{H}_{p,q}(Z)$.  Note that \eqref{def of complex integral} was proved by It\^o in \cite[Theorem 13.2]{ito1952complex}. For any $f\in\mathfrak{H}_{\mathbb{C}}^{\odot p}\otimes\mathfrak{H}_{\mathbb{C}}^{\odot q}$, $\I_{p,q}(f)$ is called complex $(p,q)$-th Wiener-It\^o integral of $f$ with respect to $Z$. Complex Wiener-It\^o chaos decomposition of $L_{\mathbb{C}}^2(\Omega,\sigma(Z),P)$ implies that $L_{\mathbb{C}}^2(\Omega,\sigma(Z),P)$ can be decomposed into the infinite orthogonal sum of the spaces $\mathscr{H}_{p,q}(Z)$. That is, any random variable $F \in L_{\mathbb{C}}^2(\Omega,\sigma(Z),P)$ admits a unique expansion of the form
\begin{equation}\label{complex chaos decomposition}
	F=\sum_{p=0}^{\infty}\sum_{q=0}^{\infty} \I_{p,q}\left(f_{p,q}\right),	
\end{equation}
where $f_{0,0}=\mathbb{E}[F]$, and $f_{p,q} \in \mathfrak{H}_{\mathbb{C}}^{\odot p}\otimes\mathfrak{H}_{\mathbb{C}}^{\odot q}$ with $p+q \geq1$, are uniquely determined by $F$.

Given $f\in\mathfrak{H}_{\mathbb{C}}^{\odot a}\otimes\mathfrak{H}_{\mathbb{C}}^{\odot b}$, $g\in\mathfrak{H}_{\mathbb{C}}^{\odot c}\otimes\mathfrak{H}_{\mathbb{C}}^{\odot d}$, for $i=0,\dots,a\land d$, $j=0,\dots,b\land c$, the $(i,j)$-th contraction of $f$ and $g$ is an element of $\mathfrak{H}_{\mathbb{C}}^{\odot (a+c-i-j)}\otimes\mathfrak{H}_{\mathbb{C}}^{\odot (b+d-i-j)}$ defined by
\begin{align}
	f \otimes_{i, j} g
	=& \sum_{l_{1}, \ldots, l_{i+j}=1}^{\infty}\left\langle f, \xi_{l_{1}} \otimes \cdots \otimes \xi_{l_{i}} \otimes \bar{\xi}_{l_{i+1}} \otimes \cdots \otimes \bar{\xi}_{l_{i+j}}\right\rangle\\ &\qquad\qquad\otimes\left\langle g, \xi_{l_{i+1}} \otimes \cdots \otimes \xi_{l_{i+j}} \otimes \bar{\xi}_{l_{1}} \otimes \cdots \otimes \bar{\xi}_{l_{i}}\right\rangle,
\end{align}
and by convention, $f \otimes_{0,0} g=f \otimes g$ denotes the tensor product of $f$ and $g$. \cite[Theorem 2.1]{Chen2017} and  \cite[Theorem A.1]{Hoshino2017} establish the product formula for complex Wiener-It\^o integrals. For $f \in \mathfrak{H}^{\odot a} \otimes\mathfrak{H}^{\odot b}$ and $ g \in \mathfrak{H}^{\odot c} \otimes \mathfrak{H}^{\odot d}$ with $a, b, c, d \geq0$,
\begin{equation}\label{complex product}
	\I_{a, b}(f) \I_{c, d}(g)=\sum_{i=0}^{a \wedge d} \sum_{j=0}^{b \wedge c}\binom{a}{i} \binom{d}{i}\binom{b}{j}\binom{c}{j} i! j! \I_{a+c-i-j, b+d-i-j}\left(f \otimes_{i, j} g\right).
\end{equation}

Here, we introduce some properties of real and complex polynomials that will be used in Section \ref{Section3.1}. Let $z=x+\i y$ with $x,y\in\mathbb{R}$. \cite[Corollary 2.8]{ChenLiu2014} shows that the real and complex Hermite polynomials satisfy 
\begin{equation}\label{real and complex Hermite}
	J_{p, q}(z)=\sum_{j=0}^{p+q} \mathrm{i}^{p+q-j} \sum_{r+s=j}\binom{p}{r}\binom{q}{s}(-1)^{q-s}H_j(x)H_{p+q-j}(y),
\end{equation}
and 
\begin{equation}\label{inverse real and complex Hermite}
	H_m(x)H_{n}(y)=\frac{\i ^n}{2^{m+n}}\sum_{j=0}^{m+n}\sum_{r+s=j}\binom{m}{r}\binom{n}{s}(-1)^sJ_{j, m+n-j}\left(z \right).
\end{equation}
According to \cite[Proposition 3.11]{Hu2017} or \cite{Stroock1983}, for $\theta\in[0,2\pi)$, the real Hermite polynomials satisfy the invariant property
\begin{equation}\label{Hermite invariant}
	H_{n}\left(x \cos\theta+y\sin\theta\right)=\sum_{j=0}^{n}\binom{n}{j}\left( \cos\theta\right)^j \left(\sin\theta \right)^{n-j} H_j(x)H_{n-j}(y).
\end{equation}

\subsection{Malliavin derivative operators}\label{Section2.3}
Finally, we show some fundamental elements of Malliavin calculus. Let $\mathcal{S}$ denote the class of smooth random variables of the form $F=f(X(h_1),\dots,X(h_n))$, where $h_1,\dots,h_n\in\mathfrak{H}$, $n\geq1$ and $f\in C_p^{\infty}(\mathbb{R}^n)$, the set of all infinitely continuously differentiable real-valued functions such that all its partial derivatives have polynomial growth. Given $F\in \mathcal{S}$, the Malliavin derivative $DF$ is a $\mathfrak{H}$-valued random element given by 
\begin{equation*}
	DF=\sum_{i=1}^{n}\frac{\partial f}{\partial x_i}\left(X\left(h_1\right),\dots,X\left(h_n\right)\right)h_i.
\end{equation*}
The derivative operator $D$ is a closable and unbounded operator from $L^p(\Omega)$ to $L^p(\Omega;\mathfrak{H})$ for any $p\geq1$. By iteration, for $k\geq2$, one can define $k$-th derivative $D^kF\in L^p(\Omega;\mathfrak{H}^{\otimes k})$. For any $p\geq1$ and $k\geq0$, let $\mathbb{D}^{k,p}$ denote the closure of $\mathcal{S}$ with respect to the norm $\|\cdot\|_{k,p}$ given by 
\begin{equation*}
	\|F\|_{k,p}^{p}=\sum_{i=0}^{k}\mathbb{E}\left(\left\|D^iF\right\|^{p}_{\mathfrak{H}^{\otimes i}}\right).
\end{equation*} 
For any $p\geq1$ and $k\geq0$, we set $\mathbb{D}^{\infty,p}=\bigcap_{k\geq0}\mathbb{D}^{k,p}$, $\mathbb{D}^{k,\infty}=\bigcap_{p\geq1}\mathbb{D}^{k,p}$ and $\mathbb{D}^{\infty}=\bigcap_{k\geq0}\mathbb{D}^{k,\infty}$. If $F=I_p(f)$ with $f\in\mathfrak{H}^{\odot p}$, then $I_p(f)\in\mathbb{D}^{\infty}$ and for any $k\geq0$,
\begin{equation*}
	D^kI_p(f) =
	\begin{cases}
		\frac{p!}{(p-k)!}I_{p-k}(f),  & {k\leq p,}\\
		0,  & {k>p.}
	\end{cases}
\end{equation*} 	
In \cite{Stroock1987}, the Stroock's formula was established that for any random variable $F \in L^2(\Omega,\sigma(X),P)$ expanded as $	F=\mathbb{E}[F]+\sum_{n=1}^{\infty} I_{n}\left(f_{n}\right)$, if $F\in \mathbb{D}^{n,2} $ for some $n\geq1$, then 
\begin{equation}\label{real Stroock}
	f_p=\frac{1}{p!}\mathbb{E}\left[ D^pF\right] 
\end{equation}
for all $p\leq n$.

In \cite{chenliu2019}, Chen and Liu defined the complex Malliavin derivative operators $\D$ and $\bar{\D}$ as follows. Let $\mathcal{S}_Z$ denote the set of all smooth random variables of the form
\begin{equation}\label{complex smooth r.v.}
	G=g\left(Z\left(\mathfrak{h}_{1}\right), \cdots, Z\left(\mathfrak{h}_{m}\right)\right),
\end{equation}
where $\mathfrak{h}_1,\dots,\mathfrak{h}_m\in\mathfrak{H}_{\mathbb{C}}$, $m\geq1$ and $g\in C_p^{\infty}(\mathbb{C}^m)$, the set of all infinitely continuously differentiable complex-valued functions such that all its partial derivatives have polynomial growth.
If $G \in \mathcal{S}_Z$ with the form \eqref{complex smooth r.v.}, then the complex Malliavin derivatives of $G$ are the elements of $L_{\mathbb{C}}^{2}(\Omega; \mathfrak{H}_{\mathbb{C}})$ defined by
\begin{align}\label{definition of complex derivative}
	\D G &=\sum_{j=1}^{m} \partial_{j} g\left(Z\left(\mathfrak{h}_{1}\right), \cdots, Z\left(\mathfrak{h}_{m}\right)\right) \mathfrak{h}_{j}, \\
	\bar{\D} G &=\sum_{j=1}^{m} \bar{\partial}_{j} g\left(Z\left(\mathfrak{h}_{1}\right), \cdots, Z\left(\mathfrak{h}_{m}\right)\right) \overline{\mathfrak{h}}_{j},
\end{align}
where for complex numbers $z_j=x_j+\i y_j$ with $x_j,y_j \in\mathbb{R}$ and $j=1,\ldots,m$,
\begin{align}
	\partial_{j} g&=\frac{\partial}{\partial z_{j}} g\left(z_{1}, \ldots, z_{m}\right)=\frac{1}{ 2}\left( \frac{\partial}{\partial x_j}-\i \frac{\partial}{\partial y_j}\right)g\left(x_{1}, y_1,\ldots, x_{m}, y_m\right),\\
	\bar{\partial}_{j} g&=\frac{\partial}{\partial \bar{z}_{j}} g\left(z_{1}, \ldots, z_{m}\right)=\frac{1}{ 2}\left( \frac{\partial}{\partial x_j}+\i \frac{\partial}{\partial y_j}\right)g\left(x_{1}, y_1,\ldots, x_{m}, y_m\right),
\end{align}
are the Wirtinger derivatives.
One can define the iteration of the operator $\D$ and $\bar{\D}$ in such a way that $\D^{p} \bar{\D}^{q} G$ is a random variable with values in $\mathfrak{H}_{\mathbb{C}}^{\odot p} \otimes \mathfrak{H}_{\mathbb{C}}^{\odot q}$ for any $G \in \mathcal{S}_Z$. For $p+q\geq1$, $\D^{p} \bar{\D}^{q}$ are closable from $L_{\mathbb{C}}^{r}(\Omega)$ to $L_{\mathbb{C}}^{r}\left(\Omega, \mathfrak{H}_{\mathbb{C}}^{\odot p} \otimes \mathfrak{H}_{\mathbb{C}}^{\odot q}\right)$ for every $r \geq 1$. Denote by $\mathscr{D}^{p, r} \cap\bar{\mathscr{D}}^{q, r}$ the closure of $\mathcal{S}_Z$ with respect to the Sobolev seminorm $\|\cdot\|_{p, q, r}$ given by 
\begin{equation*}
	\|G\|_{p,q,r}^{r}=\sum_{i=0}^{p}\sum_{j=0}^{q}\mathbb{E}\left(\left\|\D^i\bar{\D}^jG\right\|^{r}_{\mathfrak{H}_{\mathbb{C}}^{\otimes (i+j)}}\right).
\end{equation*}
In \cite[Theorem 2.8]{chenliu2019}, the complex Stroock's formula was showed that for every random variable $G\in L_{\mathbb{C}}^{2}(\Omega, \sigma(Z), P)$ expressed as $G=\sum_{p=0}^{\infty} \sum_{q=0}^{\infty} I_{p, q}\left(g_{p, q}\right)$, with $g_{p, q} \in \mathfrak{H}_{\mathbb{C}}^{\odot p} \otimes \mathfrak{H}_{\mathbb{C}}^{\odot q},
$
if $G \in \mathscr{D}^{m, 2} \cap \bar{\mathscr{D}}^{n, 2}$, then
\begin{equation}\label{complex stroock}
	g_{p, q}=\frac{1}{p ! q !} \mathbb{E}\left[\D^{p} \bar{\D}^{q} G\right], \quad \forall \;p \leq m, q \leq n .
\end{equation}

\section{Kernel representation formula from complex to real Wiener-It\^o integrals}\label{Section3}

We firstly define the real isonormal Gaussian process $W$ over $\mathfrak{H}\oplus\mathfrak{H}$. Let $h, f\in\mathfrak{H}$, denote by $(h,f)\in\mathfrak{H}\oplus\mathfrak{H}$ the Cartesian product of $\mathfrak{H}$ and $\mathfrak{H}$. With respect to the natural inner product, that is, for any $h_1, h_2, f_1, f_2\in\mathfrak{H}$,
\begin{equation*}
	\left\langle (h_1, f_1),(h_2, f_2)\right\rangle_{\mathfrak{H}\oplus\mathfrak{H}}=\left\langle h_1,h_2\right\rangle _{\mathfrak{H}}+\left\langle f_1,f_2\right\rangle _{\mathfrak{H}},
\end{equation*}
$\mathfrak{H}\oplus\mathfrak{H}$ is a Hilbert space. \cite[Proposition 4.2]{chen2017fourth} offers a realization of the isonormal Gaussian process $W$ over the Hilbert space $\mathfrak{H}\oplus\mathfrak{H}$ as
\begin{equation}\label{Gaussian W}
	W = \left\{ W(h, f) = X(h) + Y (f): h, f \in\mathfrak{H}\right\},
\end{equation} 
where $X$ and $Y$ are two real independent identically distributed isonormal Gaussian processes over $\mathfrak{H}$. We denote $\mathcal{H}_n(W)$ as the $n$-th Wiener-It\^o chaos of $W$.

The following lemma shows that we can obtain a complete orthonormal basis of $\mathfrak{H}\oplus\mathfrak{H}$ by using the complete and orthogonal elements $\left\{ e_k\right\}_{k\geq1} $ in $\mathfrak{H}_{\mathbb{C}}$ with $\left\|e_k \right\|_{\mathfrak{H}_{\mathbb{C}}} =\sqrt2$.
\begin{Lemma}\label{lemma_basis}
	Let $\left\{ e_k=e_k^1+\mathrm{i}e_k^2\right\}_{k\geq1} $ be complete and orthogonal in $\mathfrak{H}_{\mathbb{C}}$. Suppose $\left\|e_k \right\|^2_{\mathfrak{H}_{\mathbb{C}}}=\left\|e_k^1 \right\|^2_{\mathfrak{H}}+\left\|e_k^2 \right\|^2_{\mathfrak{H}}=2$. Define $u_{1,0}(k)=\frac{1}{\sqrt2}\left(e_k^1,-e_k^2\right)$ and $v_{1,0}(k)=\frac{1}{\sqrt2}\left(e_k^2,e_k^1\right)$. Then $\left\{ u_{1,0}(k),v_{1,0}(k)\right\}_{k\geq1} $ is a complete orthonormal basis of $\mathfrak{H}\oplus\mathfrak{H}$.
\end{Lemma}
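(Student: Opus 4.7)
\medskip

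\noindent\textbf{Proof proposal.} The plan is to verify orthonormality by a direct computation using the decomposition of the Hermitian inner product on $\mathfrak{H}_{\mathbb{C}}$ into real and imaginary parts, and then obtain completeness by reducing orthogonality in $\mathfrak{H}\oplus\mathfrak{H}$ to an orthogonality statement in $\mathfrak{H}_{\mathbb{C}}$ against $\{e_k\}$.

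First I would unpack the hypothesis. For $e_k=e_k^1+\i e_k^2$ and $e_l=e_l^1+\i e_l^2$, the standard complexified inner product gives
\begin{equation*}
\langle e_k,e_l\rangle_{\mathfrak{H}_{\mathbb{C}}}
=\langle e_k^1,e_l^1\rangle_{\mathfrak{H}}+\langle e_k^2,e_l^2\rangle_{\mathfrak{H}}
+\i\bigl(\langle e_k^2,e_l^1\rangle_{\mathfrak{H}}-\langle e_k^1,e_l^2\rangle_{\mathfrak{H}}\bigr).
\end{equation*}
The orthogonality $\langle e_k,e_l\rangle_{\mathfrak{H}_{\mathbb{C}}}=2\delta_{kl}$ (with the normalization $\|e_k\|^2_{\mathfrak{H}_{\mathbb{C}}}=2$) therefore splits into two real identities: $\langle e_k^1,e_l^1\rangle_{\mathfrak{H}}+\langle e_k^2,e_l^2\rangle_{\mathfrak{H}}=2\delta_{kl}$ and $\langle e_k^2,e_l^1\rangle_{\mathfrak{H}}=\langle e_k^1,e_l^2\rangle_{\mathfrak{H}}$.

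Next I would plug the definitions of $u_{1,0}(k)$ and $v_{1,0}(k)$ into the direct-sum inner product $\langle(h_1,f_1),(h_2,f_2)\rangle_{\mathfrak{H}\oplus\mathfrak{H}}=\langle h_1,h_2\rangle_{\mathfrak{H}}+\langle f_1,f_2\rangle_{\mathfrak{H}}$. A short computation shows
\begin{equation*}
\langle u_{1,0}(k),u_{1,0}(l)\rangle_{\mathfrak{H}\oplus\mathfrak{H}}
=\langle v_{1,0}(k),v_{1,0}(l)\rangle_{\mathfrak{H}\oplus\mathfrak{H}}
=\tfrac12\bigl(\langle e_k^1,e_l^1\rangle+\langle e_k^2,e_l^2\rangle\bigr)=\delta_{kl},
\end{equation*}
and
\begin{equation*}
\langle u_{1,0}(k),v_{1,0}(l)\rangle_{\mathfrak{H}\oplus\mathfrak{H}}
=\tfrac12\bigl(\langle e_k^1,e_l^2\rangle-\langle e_k^2,e_l^1\rangle\bigr)=0,
\end{equation*}
using the two real identities above. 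This settles orthonormality.

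For completeness I would argue by orthogonal complements. Take $(h,f)\in\mathfrak{H}\oplus\mathfrak{H}$ orthogonal to every $u_{1,0}(k)$ and $v_{1,0}(k)$; this is equivalent to $\langle h,e_k^1\rangle_{\mathfrak{H}}-\langle f,e_k^2\rangle_{\mathfrak{H}}=0$ and $\langle h,e_k^2\rangle_{\mathfrak{H}}+\langle f,e_k^1\rangle_{\mathfrak{H}}=0$ for every $k\geq1$. I would then form the complex element $\mathfrak{h}:=h-\i f\in\mathfrak{H}_{\mathbb{C}}$ and expand $\langle\mathfrak{h},e_k\rangle_{\mathfrak{H}_{\mathbb{C}}}$ using the same real/imaginary decomposition; the two scalar equations are precisely the vanishing of the real and imaginary parts of $\langle\mathfrak{h},e_k\rangle_{\mathfrak{H}_{\mathbb{C}}}$. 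Completeness of $\{e_k\}_{k\geq1}$ in $\mathfrak{H}_{\mathbb{C}}$ then forces $\mathfrak{h}=0$, i.e.\ $h=f=0$. The only delicate point—and the single place where care is needed—is choosing the correct sign in the passage $(h,f)\mapsto h-\i f$ so that the two real orthogonality conditions line up with the real and imaginary parts of the complex inner product; everything else is bookkeeping.
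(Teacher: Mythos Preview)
Your proof is correct and follows essentially the same route as the paper: both unpack $\langle e_k,e_l\rangle_{\mathfrak{H}_{\mathbb{C}}}=2\delta_{kl}$ into its real and imaginary parts, compute the three inner products in $\mathfrak{H}\oplus\mathfrak{H}$ directly, and then prove completeness by forming a complex vector from $(h,f)$ and invoking completeness of $\{e_k\}$ in $\mathfrak{H}_{\mathbb{C}}$. The only cosmetic difference is that the paper uses $\tilde w=w_2+\i w_1$ where you use $\mathfrak{h}=h-\i f$; since $w_2+\i w_1=\i(w_1-\i w_2)$, these differ by a unit scalar and lead to the same conclusion.
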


The proof of Lemma \ref{lemma_basis} is presented in Section \ref{Section5.1}.

\begin{Rem}\label{expand z}
	Actually, the definition of complex isonormal Gaussian process $Z$ over $\mathfrak{H}_{\mathbb{C}}$, see \eqref{complex Gaussian process}, leads us to construct the basis of $\mathfrak{H}\oplus\mathfrak{H}$ as above in Lemma \ref{lemma_basis}. Specifically, calculating $Z(e_k)$ directly, we get
	\begin{align}
		Z(e_k)&=\frac{X_{\mathbb{C}}(e_k)+\mathrm{i}Y_{\mathbb{C}}(e_k)}{\sqrt{2}}
		=\frac{X(e_k^1)+\mathrm{i}X(e_k^2)+\mathrm{i}\left(Y(e_k^1)+\mathrm{i}Y(e_k^2)\right)}{\sqrt{2}}
		\\&=\frac{1}{\sqrt2}[X\left(e_k^1\right)-Y(e_k^2)]+\frac{\mathrm{i}}{\sqrt2}[X(e_k^2)+Y(e_k^1)]
		\\&=W\left(\frac{1}{\sqrt2}(e_k^1,-e_k^2)\right)+\i W\left(\frac{1}{\sqrt2}(e_k^2,e_k^1)\right)
		\\&=I_1\left(\frac{1}{\sqrt2}(e_k^1,-e_k^2)\right)+\i I_1\left(\frac{1}{\sqrt2}(e_k^2,e_k^1)\right)
		\\&=I_1\left(u_{1,0}(k)\right) +\i I_1\left(v_{1,0}(k)\right),
	\end{align}
	where $I_{1}(\cdot)$ is the real $1$-th  Wiener-It\^{o} integral with respect to $W$. Then,
	\begin{equation}\label{I_10}
		\I_{1,0}(e_k)=J_{1,0}\left(Z(e_k)\right)
		=Z(e_k)=I_1\left(u_{1,0}(k)\right) +\i I_1\left(v_{1,0}(k)\right).
	\end{equation}
	This is why we construct $\left\{ u_{1,0}(k),v_{1,0}(k)\right\}_{k\geq1} $ as the orthonormal basis of $\mathfrak{H}\oplus\mathfrak{H}$.
\end{Rem}

\subsection{Uniqueness theorem}\label{Section3.1} 

Let	$\textbf{p}=\left\lbrace p_k \right\rbrace_{k=1}^{\infty}$, $\textbf{q}=\left\lbrace q_k\right\rbrace_{k=1}^{\infty}\in\Lambda$ with $|\textbf{p}|=p$ and $|\textbf{q}|=q$, respectively. For simplicity of presentation, define $$a_{k,j}:=\mathrm{i}^{p_k+q_k-j} \sum_{r+s=j}\binom{p_k}{r}\binom{q_k}{s}(-1)^{q_k-s}.$$
For $\textbf{j}=\left\lbrace j_k \right\rbrace_{k=1}^{\infty}\in \Lambda$ with $\textbf{j}\leq \textbf{p}+\textbf{q}$, $\prod_{k=1}^{\infty}a_{k,j_k}$ is a complex number. Define $u(\textbf{p},\textbf{q}), v(\textbf{p},\textbf{q})\in(\mathfrak{H}\oplus\mathfrak{H})^{\odot (p+q)} $ as 
\begin{align}
	u(\textbf{p},\textbf{q})&=\sum_{\textbf{j}\leq \textbf{p}+\textbf{q}} \mathrm{Re}\left( \prod_{l=1}^{\infty}a_{l,j_l}\right) \mathrm{symm}\left( \otimes_{k=1}^{\infty} \left( u_{1,0}(k)^{\otimes j_k}\otimes v_{1,0}(k)^{\otimes (p_k+q_k-j_k)} \right)\right) ,\\
	v(\textbf{p},\textbf{q})&=\sum_{\textbf{j}\leq \textbf{p}+\textbf{q}}\mathrm{Im}\left( \prod_{l=1}^{\infty}a_{l,j_l}\right)\mathrm{symm}\left( \otimes_{k=1}^{\infty} \left( u_{1,0}(k)^{\otimes j_k}\otimes v_{1,0}(k)^{\otimes (p_k+q_k-j_k)} \right)\right) .
\end{align}

Let $\left\lbrace e_k\right\rbrace_{k\geq1}$ be as in the statement of Lemma \ref{lemma_basis}, that is, $\left\{ e_k\right\}_{k\geq1} $ are complete and orthogonal with $\left\|e_k \right\|_{\mathfrak{H}_{\mathbb{C}}}=\sqrt2$ in $\mathfrak{H}_{\mathbb{C}}$. For $f\in\mathfrak{H}_{\mathbb{C}}^{\odot p}\otimes\mathfrak{H}_{\mathbb{C}}^{\odot q}$, there exists a unique sequence $$\left\lbrace c(\textbf{p};\textbf{q}):\textbf{p},\textbf{q}\in\Lambda,|\textbf{p}|=p, |\textbf{q}|=q \right\rbrace \subseteq\mathbb{C}
$$ satisfying $\sum_{\textbf{p},\textbf{q}\in\Lambda,|\textbf{p}|=p, |\textbf{q}|=q}\left| c(\textbf{p};\textbf{q})\right| ^2<\infty$ such that 
\begin{equation}\label{another expansion}
	f=\sum_{\textbf{p},\textbf{q}\in\Lambda,|\textbf{p}|=p, |\textbf{q}|=q} c(\textbf{p};\textbf{q})\mathrm{symm}\left( \otimes_{k=1}^{\infty}e_k^{\otimes p_k}\right) \otimes  \mathrm{symm}\left( \otimes_{k=1}^{\infty}\overline{e}_k^{\otimes q_k}\right).
\end{equation}
Now, we state the uniqueness theorem as follows.
\begin{Thm}[Uniqueness Theorem]\label{First proof}
	Suppose $f\in\mathfrak{H}_{\mathbb{C}}^{\odot p}\otimes\mathfrak{H}_{\mathbb{C}}^{\odot q}$ with the expansion given by \eqref{another expansion}.	Then $\I_{p,q}(f)$ admits the unique representation
	\begin{align}
		\I_{p,q}(f)
		&=\sum_{\textbf{p},\textbf{q}\in\Lambda,|\textbf{p}|=p, |\textbf{q}|=q}  I_{p+q}\left(\mathrm{Re}\left( c(\textbf{p};\textbf{q})\right) u(\textbf{p},\textbf{q})-\mathrm{Im}\left( c(\textbf{p};\textbf{q})\right) v(\textbf{p},\textbf{q})\right)\\& \quad+\i \sum_{\textbf{p},\textbf{q}\in\Lambda,|\textbf{p}|=p, |\textbf{q}|=q} I_{p+q}\left( \mathrm{Im}\left( c(\textbf{p};\textbf{q})\right) u(\textbf{p},\textbf{q})+\mathrm{Re}\left( c(\textbf{p};\textbf{q})\right) v(\textbf{p},\textbf{q})\right) ,
	\end{align} 
	where $I_{p+q}(\cdot)$ is the real $(p+q)$-th Wiener-It\^{o} integral with respect to $W$.  
\end{Thm}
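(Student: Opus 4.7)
The plan is to reduce to a single basis tensor via linearity of $\I_{p,q}$, then compute directly. Fix $\textbf{p},\textbf{q}\in\Lambda$ with $|\textbf{p}|=p$, $|\textbf{q}|=q$, and consider
\[
f_{\textbf{p},\textbf{q}} := \mathrm{symm}\bigl(\otimes_{k=1}^{\infty} e_k^{\otimes p_k}\bigr) \otimes \mathrm{symm}\bigl(\otimes_{k=1}^{\infty} \bar e_k^{\otimes q_k}\bigr).
\]
Setting $\xi_k = e_k/\sqrt{2}$ gives an orthonormal system, and $f_{\textbf{p},\textbf{q}} = (\sqrt{2})^{p+q}\,\mathrm{symm}(\otimes\xi_k^{\otimes p_k})\otimes\mathrm{symm}(\otimes\bar\xi_k^{\otimes q_k})$. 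Applying the isometry \eqref{def of complex integral} together with the definition \eqref{ONB of (p,q)chaos} and the identity $\sqrt{2}\,Z(\xi_k)=Z(e_k)$, all powers of $2$ and factorials cancel and I expect to obtain the clean formula
\[
\I_{p,q}(f_{\textbf{p},\textbf{q}}) = \prod_{k=1}^{\infty} J_{p_k,q_k}(Z(e_k)).
\]

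Next I invoke Remark \ref{expand z}: $Z(e_k) = I_1(u_{1,0}(k)) + \i I_1(v_{1,0}(k))$, so $X_k := I_1(u_{1,0}(k))$ and $Y_k := I_1(v_{1,0}(k))$ are independent standard real Gaussians. Applying \eqref{real and complex Hermite} factor by factor,
\[
J_{p_k,q_k}(Z(e_k)) = \sum_{j_k=0}^{p_k+q_k} a_{k,j_k}\, H_{j_k}(X_k)\, H_{p_k+q_k-j_k}(Y_k),
\]
and expanding the infinite product (which collapses to a finite one since only finitely many $p_k+q_k$ are nonzero),
\[
\prod_{k}J_{p_k,q_k}(Z(e_k)) = \sum_{\textbf{j}\leq\textbf{p}+\textbf{q}}\Bigl(\prod_{k}a_{k,j_k}\Bigr)\prod_{k}H_{j_k}(X_k)H_{p_k+q_k-j_k}(Y_k).
\]
Because $\{u_{1,0}(k),v_{1,0}(k)\}_{k\geq 1}$ is a complete orthonormal basis of $\mathfrak{H}\oplus\mathfrak{H}$ (Lemma \ref{lemma_basis}), each product $\prod_{k}H_{j_k}(X_k)H_{p_k+q_k-j_k}(Y_k)$ is, up to the normalization $\sqrt{\prod_k j_k!(p_k+q_k-j_k)!}$, a Fourier-Hermite polynomial $\textbf{H}_{\textbf{m}}$ of total degree $p+q$; by \eqref{def of real integral} it equals the real Wiener-Itô integral $I_{p+q}\bigl(\mathrm{symm}(\otimes_k u_{1,0}(k)^{\otimes j_k}\otimes v_{1,0}(k)^{\otimes(p_k+q_k-j_k)})\bigr)$.

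Substituting back, multiplying by $c(\textbf{p};\textbf{q}) = \mathrm{Re}\,c + \i\,\mathrm{Im}\,c$, and separating real and imaginary parts of both $c(\textbf{p};\textbf{q})$ and $\prod_k a_{k,j_k}$ via linearity of $I_{p+q}$ produces exactly $I_{p+q}(\mathrm{Re}(c)u(\textbf{p},\textbf{q})-\mathrm{Im}(c)v(\textbf{p},\textbf{q})) + \i I_{p+q}(\mathrm{Im}(c)u(\textbf{p},\textbf{q})+\mathrm{Re}(c)v(\textbf{p},\textbf{q}))$. Summing over $(\textbf{p},\textbf{q})$ with $|\textbf{p}|=p$, $|\textbf{q}|=q$ yields the claimed identity; the $L^2$ convergence of the sum follows from the $\ell^2$ summability of $\{c(\textbf{p};\textbf{q})\}$ combined with the $L^2$-isometry properties of $\I_{p,q}$ and $I_{p+q}$.

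For uniqueness: the real and imaginary parts of $\I_{p,q}(f)$ lie in $\mathcal{H}_{p+q}(W)$, and the Wiener-Itô isometry \eqref{def of real integral} shows that the kernel of any element of $\mathcal{H}_{p+q}(W)$ is unique in $(\mathfrak{H}\oplus\mathfrak{H})^{\odot(p+q)}$; combined with uniqueness of the expansion coefficients $c(\textbf{p};\textbf{q})$ in \eqref{another expansion}, this determines $u(\textbf{p},\textbf{q})$ and $v(\textbf{p},\textbf{q})$ uniquely. The main obstacle I anticipate is the careful bookkeeping of the three simultaneous normalization factors ($\sqrt{2^{p_k+q_k}}$ from \eqref{ONB of (p,q)chaos}, $\sqrt{p_k!q_k!}$ from \eqref{def of complex integral}, and $\sqrt{\textbf{m}!}$ from \eqref{def of real integral}) to verify they conspire to cancel, leaving the unadorned product $\prod_k J_{p_k,q_k}(Z(e_k))$; once this is established the rest is bookkeeping via \eqref{real and complex Hermite}.
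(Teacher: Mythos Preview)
Your proposal is correct and follows essentially the same route as the paper: the paper first establishes the factorization $\I_{p,q}(f_{\textbf{p},\textbf{q}})=\prod_k J_{p_k,q_k}(Z(e_k))$ (splitting this into Lemma~\ref{lemma repre for sub basis} for a single $k$ and Proposition~\ref{Prop repre for basis} for the product), then applies \eqref{real and complex Hermite} factor by factor and converts the resulting Hermite products into $I_{p+q}$ via \eqref{def of real integral}, exactly as you do. The only difference is organizational---you merge the lemma and proposition into a single computation---and your explicit remark on the cancellation of the $\sqrt{2}$ and factorial normalizations is a point the paper leaves implicit.
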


Note that, 
\begin{align}
	&\left\lbrace 2^{-\frac{p+q}{2}}e_k^{\otimes p}\otimes\overline{e}_k^{\otimes q}:k\geq 1\right\rbrace\\\subseteq& \left\lbrace \frac{\left( \otimes_{k=1}^{\infty}e_k^{\otimes p_k}\right)  \otimes   \left( \otimes_{k=1}^{\infty}\overline{e}_k^{\otimes q_k}\right)}{2^{\frac{p+q}{2}}}:  \textbf{p}=\left\lbrace p_k \right\rbrace_{k=1}^{\infty},\textbf{q}=\left\lbrace q_k \right\rbrace_{k=1}^{\infty}\in\Lambda, |\textbf{p}|=p,|\textbf{q}|=q\right\rbrace , 
\end{align}
and the latter is a complete orthonormal basis of $\mathfrak{H}_{\mathbb{C}}^{\odot p}\otimes\mathfrak{H}_{\mathbb{C}}^{\odot q}$. Therefore, the proof of uniqueness theorem (Theorem \ref{First proof}) is divided into following three steps:
\begin{itemize}
	\item Unique representation for $\I_{p, q}\left(e_k^{\otimes p}\otimes\overline{e}_k^{\otimes q}\right) $, namely, Lemma \ref{lemma repre for sub basis}.
	\item Unique representation for $	\I_{p,q}\left(\mathrm{symm}\left( \otimes_{k=1}^{\infty}e_k^{\otimes p_k}\right) \otimes  \mathrm{symm}\left( \otimes_{k=1}^{\infty}\overline{e}_k^{\otimes q_k}\right)\right) $, namely, Proposition \ref{Prop repre for basis}.
	\item Unique representation for $	\I_{p,q}\left(f\right)$ with $f\in\mathfrak{H}_{\mathbb{C}}^{\odot p}\otimes\mathfrak{H}_{\mathbb{C}}^{\odot q}$.
\end{itemize}

\begin{Lemma}\label{lemma repre for sub basis}
	For $ e_1=e_1^1+\mathrm{i}e_1^2\in \mathfrak{H}_{\mathbb{C}}$ with $\left\|e_1 \right\|_{\mathfrak{H}_{\mathbb{C}}}=\sqrt2$,
	\begin{align}\label{repre for sub basis}
		&\I_{p, q}\left(e_1^{\otimes p}\otimes\overline{e}_1^{\otimes q}\right)
		\\=&\sum_{j=0}^{p+q} \mathrm{i}^{p+q-j} \sum_{r+s=j}\binom{p}{r}\binom{q}{s}(-1)^{q-s}I_{p+q}\left( u_{1,0}(1)^{\otimes j}\otimes v_{1,0}(1)^{\otimes (p+q-j)}\right),
	\end{align}
	where $$u_{1,0}(1)=\frac{1}{\sqrt2}\left(e_1^1,-e_1^2\right), \quad v_{1,0}(1)=\frac{1}{\sqrt2}\left(e_1^2,e_1^1\right),$$ and $I_{p+q}(\cdot)$ is the real  $(p+q)$-th Wiener-It\^{o} integral with respect to $W$. 
\end{Lemma}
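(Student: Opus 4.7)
\textbf{Proof plan for Lemma \ref{lemma repre for sub basis}.} The plan is to reduce the complex Wiener-It\^o integral on the left-hand side to a complex Hermite polynomial evaluated at $Z(e_1)$, then apply the explicit complex/real Hermite identity \eqref{real and complex Hermite}, and finally recognize each product of real Hermite polynomials as a real Wiener-It\^o integral with respect to $W$.

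First, I would introduce the normalized vector $\xi_1 := e_1/\sqrt{2}\in\mathfrak{H}_{\mathbb{C}}$, which has unit norm and can therefore be taken as the first element of a complete orthonormal system in $\mathfrak{H}_{\mathbb{C}}$. Applying \eqref{def of complex integral} to the sequences with $p_1=p$, $q_1=q$ and all other entries zero gives
\begin{equation*}
\I_{p,q}\bigl(\xi_1^{\otimes p}\otimes \overline{\xi}_1^{\otimes q}\bigr)=\sqrt{p!q!}\cdot\frac{1}{\sqrt{2^{p+q}p!q!}}\,J_{p,q}\bigl(\sqrt{2}\,Z(\xi_1)\bigr)=\frac{1}{\sqrt{2^{p+q}}}\,J_{p,q}\bigl(Z(e_1)\bigr),
\end{equation*}
since $\sqrt{2}\,Z(\xi_1)=Z(\sqrt{2}\xi_1)=Z(e_1)$. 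Using multilinearity of $\I_{p,q}$ and $e_1=\sqrt{2}\,\xi_1$, one concludes $\I_{p,q}\bigl(e_1^{\otimes p}\otimes \overline{e}_1^{\otimes q}\bigr)=J_{p,q}(Z(e_1))$.

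Next, I would invoke Remark \ref{expand z}, which expresses $Z(e_1)=I_1(u_{1,0}(1))+\i\,I_1(v_{1,0}(1))$ as a complex-valued real Wiener-It\^o integral. Writing $x:=I_1(u_{1,0}(1))$ and $y:=I_1(v_{1,0}(1))$, identity \eqref{real and complex Hermite} immediately yields
\begin{equation*}
J_{p,q}(Z(e_1))=\sum_{j=0}^{p+q}\i^{p+q-j}\sum_{r+s=j}\binom{p}{r}\binom{q}{s}(-1)^{q-s}H_j(x)H_{p+q-j}(y).
\end{equation*}

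The final step is to show $H_j(x)H_{p+q-j}(y)=I_{p+q}\bigl(u_{1,0}(1)^{\otimes j}\otimes v_{1,0}(1)^{\otimes (p+q-j)}\bigr)$. Here I would first verify by direct computation that $\{u_{1,0}(1),v_{1,0}(1)\}$ is an orthonormal pair in $\mathfrak{H}\oplus\mathfrak{H}$ (both have unit norm since $\|e_1^1\|^2+\|e_1^2\|^2=2$, and the inner product reduces to $\langle e_1^1,e_1^2\rangle-\langle e_1^2,e_1^1\rangle=0$), so that they can be taken as the first two members of a complete orthonormal system of $\mathfrak{H}\oplus\mathfrak{H}$. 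Then the Fourier-Hermite identification \eqref{def of real integral}, applied to the multi-index with $m_1=j$, $m_2=p+q-j$, gives exactly the desired equality (since $I_{p+q}$ applied to a non-symmetric tensor is by convention $I_{p+q}$ applied to its symmetrization, which matches the isometry). Combining the last display with this substitution produces the claimed formula.

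The only genuinely delicate point is the bookkeeping of normalization constants in the first step, where one must correctly handle the factor $2^{(p+q)/2}$ introduced by $e_1=\sqrt{2}\,\xi_1$ against the normalizing constants in \eqref{ONB of (p,q)chaos} and \eqref{def of complex integral}; the remaining steps are direct applications of \eqref{real and complex Hermite} and \eqref{def of real integral}.
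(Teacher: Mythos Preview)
Your proposal is correct and follows essentially the same approach as the paper's proof: reduce $\I_{p,q}(e_1^{\otimes p}\otimes\overline{e}_1^{\otimes q})$ to $J_{p,q}(Z(e_1))$, apply the complex/real Hermite identity \eqref{real and complex Hermite}, and identify each product $H_j H_{p+q-j}$ as a real Wiener--It\^o integral via \eqref{def of real integral}. The paper's version is slightly terser (it writes $\I_{p,q}(e_1^{\otimes p}\otimes\overline{e}_1^{\otimes q})=J_{p,q}(Z(e_1))$ directly and skips the explicit orthonormality check), while you spell out the normalization with $\xi_1=e_1/\sqrt2$ and verify that $u_{1,0}(1),v_{1,0}(1)$ form an orthonormal pair, but the substance is identical.
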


\begin{Prop}\label{Prop repre for basis}
	For	$\textbf{p}=\left\lbrace p_k \right\rbrace_{k=1}^{\infty}, \textbf{q}=\left\lbrace q_k \right\rbrace_{k=1}^{\infty} \in\Lambda$ with $|\textbf{p}|=p$ and $|\textbf{q}|=q$,
	\begin{equation}
		\I_{p,q}\left(\mathrm{symm}\left( \otimes_{k=1}^{\infty}e_k^{\otimes p_k}\right) \otimes  \mathrm{symm}\left( \otimes_{k=1}^{\infty}\overline{e}_k^{\otimes q_k}\right)\right)=I_{p+q}\left(u(\textbf{p},\textbf{q})\right)  +\i I_{p+q}\left(v(\textbf{p},\textbf{q})\right) ,
	\end{equation}
	where $I_{p+q}(\cdot)$ is the real $(p+q)$-th Wiener-It\^{o} integral with respect to $W$.  	
\end{Prop}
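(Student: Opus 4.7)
The plan is to lift Lemma \ref{lemma repre for sub basis}, which handles a single basis element $e_1$, to the multi-index case by factorizing both sides across $k\ge 1$ and reassembling. The key bridge is the observation that $\I_{p,q}$ of a pure tensor built from the $e_k$'s equals an explicit product of complex Hermite polynomials evaluated at the $Z(e_k)$'s, and each factor can then be individually converted to a real expression via Lemma \ref{lemma repre for sub basis}.

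\textbf{Step 1: Factorize the left-hand side as a product of $J_{p_k,q_k}$.} Set $\xi_k := e_k/\sqrt{2}$, so that $\{\xi_k\}_{k\ge1}$ is an orthonormal basis of $\mathfrak{H}_{\mathbb{C}}$ and $\sqrt{2}\,Z(\xi_k)=Z(e_k)$. Using $\mathrm{symm}(\otimes_k e_k^{\otimes p_k}) = 2^{p/2}\mathrm{symm}(\otimes_k \xi_k^{\otimes p_k})$ (and analogously for $\bar e_k$), together with the isometry \eqref{def of complex integral} and the definition \eqref{ONB of (p,q)chaos}, the factors $2^{(p+q)/2}$, $\sqrt{2^{p_k+q_k}}$ and $\sqrt{\mathbf{p}!\mathbf{q}!}$ cancel telescopically and yield
\begin{equation*}
\I_{p,q}\!\Big(\mathrm{symm}(\otimes_k e_k^{\otimes p_k})\otimes\mathrm{symm}(\otimes_k \bar e_k^{\otimes q_k})\Big)=\prod_{k=1}^{\infty} J_{p_k,q_k}\bigl(Z(e_k)\bigr),
\end{equation*}
which is a genuinely finite product because $\mathbf{p},\mathbf{q}\in\Lambda$.

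\textbf{Step 2: Expand each factor.} By Remark \ref{expand z}, $Z(e_k)=W(u_{1,0}(k))+\i\, W(v_{1,0}(k))$, and Lemma \ref{lemma_basis} guarantees that $\{u_{1,0}(k),v_{1,0}(k)\}_{k\ge 1}$ is an orthonormal basis of $\mathfrak{H}\oplus\mathfrak{H}$. Applying \eqref{real and complex Hermite} (exactly as in the proof of Lemma \ref{lemma repre for sub basis}, but with $e_1$ replaced by $e_k$),
\begin{equation*}
J_{p_k,q_k}\bigl(Z(e_k)\bigr)=\sum_{j_k=0}^{p_k+q_k} a_{k,j_k}\, H_{j_k}\bigl(W(u_{1,0}(k))\bigr)\, H_{p_k+q_k-j_k}\bigl(W(v_{1,0}(k))\bigr).
\end{equation*}

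\textbf{Step 3: Recognize the product as one real Wiener--It\^o integral.} Distributing the product over $k$ of the sums in Step 2, one obtains a finite sum indexed by $\mathbf{j}\le\mathbf{p}+\mathbf{q}$. For each such $\mathbf{j}$, identity \eqref{def of real integral} applied to the basis $\{u_{1,0}(k),v_{1,0}(k)\}_{k\ge 1}$ of $\mathfrak{H}\oplus\mathfrak{H}$ gives
\begin{equation*}
\prod_{k=1}^{\infty} H_{j_k}\bigl(W(u_{1,0}(k))\bigr)\, H_{p_k+q_k-j_k}\bigl(W(v_{1,0}(k))\bigr)=I_{p+q}\!\Big(\mathrm{symm}\bigl(\otimes_k (u_{1,0}(k)^{\otimes j_k}\otimes v_{1,0}(k)^{\otimes(p_k+q_k-j_k)})\bigr)\Big).
\end{equation*}
Finally, splitting the scalar $\prod_k a_{k,j_k}\in\mathbb{C}$ into its real and imaginary parts and using the $\mathbb{R}$-linearity of $I_{p+q}$ on real-valued kernels produces precisely $I_{p+q}\bigl(u(\mathbf{p},\mathbf{q})\bigr)+\i\, I_{p+q}\bigl(v(\mathbf{p},\mathbf{q})\bigr)$ by the very definitions of $u(\mathbf{p},\mathbf{q})$ and $v(\mathbf{p},\mathbf{q})$.

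The main technical nuisance is Step 1: one must carefully track the $\sqrt{2}$'s arising from $\|e_k\|_{\mathfrak{H}_{\mathbb{C}}}=\sqrt{2}$, the $\sqrt{2^{p_k+q_k}}$ inside \eqref{ONB of (p,q)chaos}, the substitution $\sqrt{2}Z(\xi_k)=Z(e_k)$, and the $\sqrt{\mathbf{p}!\mathbf{q}!}$ normalization of \eqref{def of complex integral}, and verify that they cancel exactly. Once the clean product identity of Step 1 is in hand, the remainder is a routine distribution of finite sums across tensor slots combined with the already proved single-index Lemma \ref{lemma repre for sub basis}.
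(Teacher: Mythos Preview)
Your proposal is correct and follows essentially the same route as the paper: factorize the complex integral as $\prod_k J_{p_k,q_k}(Z(e_k))$ via \eqref{def of complex integral}--\eqref{ONB of (p,q)chaos}, apply the single-index expansion of Lemma~\ref{lemma repre for sub basis} (equivalently \eqref{real and complex Hermite}) to each factor, distribute the finite product, and reassemble using \eqref{def of real integral} on the orthonormal basis $\{u_{1,0}(k),v_{1,0}(k)\}$. The paper's proof is terser---it writes the factorization directly as $\prod_k \I_{p_k,q_k}(e_k^{\otimes p_k}\otimes \bar e_k^{\otimes q_k})$ and invokes Lemma~\ref{lemma repre for sub basis} without unpacking the $\sqrt{2}$-bookkeeping you carefully track in Step~1---but the substance is identical.
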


Proofs of Lemma \ref{lemma repre for sub basis}, Proposition \ref{Prop repre for basis} and Theorem \ref{First proof} are presented in Section \ref{Section5.2}.

\begin{Rem}\label{remark}
	Chen and Liu in \cite[Theorem 3.3]{chen2017fourth} showed that for $f\in\mathfrak{H}_{\mathbb{C}}^{\odot p}\otimes\mathfrak{H}_{\mathbb{C}}^{\odot q}$, there exist $u,v\in\left(  \mathfrak{H}\oplus\mathfrak{H}\right) ^{\odot (p+q)}$ such that 
	\begin{equation}
		\mathrm{Re}\,\I_{p,q}(f)=I_{p+q}(u),\quad\mathrm{Im}\,\I_{p,q}(f)=I_{p+q}(v),
	\end{equation}
	where $I_{p+q}(\cdot)$ is the real $(p+q)$-th Wiener-It\^o integral with respect to $W$. In order to prove \cite[Theorem 3.3]{chen2017fourth}, Chen and Liu introduced $n+1$ parameters $0<\theta_{n}<\dots<\theta_{0}<\pi$ and defined the invertible matrix  $\mathrm{M}$ as
	\begin{align}
		\mathrm{M}&=\mathrm{M}\left(\theta_{0}, \ldots, \theta_{n}\right)=\left( \mathrm{M}_{ij}\right)_{0\leq i,j\leq n} \\
		&=\left[\begin{array}{ccccc}
			\left( \sin\theta_{0}\right)^n & \binom{n}{1}\left( \sin\theta_{0}\right)^{n-1}\cos\theta_{0}&\cdots&\binom{n}{n-1}\sin\theta_{0}\left( \cos\theta_{0}\right)^{n-1}&	\left( \cos\theta_{0}\right)^n \\
			\left( \sin\theta_{1}\right)^n & \binom{n}{1}\left( \sin\theta_{1}\right)^{n-1}\cos\theta_{1}&\cdots&\binom{n}{n-1}\sin\theta_{1}\left( \cos\theta_{1}\right)^{n-1}&	\left( \cos\theta_{1}\right)^n \\
			\vdots&\vdots&\vdots&\vdots&\vdots\\
			\left( \sin\theta_{n}\right)^n & \binom{n}{1}\left( \sin\theta_{n}\right)^{n-1}\cos\theta_{n}&\cdots&\binom{n}{n-1}\sin\theta_{n}\left( \cos\theta_{n}\right)^{n-1}& \left( \cos\theta_{n}\right)^n 
		\end{array}\right].
	\end{align}
	They used the properties of Hermite polynomials \eqref{real and complex Hermite} and \eqref{Hermite invariant}, solved $n+1$ linear equations and expressed $J_{k, n-k}\left(e_1^{\otimes k}\otimes\overline{e}_1^{\otimes (n-k)}\right)$ (see \cite[Equation (21)]{chen2017fourth}) as
	\begin{align}\label{21}
		J_{k, n-k}\left(Z(e_1)\right)
		&=\sum_{j=0}^{n} c_jH_j\left( \mathrm{Re} Z(e_1)\right) H_{n-j}\left( \mathrm{Im} Z(e_1)\right)\\
		&=\sum_{j=0}^{n} c_j\sum_{l=0}^{n}\left( \mathrm{M}^{-1}\right) _{jl}H_n\left( W(\cos\theta_lu_{1,0}(1)+\sin\theta_lv_{1,0}(1))\right),
	\end{align}
	where $c_j=\mathrm{i}^{n-j} \sum_{r+s=j}\binom{k}{r}\binom{n-k}{s}(-1)^{n-k-s}$.
	
	Actually, the right hand side of \eqref{21} does not depend on these redundant parameters $\theta_0,\ldots,\theta_{n}$. From the perspective on Wiener-It\^o integrals, utilizing the linearity, we get that 
	\begin{align}\label{repre for symmetric tensor product}
		&\sum_{l=0}^{n}\left( \mathrm{M}^{-1}\right) _{jl}H_n\left( W(\cos\theta_lu_{1,0}(1)+\sin\theta_lv_{1,0}(1))\right)\\
		=&\sum_{l=0}^{n}\left( \mathrm{M}^{-1}\right) _{jl}I_n\left(\left(  \cos\theta_lu_{1,0}(1)+\sin\theta_lv_{1,0}(1)\right) ^{\otimes n}\right) \\
		=&\sum_{l=0}^{n}\left( \mathrm{M}^{-1}\right) _{jl}I_n\left( \sum_{r=0}^{n}\binom{n}{r}\left( \cos\theta_l\right) ^r \left( \sin\theta_l\right) ^{n-r} u_{1,0}(1)^{\otimes r}\otimes v_{1,0}(1)^{\otimes (n-r)}\right) \\
		=&\sum_{r=0}^{n}\sum_{l=0}^{n}\left( \mathrm{M}^{-1}\right) _{jl} \mathrm{M} _{lr}I_n\left( u_{1,0}(1)^{\otimes r}\otimes v_{1,0}(1)^{\otimes (n-r)}\right)\\
		=&\sum_{r=0}^{n}\left( \mathrm{M}^{-1}\mathrm{M}\right) _{jr} I_n\left( u_{1,0}(1)^{\otimes r} v_{1,0}(1)^{\otimes (n-r)}\right)\\
		=&I_n\left( u_{1,0}(1)^{\otimes j}\otimes v_{1,0}(1)^{\otimes (n-j)}\right).
	\end{align}
	That is, 
	\begin{align}\label{in simple proof}
		&\I_{k, n-k}\left(e_1^{\otimes k}\otimes\overline{e}_1^{\otimes (n-k)}\right)=J_{k, n-k}\left(Z\left( e_1\right) \right)\\
		=&\sum_{j=0}^{n} \mathrm{i}^{n-j} \sum_{r+s=j}\binom{k}{r}\binom{n-k}{s}(-1)^{n-k-s}I_n\left( u_{1,0}(1)^{\otimes j}\otimes v_{1,0}(1)^{\otimes (n-j)}\right),
	\end{align}
	which is exactly what we state in Lemma \ref{lemma repre for sub basis} and implies that the right hand side of \eqref{21} does not depend on these redundant parameters $\theta_0,\ldots,\theta_{n}$. As a by-product, we get an expression for  $\mathrm{symm}\left( u_{1,0}(1)^{\otimes j}\otimes v_{1,0}(1)^{\otimes (n-j)}\right)$ as
	\begin{align}
		&\mathrm{symm}\left( u_{1,0}(1)^{\otimes j}\otimes v_{1,0}(1)^{\otimes (n-j)}\right)\\=&\sum_{l=0}^{n}\left( \mathrm{M}^{-1}\right) _{jl}\left(  \cos\theta_lu_{1,0}(1)+\sin\theta_lv_{1,0}(1)\right) ^{\otimes n},\quad \forall 0<\theta_{n}<\dots<\theta_{0}<\pi.
	\end{align}
	
\end{Rem}

\subsection{Representation theorem}\label{Section3.2}

Note that $$\left\lbrace 2^{-\frac{p+q}{2}}\left( \otimes_{k=1}^{\infty}e_k^{\otimes p_k}\right)  \otimes   \left( \otimes_{k=1}^{\infty}\overline{e}_k^{\otimes q_k}\right)  :\textbf{p}=\left\lbrace p_k \right\rbrace_{k=1}^{\infty},\textbf{q}=\left\lbrace q_k \right\rbrace_{k=1}^{\infty}\in\Lambda, |\textbf{p}|=p,|\textbf{q}|=q \right\rbrace$$ is a complete orthonormal basis of $\mathfrak{H}_{\mathbb{C}}^{\odot p}\otimes\mathfrak{H}_{\mathbb{C}}^{\odot q}$. In Section \ref{Section3.1}, based on the representation for $\I_{p, q}\left(e_k^{\otimes p}\otimes\overline{e}_k^{\otimes q}\right) $ with $k\geq1$ (see Lemma \ref{lemma repre for sub basis}), we prove the uniqueness theorem (Theorem \ref{First proof}). This leads to somewhat complicated representation for $$	\I_{p,q}\left(\mathrm{symm}\left( \otimes_{k=1}^{\infty}e_k^{\otimes p_k}\right) \otimes  \mathrm{symm}\left( \otimes_{k=1}^{\infty}\overline{e}_k^{\otimes q_k}\right)\right) .$$ In this section, we directly get more explicit recursion  formulae for kernels of the real and imaginary parts of $	\I_{p,q}\left(\mathrm{symm}\left( \otimes_{k=1}^{\infty}e_k^{\otimes p_k}\right) \otimes  \mathrm{symm}\left( \otimes_{k=1}^{\infty}\overline{e}_k^{\otimes q_k}\right)\right) $ by an induction argument.

Before illustrating the representation theorem, we introduce some notations. Define $\k = (k_1,k_2,\ldots)$ and $\j = (j_1,j_2,\ldots)$. 
For ease of notations, we write
\begin{equation}
	\begin{aligned}
		\I_{p,q}(\k,\j) :&= \I_{p,q}(\mathrm{symm}(e_{k_1}\otimes\cdots\otimes e_{k_p})\otimes \mathrm{symm}(\overline{e_{j_1}}\otimes\cdots \otimes\overline{e_{j_q}})) \\&= \I_{p,q}(e_{k_1}\otimes\cdots\otimes e_{k_p}\otimes \overline{e_{j_1}}\otimes\cdots \otimes\overline{e_{j_q}}),
	\end{aligned}
\end{equation}	
and
\begin{align}
	u_{p,q}(\k,\j) &:= u_{p,q}(k_1,\dots, k_p;j_1,\dots, j_q),\\
	v_{p,q}(\k,\j) &:= v_{p,q}(k_1,\dots, k_p;j_1,\dots, j_q),
\end{align} 
where $u_{p,q}(k_1,\dots, k_p;j_1,\dots, j_q),\;v_{p,q}(k_1,\dots, k_p;j_1,\dots, j_q)\in(\mathfrak{H}\oplus\mathfrak{H})^{\odot (p+q)}$ are recursively defined by 
\begin{align}
	&u_{0,1}(j)=u_{1,0}(j), \quad
	v_{0,1}(j)=-v_{1,0}(j)\label{uv_10},\\
	u_{p,q}(\k,\j) &=u_{p-1,q}(\k,\j)\tilde{\otimes}u_{1,0}(k_p)- v_{p-1,q}(\k,\j) \tilde{\otimes}v_{1,0}(k_p) \label{u_recurrence} \\
	&=u_{p,q-1}(\k,\j)\tilde{\otimes}u_{0,1}(j_q)-v_{p,q-1}(\k,\j)\tilde{\otimes}v_{0,1}(j_q), \\
	v_{p,q}(\k,\j)&=u_{p-1,q}(\k,\j)\tilde{\otimes}v_{1,0}(k_p)\ + v_{p-1,q}(\k,\j)\tilde{\otimes}u_{1,0}(k_p) \label{v_recurrence} \\
	&=u_{p,q-1}(\k,\j)\tilde{\otimes}v_{0,1}(j_q)+v_{p,q-1}(\k,\j)\tilde{\otimes}u_{0,1}(j_q). 
\end{align}
Now we explain that $u_{p,q}(\k,\j)$ and $v_{p,q}(\k,\j)$ are well defined. Since 
$\I_{q,p}(\j,\k)=\overline{\I_{p,q}(\k,\j)}$,
we have
\begin{align}\label{uv_conjugate}
	u_{p,q}(\k,\j)=u_{q,p}(\j,\k) \quad \mbox{and} \quad
	v_{p,q}(\k,\j)&=-v_{q,p}(\j,\k).
\end{align}
Then
\begin{align}
	u_{p,q}(\k,\j)=u_{q,p}(\j,\k)
	&=u_{q-1,p}(\j,\k)\tilde{\otimes}u_{1,0}(j_q)-v_{q-1,p}(\j,\k)\tilde{\otimes}v_{1,0}(j_q)\\
	&=u_{p,q-1}(\k,\j)\tilde{\otimes}u_{0,1}(j_q)-v_{p,q-1}(\k,\j)\tilde{\otimes}v_{0,1}(j_q),
\end{align}
where the last equality follows from \eqref{uv_conjugate}. Thus the second equality of \eqref{u_recurrence} holds, that is, $u_{p,q}(\k,\j)$ is well defined. Similarly, one can show that $v_{p,q}(\k,\j)$ is well defined.

\begin{Thm}\label{prop_represent}
	For $p\geq0$, $q\geq0$, $p+q>0$, we have
	\begin{align}\label{I_represent}
		\I_{p,q}\left(\k,\j\right) = I_{p+q}\left(u_{p,q}\left(\k,\j\right)\right) +\mathrm{i}I_{p+q}\left( v_{p,q}\left(\k,\j\right)\right),
	\end{align}
	where $I_{p+q}(\cdot)$ is the real $(p+q)$-th Wiener-It\^{o} integral with respect to $W$.
\end{Thm}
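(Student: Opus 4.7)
The plan is to prove Theorem \ref{prop_represent} by induction on the total order $n = p+q \geq 1$. For the base case $n = 1$ with $(p,q) = (1,0)$, the identity is exactly Remark \ref{expand z}, which yields $\I_{1,0}(e_k) = I_1(u_{1,0}(k)) + \i I_1(v_{1,0}(k))$. For $(p,q) = (0,1)$, it suffices to take the complex conjugate of the $(1,0)$ identity and invoke the conventions $u_{0,1}(j) = u_{1,0}(j)$ and $v_{0,1}(j) = -v_{1,0}(j)$ stipulated in \eqref{uv_10}.

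For the inductive step, fix $(p,q)$ with $p+q = n \geq 2$ and suppose the formula holds for all pairs of total order $n-1$. Assume without loss of generality that $p \geq 1$; the case $p = 0$ will be deduced from the $(q,0)$ case by complex conjugation together with the symmetries in \eqref{uv_conjugate}. Write $\k' = (k_1,\dots,k_{p-1})$. The key idea is to compute the product $\I_{p-1,q}(\k',\j) \cdot \I_{1,0}(e_{k_p})$ in two different ways and then equate the projections onto $\mathcal{H}_n^{\mathbb{C}}(W)$.

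On the one hand, the complex product formula \eqref{complex product} gives
\[
\I_{p-1,q}(\k',\j) \cdot \I_{1,0}(e_{k_p}) = \I_{p,q}(\k,\j) + q\,\I_{p-1,q-1}(h),
\]
where the remainder (absent when $q = 0$) comes from the single admissible $(i,j) = (1,0)$ contraction and belongs to $\mathscr{H}_{p-1,q-1}(Z) \subseteq \mathcal{H}_{n-2}^{\mathbb{C}}(W)$ by Proposition \ref{Prop repre for basis}. On the other hand, the inductive hypothesis supplies
\[
\I_{p-1,q}(\k',\j) = I_{n-1}(U) + \i I_{n-1}(V), \qquad \I_{1,0}(e_{k_p}) = I_1(u) + \i I_1(v),
\]
with $U = u_{p-1,q}(\k',\j)$, $V = v_{p-1,q}(\k',\j)$, $u = u_{1,0}(k_p)$, and $v = v_{1,0}(k_p)$. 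Expanding the product and applying the real product formula \eqref{Product_formula} to each of the four bilinear terms decomposes the product into a piece in $\mathcal{H}_n(W)$ and a piece in $\mathcal{H}_{n-2}(W)$. The top-chaos contribution is
\[
I_n(U\,\tilde{\otimes}\,u - V\,\tilde{\otimes}\,v) + \i I_n(U\,\tilde{\otimes}\,v + V\,\tilde{\otimes}\,u) = I_n(u_{p,q}(\k,\j)) + \i I_n(v_{p,q}(\k,\j)),
\]
exactly matching the recursions \eqref{u_recurrence} and \eqref{v_recurrence}. Since $\sigma(W) = \sigma(Z)$ and the Wiener chaos decomposition is unique on $L^2_{\mathbb{C}}(\Omega)$, equating the $\mathcal{H}_n^{\mathbb{C}}(W)$-projections of both expressions for the common random variable completes the inductive step.

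The main difficulty is anticipated to be bookkeeping rather than conceptual: one must verify carefully that the complex remainder $q\,\I_{p-1,q-1}(h)$ indeed sits in $\mathcal{H}_{n-2}^{\mathbb{C}}(W)$ (so that the $\mathcal{H}_n^{\mathbb{C}}$-projection cleanly isolates $\I_{p,q}(\k,\j)$), and that the $(i,j) = (0,0)$ term of the complex product formula recovers $\I_{p,q}(\k,\j)$ after the symmetrization implicit in the definition of $\I_{p,q}$. As a by-product, equating the $\mathcal{H}_{n-2}^{\mathbb{C}}(W)$-components yields a recursive identity between the symmetrized contractions $U\,\tilde{\otimes}_1\,u \mp V\,\tilde{\otimes}_1\,v$ and the kernels of $\I_{p-1,q-1}(h)$, consistent with applying Theorem \ref{prop_represent} at the lower order.
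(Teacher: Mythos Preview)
Your argument is correct and follows the same inductive skeleton as the paper---both compute $\I_{p-1,q}(\k',\j)\cdot\I_{1,0}(e_{k_p})$ in two ways and match the top-order terms---but you handle the lower-order remainder differently. The paper carries an auxiliary contraction identity $\mathbf{B}(p,q)$, namely
\[
(p+q)\,I_{p+q-1}\bigl(w_{p,q}(\k,\j)\tilde{\otimes}_1 w_{1,0}(k_{p+1})\bigr)=\mathbf{1}_{\{q>0\}}\sum_{r=1}^{q}2\delta_{j_r,k_{p+1}}\I_{p,q-1}(\k,\widehat{\j}_r),
\]
through the induction alongside $\mathbf{A}(p,q)$, using It\^o's recurrence \cite[Theorem~9]{ito1952complex} rather than the full product formula~\eqref{complex product}. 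This double induction is self-contained: it never invokes Proposition~\ref{Prop repre for basis} or any prior result placing $\mathscr{H}_{p-1,q-1}(Z)$ inside $\mathcal{H}_{n-2}^{\mathbb{C}}(W)$. Your route is shorter because you offload that fact to Proposition~\ref{Prop repre for basis} (already proved independently in Section~\ref{Section3.1}) and simply project onto $\mathcal{H}_n^{\mathbb{C}}(W)$, discarding the remainder without ever identifying it. What you lose is precisely the explicit formula $\mathbf{B}(p,q)$; you note it arises as a by-product from the $\mathcal{H}_{n-2}^{\mathbb{C}}$-components, but in the paper it is part of the inductive hypothesis and is used again in Step~3 to close the loop. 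Both approaches are valid; yours is more economical given the surrounding results, while the paper's is logically independent of Section~\ref{Section3.1}.
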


\begin{Rem}
	Take $\k=(\underbrace{1,\ldots,1}_{p}, 0,\ldots)$, $\j=(\underbrace{1,\ldots,1}_{q}, 0,\ldots)$ and consider $\I_{p,q}(e_1^{\otimes p}\otimes\overline{e}_1^{\otimes q})$. Using the recursion formulae of $u_{p,q}(\k,\j)$ and $v_{p,q}(\k,\j)$ repeatly, we further get  
	\begin{equation}\label{kernel}
		u_{p,q}(\k,\j)+\i v_{p,q}(\k,\j) =\mathrm{symm}\left(\sum_{j=0}^p\sum_{l=0}^{q-1} \binom{p}{j}\binom{q-1}{l}\left( f_{p,q}(j,l)+\i g_{p,q}(j,l) \right) \right),
	\end{equation}
	where, after a tedious calculation, $f_{p,q}(j,l)$ and $g_{p,q}(j,l)$ are obtained as 	
	\begin{align}\label{expand}
		&f_{p,q}(j,l)+\i g_{p,q}(j,l)\\=&
		\i^{j+l}(-1)^lu_{1,0}(1)^{\otimes (p+q-(j+l+1))}\otimes v_{1,0}(1)^{\otimes (j+l)}\otimes\left(  u_{1,0}(1)- \i v_{1,0}(1)\right).
	\end{align}
	One can verify that \eqref{kernel} is equal to the kernel expression in \eqref{repre for sub basis} in the sense of symmetry.
\end{Rem}

Let $\left\{ e_k\right\}_{k\geq1} $ with $\left\|e_k \right\|_{\mathfrak{H}_{\mathbb{C}}} =\sqrt2$ be complete and orthogonal elements in $\mathfrak{H}_{\mathbb{C}}$. For $f\in\mathfrak{H}_{\mathbb{C}}^{\odot p}\otimes\mathfrak{H}_{\mathbb{C}}^{\odot q}$, there exists a unique sequence
$$\left\{ b_{p,q}(\k,\j):=b(k_1,\dots,k_p;j_1,\dots,j_q),k_1,\dots,k_p,j_1,\dots,j_q\geq1\right\}\subseteq{\mathbb{C}}$$ 
such that $\sum_{k_1,\dots,k_p,j_1,\dots,j_q=1}^{\infty}|b_{p,q}(\k,\j)|^2<\infty$ and 
\begin{equation}\label{complex kernel}
	f=\sum_{k_1,\dots,k_p,j_1,\dots,j_q=1}^{\infty}b_{p,q}(\k,\j)(\mathrm{symm}(e_{k_1}\otimes\cdots\otimes e_{k_p})\\
	\otimes\mathrm{symm}(\overline{e_{j_1}}\otimes\cdots\otimes \overline{e_{j_q}})).
\end{equation}

Now we are able to restate the uniqueness theorem (Theorem \ref{First proof}) as a version of recursion representation.	
\begin{Thm}[Representation Theorem]\label{Thm 1}
	Suppose $f\in\mathfrak{H}_{\mathbb{C}}^{\odot p}\otimes\mathfrak{H}_{\mathbb{C}}^{\odot q}$ with the expansion given by \eqref{complex kernel}. Then $\I_{p,q}(f)$ admits the unique representation
	\begin{align}
		\I_{p,q}(f) &=  \sum_{k_1,\dots,k_p,j_1,\dots,j_q=1}^{\infty} I_{p+q}\left(\mathrm{Re}\left( b_{p,q}(\k,\j)\right)  u_{p,q}\left(\k,\j\right)-\mathrm{Im}\left( b_{p,q}(\k,\j)\right)  v_{p,q}\left(\k,\j\right) \right)\\&\quad+\i \sum_{k_1,\dots,k_p,j_1,\dots,j_q=1}^{\infty} I_{p+q}\left(\mathrm{Im}\left( b_{p,q}(\k,\j)\right)  u_{p,q}\left(\k,\j\right)+\mathrm{Re}\left( b_{p,q}(\k,\j)\right)  v_{p,q}\left(\k,\j\right)\right),
	\end{align}  
	where $I_{p+q}(\cdot)$ is the real $(p+q)$-th Wiener-It\^{o} integral with respect to $W$.  
\end{Thm}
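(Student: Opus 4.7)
The plan is to reduce Theorem \ref{Thm 1} to Theorem \ref{prop_represent} by a routine bilinearity argument, and then invoke Theorem \ref{First proof} for uniqueness. The key observation is that Theorem \ref{prop_represent} already supplies a real-kernel formula for every basis integrand $\I_{p,q}(\k,\j)$, so it only remains to assemble a linear combination and to split the complex scalars $b_{p,q}(\k,\j)$ into their real and imaginary parts.

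First, the $\mathbb{C}$-linearity and $L^2$-continuity of $\I_{p,q}$ (which is, up to the factor $\sqrt{p!q!}$, an isometry from $\mathfrak{H}_{\mathbb{C}}^{\odot p}\otimes\mathfrak{H}_{\mathbb{C}}^{\odot q}$ onto $\mathscr{H}_{p,q}(Z)$) applied to the expansion \eqref{complex kernel} of $f$ yields
\begin{equation*}
\I_{p,q}(f)=\sum_{k_1,\dots,k_p,j_1,\dots,j_q=1}^{\infty} b_{p,q}(\k,\j)\,\I_{p,q}(\k,\j).
\end{equation*}
Writing $b_{p,q}(\k,\j) = \mathrm{Re}(b_{p,q}(\k,\j)) + \i\,\mathrm{Im}(b_{p,q}(\k,\j))$ and substituting the identity from Theorem \ref{prop_represent}, each summand becomes $(\alpha+\i\beta)(I_{p+q}(u_{p,q}(\k,\j))+\i I_{p+q}(v_{p,q}(\k,\j)))$ with $\alpha=\mathrm{Re}(b_{p,q}(\k,\j))$, $\beta=\mathrm{Im}(b_{p,q}(\k,\j))$. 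Expanding the product and using $\mathbb{R}$-linearity of $I_{p+q}$ to pull the real scalars inside the kernel regroups the four cross-terms into two real $(p+q)$-th Wiener--It\^o integrals with kernels $\alpha\,u_{p,q}(\k,\j)-\beta\,v_{p,q}(\k,\j)$ and $\beta\,u_{p,q}(\k,\j)+\alpha\,v_{p,q}(\k,\j)$, matching the statement exactly.

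For uniqueness, I would match the expansion \eqref{complex kernel} with \eqref{another expansion} by grouping the unordered indices $(k_1,\dots,k_p;j_1,\dots,j_q)$ according to their multiplicity sequences $(\mathbf{p},\mathbf{q})\in\Lambda^2$; this identifies the $\{b_{p,q}(\k,\j)\}$ with the $\{c(\mathbf{p};\mathbf{q})\}$ up to combinatorial factors, so the representation inherits uniqueness from Theorem \ref{First proof}. Alternatively, uniqueness is immediate from the isometry of $I_{p+q}$ on $(\mathfrak{H}\oplus\mathfrak{H})^{\odot(p+q)}$, which forces the real kernels of $\mathrm{Re}\,\I_{p,q}(f)$ and $\mathrm{Im}\,\I_{p,q}(f)\in\mathcal{H}_{p+q}(W)$ to be uniquely determined by $f$. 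I do not expect a genuine obstacle: the one technical point is convergence of the series in $L^2_{\mathbb{C}}(\Omega)$, which via the isometry $\|I_{p+q}(g)\|_{L^2}^2=(p+q)!\,\|g\|^2$ reduces to uniform control of $\|u_{p,q}(\k,\j)\|$ and $\|v_{p,q}(\k,\j)\|$; this is easily extracted from the recursions \eqref{u_recurrence}--\eqref{v_recurrence} together with the orthonormality of $\{u_{1,0}(k),v_{1,0}(k)\}_{k\geq1}$ provided by Lemma \ref{lemma_basis}.
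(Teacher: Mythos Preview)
Your proposal is correct and follows essentially the same approach as the paper: both arguments expand $\I_{p,q}(f)$ by linearity, apply Theorem \ref{prop_represent} term by term, and pull the complex scalars $b_{p,q}(\k,\j)$ inside $I_{p+q}$. The paper's proof is terser (it does not separately discuss convergence or uniqueness, treating the theorem as a direct restatement of Theorem \ref{First proof}), but your added remarks on $L^2$-convergence and on uniqueness via the isometry of $I_{p+q}$ are sound and do not diverge from the paper's line of reasoning.
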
	
We prove Theorem \ref{prop_represent} by induction and Theorem \ref{Thm 1} by an approximation argument in Section \ref{Section5.3}.

From now on, we assume that $D$ is the real Malliavin derivative operator with respect to the isonormal Gaussian process $W$ over $\mathfrak{H}\oplus\mathfrak{H}$. As a corollary of Theorem \ref{prop_represent}, we obtain a sufficient and necessary condition of the existence of density of the complex Wiener-It\^o integral $\I_{p,q}(\k,\j)$.

\begin{Coro}\label{simple density exist}
	Consider a complex random variable
	\begin{align}
		\I_{p,q}(e_{k_1}\otimes\cdots\otimes e_{k_p}\otimes \overline{e_{j_1}}\otimes\cdots \otimes\overline{e_{j_q}})  := F_1+\i F_2.
	\end{align}
	Without loss of generality, we assume that $
	1\leq k_1\leq\ldots\leq k_p, \, 1\leq j_1\leq\ldots\leq j_q$. Then the law of the two-dimensional random vector $(F_1,F_2)$ is absolutely continuous with respect to Lebesgue measure on $\mathbb{R}^2$ if and only if $p\neq q$ or if $p=q$ and there exists $1\leq l\leq p$ such that $k_l\neq j_l$.
\end{Coro}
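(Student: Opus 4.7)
The plan is to reduce the absolute continuity question to a linear independence question about the two real kernels $u_{p,q}(\k,\j)$ and $v_{p,q}(\k,\j)$ of Theorem \ref{prop_represent}, and then to settle that linear algebra question by packaging them into a single complex symmetric tensor. Theorem \ref{prop_represent} identifies $F_1=I_{p+q}(u_{p,q}(\k,\j))$ and $F_2=I_{p+q}(v_{p,q}(\k,\j))$, so I would invoke the standard density criterion for a vector of Wiener-It\^o integrals of the same order (\cite[Theorem 3.1]{NNP2013} or \cite[Theorem 3]{NT2017}): the law of $(F_1,F_2)$ is absolutely continuous on $\mathbb{R}^2$ if and only if $u_{p,q}(\k,\j)$ and $v_{p,q}(\k,\j)$ are $\mathbb{R}$-linearly independent in $(\mathfrak{H}\oplus\mathfrak{H})^{\odot(p+q)}$.

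One direction is immediate from \eqref{uv_conjugate}: if $p=q$ and $k_l=j_l$ for every $l$, then $v_{p,p}(\k,\k)=-v_{p,p}(\k,\k)=0$, so the two kernels are trivially dependent (equivalently, $\I_{p,p}(\k,\k)$ is real-valued, $F_2\equiv 0$, and $(F_1,F_2)$ lives on the $x$-axis). For the non-trivial direction I would set
\begin{equation*}
    U_{p,q}(\k,\j):=u_{p,q}(\k,\j)+\mathrm{i}\,v_{p,q}(\k,\j),\quad \alpha_k:=u_{1,0}(k)+\mathrm{i}\,v_{1,0}(k),\quad \beta_j:=u_{1,0}(j)-\mathrm{i}\,v_{1,0}(j),
\end{equation*}
in the complexification $(\mathfrak{H}\oplus\mathfrak{H})_{\mathbb{C}}$. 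Combining \eqref{u_recurrence}--\eqref{v_recurrence} with \eqref{uv_10} collapses the two real recursions into the single complex recursion $U_{p,q}(\k,\j)=U_{p-1,q}(\k,\j)\,\tilde\otimes\,\alpha_{k_p}=U_{p,q-1}(\k,\j)\,\tilde\otimes\,\beta_{j_q}$, and a short induction then yields the closed form
\begin{equation*}
    U_{p,q}(\k,\j)=\mathrm{symm}\bigl(\alpha_{k_1}\otimes\cdots\otimes\alpha_{k_p}\otimes\beta_{j_1}\otimes\cdots\otimes\beta_{j_q}\bigr).
\end{equation*}

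Using Lemma \ref{lemma_basis}, a short complexification computation gives $\langle\alpha_k,\alpha_l\rangle=\langle\beta_k,\beta_l\rangle=2\delta_{kl}$ and $\langle\alpha_k,\beta_l\rangle=0$, so $\{\alpha_k,\beta_l\}_{k,l\geq1}$ is a $\mathbb{C}$-orthogonal family in $(\mathfrak{H}\oplus\mathfrak{H})_{\mathbb{C}}$. Since $U_{p,q}(\k,\j)$ is a symmetrization of $p$ $\alpha$-factors and $q$ $\beta$-factors, while $\overline{U_{p,q}(\k,\j)}=U_{q,p}(\j,\k)$ (by \eqref{uv_conjugate}) is a symmetrization of $q$ $\alpha$-factors and $p$ $\beta$-factors, the permanent-type inner product formula on the symmetric tensor product forces these two tensors to be orthogonal in $((\mathfrak{H}\oplus\mathfrak{H})_{\mathbb{C}})^{\odot(p+q)}$ whenever $p\neq q$ or $\k\neq\j$ as sorted tuples, and in particular to be non-zero and $\mathbb{C}$-linearly independent. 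Since $\mathbb{R}$-linear independence of the real vectors $u_{p,q},v_{p,q}$ is equivalent to $\mathbb{C}$-linear independence of $U_{p,q}(\k,\j)$ and $\overline{U_{p,q}(\k,\j)}$ (the complex span of the latter pair coincides with the complexification of the real span of $u_{p,q},v_{p,q}$, so the two dimensions agree), this delivers the "if" direction.

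The main technical step I expect to occupy the proof is the derivation of the closed form for $U_{p,q}(\k,\j)$ out of the intertwined recursions, together with the complexification bookkeeping giving $\mathbb{C}$-orthogonality of $\{\alpha_k,\beta_l\}$ and hence of the symmetric tensors $U_{p,q}(\k,\j)$ and $U_{q,p}(\j,\k)$; both are routine once the conventions for $\tilde\otimes$ and the complexified inner product are unpacked but need careful attention to the symmetrization. Once these are in place, the density criterion of \cite{NNP2013} or \cite{NT2017} immediately yields the stated equivalence.
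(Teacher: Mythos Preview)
Your proof is correct and takes a genuinely different route from the paper's. Both arguments begin from Theorem~\ref{prop_represent} and agree on the easy direction (when $p=q$ and $\k=\j$ as sorted tuples, \eqref{uv_conjugate} forces $v_{p,p}(\k,\k)=0$). For the non-trivial direction, however, the paper opts for the Malliavin-derivative criterion of \cite[Theorem 3.1]{NNP2013}: it expands $DF_1,DF_2$ in the orthogonal frame $\{u_{1,0}(\cdot),v_{1,0}(\cdot)\}$ via the recursions \eqref{u_recurrence}--\eqref{v_recurrence} and argues that their coefficient vectors in $\mathfrak{H}\oplus\mathfrak{H}$ are orthogonal for a.s.\ $\omega$. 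The paper explicitly sets aside the kernel criterion of \cite[Theorem 3]{NT2017}, remarking that verifying linear independence in $(\mathfrak{H}\oplus\mathfrak{H})^{\odot(p+q)}$ seems harder. Your approach shows that it is in fact very clean once one complexifies: the recursion \eqref{(u+iv)_recurrence} already gives your closed form $U_{p,q}(\k,\j)=\mathrm{symm}(\alpha_{k_1}\otimes\cdots\otimes\alpha_{k_p}\otimes\beta_{j_1}\otimes\cdots\otimes\beta_{j_q})$, and the $\mathbb{C}$-orthogonality of $\{\alpha_k,\beta_l\}$ immediately makes $U_{p,q}(\k,\j)$ and $\overline{U_{p,q}(\k,\j)}=U_{q,p}(\j,\k)$ orthogonal (and nonzero) via the permanent formula whenever $p\neq q$ or the sorted tuples differ. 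What your route buys is a fully deterministic, purely tensor-algebraic argument with no $\omega$-wise reasoning; what the paper's route buys is that one works in $\mathfrak{H}\oplus\mathfrak{H}$ rather than its symmetric powers, at the cost of a somewhat sketchier coefficient-comparison in the sufficiency step.
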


\begin{proof}
	From Theorem \ref{prop_represent},
	\begin{equation*}
		F_1=I_{p+q}(u_{p,q}(\k,\j)),\quad F_2=I_{p+q}(v_{p,q}(\k,\j)).
	\end{equation*}
	By \eqref{u_recurrence} and the definition of symmetric tensor product,
	\begin{align}\label{symm u}
		u_{p,q}(\k,\j)=&\frac{1}{p+q}\Bigg[\sum_{l=1}^{p}(u_{p-1,q}(\widehat{\k}_l,\j)\otimes u_{1,0}(k_l) - v_{p-1,q}(\widehat{\k}_l,\j)\otimes v_{1,0}(k_l))
		\\&\qquad\quad+\sum_{r=1}^{q}(u_{p,q-1}(\k,\widehat{\j}_r) \otimes u_{0,1}(j_r) - v_{p,q-1}(\k,\widehat{\j}_r)\otimes v_{0,1}(j_r))\Bigg],
	\end{align}
	where the notations of $u_{p-1,q}(\widehat{\k}_l,\j)$, $v_{p-1,q}(\widehat{\k}_l,\j)$, $u_{p,q-1}(\k,\widehat{\j}_r)$ and $v_{p,q-1}(\k,\widehat{\j}_r)$ can be found in \eqref{kernel-1}. 
	Then,
	\begin{align}\label{DF1}
		&DF_1\\= & I_{p+q-1}\left(\sum_{l=1}^{p}u_{p-1,q}(\widehat{\k}_l,\j)\right)u_{1,0}(k_l) - I_{p+q-1} \left(\sum_{l=1}^{p}v_{p-1,q}(\widehat{\k}_l,\j)\right) v_{1,0}(k_l)  \\
		&+ I_{p+q-1}\left(\sum_{r=1}^{q}u_{p,q-1}(\k,\widehat{\j}_r)\right) u_{0,1}(j_r) - I_{p+q-1}\left(\sum_{r=1}^{q}v_{p,q-1}(\k,\widehat{\j}_r)\right) v_{0,1}(j_r)\\
		=&\sum_{l=1}^{p}I_{p+q-1}(u_{p-1,q}(\widehat{\k}_l,\j))u_{1,0}(k_l) - \sum_{l=1}^{p}I_{p+q-1}(v_{p-1,q}(\widehat{\k}_l,\j)) v_{1,0}(k_l)\\
		&\quad+\sum_{r=1}^{q}I_{p+q-1}(u_{p,q-1}(\k,\widehat{\j}_r)) u_{1,0}(j_r) +\sum_{r=1}^{q}I_{p+q-1}(v_{p,q-1}(\k,\widehat{\j}_r)) v_{1,0}(j_r).
	\end{align}
	Similarly,
	\begin{align}\label{DF2}
		&DF_2\\=&\sum_{l=1}^{p} I_{p+q-1}\left(v_{p-1,q}(\widehat{\k}_l,\j)\right)u_{1,0}(k_l) + \sum_{l=1}^{p}I_{p+q-1}\left(u_{p-1,q}(\widehat{\k}_l,\j)\right) v_{1,0}(k_l)\\
		&+\sum_{r=1}^{q}I_{p+q-1}\left(v_{p,q-1}(\k,\widehat{\j}_r)\right) u_{1,0}(j_r) - \sum_{r=1}^{q}I_{p+q-1}\left(u_{p,q-1}(\k,\widehat{\j}_r)\right) v_{1,0}(j_r).
	\end{align}
	The law of $(F_1,F_2)$ is absolutely continuous with respect to Lebesgue measure if and only if $DF_1(\omega)$ and $DF_2(\omega)$ are linearly independent in $\mathfrak{H}\oplus\mathfrak{H}$ for a.s. $\omega$, see \cite[Theorem 3.1]{NNP2013}. (The absolute continuity of the law is also equivalent to the linear independence of $u_{p,q}(\k,\j)$ and $v_{p,q}(\k,\j)$ in $\left( \mathfrak{H}\oplus\mathfrak{H}\right) ^{\odot (p+q)}$ by \cite[Theorem 3]{NT2017}. However, it is easier to verify the linearly independence in $\mathfrak{H}\oplus\mathfrak{H}$ than in $\left( \mathfrak{H}\oplus\mathfrak{H}\right) ^{\odot (p+q)}$.) Therefore, it suffices to show that $DF_1(\omega)$ and $DF_2(\omega)$ are linearly independent in $\mathfrak{H}\oplus\mathfrak{H}$ for a.s. $\omega$ if and only if $p\neq q$ or if $p=q$ and there exists $1\leq l\leq p$ such that $k_l\neq j_l$.

	Necessity: Otherwise, we have $p=q$ and for any $1\leq l\leq p$, $k_l=j_l$. Fix $\omega$, 
	\begin{align}
		DF_2(\omega) =& \sum_{l=1}^{p}I_{2p-1}\left(v_{p-1,p}(\widehat{\k}_l,\k)+v_{p,p-1}(\k,\widehat{\k}_l)\right)u_{1,0}(k_l)\\
		&+\sum_{l=1}^{p}I_{2p-1}\left(u_{p-1,p}(\widehat{\k}_l,\k)-u_{p,p-1}(\k,\widehat{\k}_l)\right) v_{1,0}(k_l)\\
		=&0,
	\end{align}
	which contradicts the assumption that for a.s. $\omega$, $DF_1(\omega)$ and $DF_2(\omega)$ are linearly independent in $\mathfrak{H}\oplus\mathfrak{H}$.
	
	Sufficiency: Observing 
	whether the coefficients of orthogonal elements $$\left\{ u_{1,0}(k_l),u_{1,0}(j_r),v_{1,0}(k_l),v_{1,0}(j_r)\right\} _{1\leq l\leq p, 1\leq r\leq q}\subseteq\mathfrak{H}\oplus\mathfrak{H}$$ in \eqref{DF1} and \eqref{DF2} can cancel out, we have that $DF_1(\omega)\neq0$ for a.s. $\omega$. Moreover, $DF_2(\omega)\neq0$ for a.s. $\omega$ if $p\neq q$ or if $p=q$ and there exists $1\leq l\leq p$ such that $k_l\neq j_l$. Fix $\omega$, combining the orthogonality of $\left\{ u_{1,0}(k_l),u_{1,0}(j_r),v_{1,0}(k_l),v_{1,0}(j_r)\right\} _{1\leq l\leq p, 1\leq r\leq q}$ and its  coefficient vectors are orthogonal in $\mathbb{R}^{2(p+q)}$, we get the conclusion.
	
\end{proof}

\subsection{Back to It\^o's theory}\label{Section3.3}

Assume that the underlying Hilbert space $\mathfrak{H}=L^{2}(T, \mathcal{B}, \mu)$, where $(T, \mathcal{B})$ is a measurable space and $\mu$ is a $\sigma$-finite measure without atoms. Then the linear mapping $\I_{p, q}$ coincides with a multiple Wiener-It\^o integrals defined by It\^o in \cite{ito1952complex}. According to \cite[Theorem 3.1]{ito1952complex}, there exists a continuous complex normal random measure $\mathbf{M}=\{M(B): B \in \mathcal{B}, \mu(B)<\infty\}$ on $(T, \mathcal{B})$, such that, for every $B, C \in \mathcal{B}$ with finite measure,
\begin{equation}
	\mathbb{E}[M(B) \overline{M(C)}]=\mu(B \cap C).
\end{equation}
For the off-diagonal simple function $f \in \mathfrak{H}_{\mathbb{C}}^{\otimes p} \otimes \mathfrak{H}_{\mathbb{C}}^{\otimes q}$ of the form
\begin{equation}
	f\left(t_{1}, \ldots, t_{p}, s_{1}, \ldots, s_{q}\right)=\sum a_{i_{1} \cdots i_{p} j_{1} \cdots j_{q}} \mathbf{1}_{B_{i_{1}} \times \cdots \times B_{i_{p}} \times B_{j_{1}} \times \cdots \times B_{j_{q}}},
\end{equation}
where $\mathbf{1}_{A}$ is the indicator function of a set $A$,
It\^o defined the multiple integral $\I_{p, q}(f)$ as
\begin{equation}
	\I_{p, q}(f)=\sum a_{i_{1} \cdots i_{p} j_{1} \cdots j_{q}} M\left(B_{i_{1}}\right) \cdots M\left(B_{i_{p}}\right) \overline{M\left(B_{j_{1}}\right)} \cdots \overline{M\left(B_{j_{q}}\right)}.
\end{equation}
Then by density argument, It\^o extended the multiple integrals to any $f \in \mathfrak{H}_{\mathbb{C}}^{\otimes p} \otimes$ $\mathfrak{H}_{\mathbb{C}}^{\otimes q}$ as follows,
\begin{equation}
	\I_{p, q}(f)=\int \cdots \int f\left(t_{1}, \ldots, t_{p}; s_{1} \ldots, s_{q}\right) \mathrm{d} M\left(t_{1}\right) \cdots \mathrm{d} M\left(t_{p}\right) \overline{\mathrm{d} M\left(s_{1}\right)} \cdots \overline{\mathrm{d} M\left(s_{q}\right)}.	
\end{equation}

Specifically, let $\mathfrak{H}=L^2(\mathbb{R}^{+})$ and  $\left(B_{1}(t), B_{2}(t)\right)_{t\geq0}$ be a two-dimensional Brownian motion. $\zeta_{t}:=\frac{B_{1}(t)+\mathrm{i} B_{2}(t)}{\sqrt{2}}$ is called complex Brownian motion. We can extend $  (\mathbf{1}_{[0,t_1]},\mathbf{1}_{[0,t_2]})\mapsto B_{1}(t_1)+B_{2}(t_2)  $ with $t_1,t_2\geq0$ to a real isonormal Gaussian process denoted by $\widehat{M}$ over $\mathfrak{H}\oplus\mathfrak{H}$, namely the stochastic integral with respect to two-dimensional Brownian motion.

For $0\leq r\leq p+q$, by the definition of It\^o's iterated integral, we know that 
\begin{align}\label{iterated}
	I_{d_{p+q-r}}
	:=&I_{p+q} \left( u_{1,0}(1)^{\otimes (p+q-r)}\otimes v_{1,0}(1)^{\otimes r}\right)\\
	=&\frac{(p+q)!}{2^{(p+q)/2}} \sum_{i_{1}, \ldots, i_{p+q}=1}^{2} (-1)^{d_{p+q-r} }\int_{0}^{\infty} \int_{0}^{t_{p+q}} \cdots \int_{0}^{t_{2}} e_1^{i_1}  \left(t_{1}\right)\\&
	\cdots e_1^{i_{p+q-r}}\left(t_{p+q-r}\right) e_1^{i_{p+q-r+1}}\left(t_{p+q-r+1}\right) \cdots e_{1}^{i_{p+q}}\left(t_{p+q}\right)\mathrm{d} B_{i_{1}}\left(t_{1}\right) \\&
	\cdots \mathrm{d} B_{i_{p+q-r}}\left(t_{p+q-r}\right) \mathrm{d} B_{3-i_{p+q-r+1}}\left(t_{p+q-r+1}\right)  \cdots \mathrm{d} B_{3-i_{p+q}}\left(t_{p+q}\right),
\end{align}	
where $I_{p+q}(\cdot)$ is $(p+q)$-th Wiener-It\^o integral with respect to $\widehat{M}$ and $d_{p+q-r}$ denotes the number of $j\in\left\lbrace 1,\ldots, p+q-r \right\rbrace $ such that $i_j=2$ and $d_{0}=0$ for $r=p+q$ .

Then using \eqref{kernel}, \eqref{expand} and \eqref{iterated}, we can express $\I_{p, q}\left(e_1^{\otimes p}\otimes\overline{e}_1^{\otimes q}\right)$ as It\^o's iterated integral
\begin{align}
	\I_{p, q}\left(e_1^{\otimes p}\otimes\overline{e}_1^{\otimes q}\right)&=\sum_{j=0}^p\sum_{l=0}^{q-1} \binom{p}{j}\binom{q-1}{l}\left( I_{p+q} \left( f_{p,q}(j,l)\right)+\i I_{p+q} \left(g_{p,q}(j,l)\right)\right)  \\
	&=\sum_{j=0}^p\sum_{l=0}^{q-1} \binom{p}{j}\binom{q-1}{l}\i^{j+l}(-1)^{l}	( I_{d_{p+q-(j+l)}}-\i I_{d_{p+q-(j+l+1)}}).
\end{align}
For example, take $p=q=1$, we get that
\begin{align}
	&\I_{1, 1}\left(e_1\otimes\overline{e}_1\right)=I_{d_{2}}+ I_{d_{0}} +\i \left( -I_{d_{1}}+I_{d_{1}}\right)\\
	=&\frac{2!}{2} \sum_{i_{1}, i_{2}=1}^{2} (-1)^{d_{2} }\int_{0}^{\infty} \int_{0}^{t_{2}} 
	e_1^{i_1}\left(t_{1}\right)  e_{1}^{i_2}\left(t_{2}\right)
	\mathrm{d} B_{i_{1}}\left(t_{1}\right) \cdots \mathrm{d} B_{i_{2}}\left(t_{2}\right)\\
	&+\frac{2!}{2} \sum_{i_{1}, i_{2}=1}^{2} (-1)^{0 }\int_{0}^{\infty} \int_{0}^{t_{2}} 
	e_1^{i_1}\left(t_{1}\right)  e_{1}^{i_2}\left(t_{2}\right)
	\mathrm{d} B_{3-i_{1}}\left(t_{1}\right)  \mathrm{d} B_{3-i_{2}}\left(t_{2}\right)\\
	=& \int_{0}^{\infty} \int_{0}^{t_{2}} 
	e_1^{1}\left(t_{1}\right)  e_{1}^{1}\left(t_{2}\right)
	\mathrm{d} B_{1}\left(t_{1}\right) \mathrm{d} B_{1}\left(t_{2}\right)+\int_{0}^{\infty} \int_{0}^{t_{2}} 
	e_1^{2}\left(t_{1}\right)  e_{1}^{2}\left(t_{2}\right)
	\mathrm{d} B_{2}\left(t_{1}\right) \mathrm{d} B_{2}\left(t_{2}\right)\\
	& -\int_{0}^{\infty} \int_{0}^{t_{2}} 
	e_1^{1}\left(t_{1}\right)  e_{1}^{2}\left(t_{2}\right)
	\mathrm{d} B_{1}\left(t_{1}\right) \mathrm{d} B_{2}\left(t_{2}\right)- \int_{0}^{\infty} \int_{0}^{t_{2}} 
	e_1^{2}\left(t_{1}\right)  e_{1}^{1}\left(t_{2}\right)
	\mathrm{d} B_{2}\left(t_{1}\right) \mathrm{d} B_{1}\left(t_{2}\right)\\
	&+ \int_{0}^{\infty} \int_{0}^{t_{2}} 
	e_1^{1}\left(t_{1}\right)  e_{1}^{1}\left(t_{2}\right)
	\mathrm{d} B_{2}\left(t_{1}\right) \mathrm{d} B_{2}\left(t_{2}\right)+\int_{0}^{\infty} \int_{0}^{t_{2}} 
	e_1^{2}\left(t_{1}\right)  e_{1}^{2}\left(t_{2}\right)
	\mathrm{d} B_{1}\left(t_{1}\right) \mathrm{d} B_{1}\left(t_{2}\right)\\
	&+\int_{0}^{\infty} \int_{0}^{t_{2}} 
	e_1^{1}\left(t_{1}\right)  e_{1}^{2}\left(t_{2}\right)
	\mathrm{d} B_{2}\left(t_{1}\right) \mathrm{d} B_{1}\left(t_{2}\right)+ \int_{0}^{\infty} \int_{0}^{t_{2}} 
	e_1^{2}\left(t_{1}\right)  e_{1}^{1}\left(t_{2}\right)
	\mathrm{d} B_{1}\left(t_{1}\right) \mathrm{d} B_{2}\left(t_{2}\right).
\end{align}

\subsection{Generalized Stroock's formula}\label{Section3.4}	

See \eqref{Gaussian W} for the definition of the isonormal Gaussian process $W$ over $\mathfrak{H}\oplus\mathfrak{H}$. The definitions of complex Malliavin derivative operators $\D$ and $\bar{\D}$, real Malliavin derivative operator $D$ with respect to $W$ are introduced in Section \ref{Section2.3}. Note that $DF\in L^p(\Omega;\mathfrak{H}\oplus\mathfrak{H})$ for $F\in\mathbb{D}^{1,p}$, we denote $DF$ by $\left( D_1F,D_2F\right) $. In particular, for \begin{equation}\label{real smooth r.v.}
	F=f(W(h_1,f_1),\dots,W(h_n,f_n)),
\end{equation}
where $(h_1,f_1),\dots,(h_n,f_n)\in\mathfrak{H}\oplus\mathfrak{H}$, $n\geq1$ and $f\in C_p^{\infty}(\mathbb{R}^n)$, $D_1F$ and $D_2F$ are $\mathfrak{H}$-valued random elements given by 
\begin{align}
	D_1F&=\sum_{i=1}^{n}\frac{\partial f}{\partial x_i}\left(W(h_1,f_1),\dots,W(h_n,f_n)\right)h_i,\\
	D_2F&=\sum_{i=1}^{n}\frac{\partial f}{\partial x_i}\left(W(h_1,f_1),\dots,W(h_n,f_n)\right)f_i.
\end{align}
For any operator $(A_1,A_2)$ and $(A_3,A_4)$, we define the tensor product of $(A_1,A_2)$ and $(A_3,A_4)$ as
\begin{equation}
	\begin{pmatrix}
		A_1,A_2
	\end{pmatrix}
	\otimes 
	\begin{pmatrix}
		A_3,A_4
	\end{pmatrix}=
	\begin{pmatrix}
		A_1\\A_2
	\end{pmatrix}
	\otimes 
	\begin{pmatrix}
		A_3\\A_4
	\end{pmatrix}=
	\left( A_1A_3,A_2A_3,A_1A_4,A_2A_4 \right)  ^T,
\end{equation}
where $(\cdot)^T$ denotes the transposition of a matrix or a vector. Note that for $k\geq2$ and $F\in\mathbb{D}^{k,p}$, $D^kF=\left( D_1,D_2\right)^{\otimes k}F\in L^p(\Omega;\left( \mathfrak{H}\oplus\mathfrak{H}\right) ^{\otimes k})$.

The following Lemma establishes the relation between the real Malliavin derivative operator $D=(D_1,D_2)$ and the complex Malliavin derivative operators $\D$, $\bar{\D}$.
\begin{Lemma}\label{Lemma relation r and c derivative}
	\begin{equation}\label{relation r and c derivative}
		\D=\frac{D_1-\i D_2}{\sqrt2 },\quad
		\bar{\D}= \frac{D_1+\i D_2}{\sqrt2 }.
	\end{equation}
\end{Lemma}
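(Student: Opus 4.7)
The plan is to verify the identity first on the space $\mathcal{S}_Z$ of complex smooth random variables by a direct chain-rule computation, and then extend to the full domain of $\D$ and $\bar{\D}$ by closability. Both operators, once restricted to $\mathcal{S}_Z$, are defined by explicit differential expressions in terms of Wirtinger derivatives and partial derivatives respectively, so the identity reduces to an algebraic manipulation at the level of smooth variables.

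The preparatory step is to express $Z(\mathfrak{h})$ in terms of $W$ for a general $\mathfrak{h}=f+\i g\in\mathfrak{H}_{\mathbb{C}}$. Repeating the computation of Remark \ref{expand z} verbatim, one obtains
\begin{equation}
Z(\mathfrak{h}) \;=\; W\!\left(\tfrac{1}{\sqrt 2}(f,-g)\right) \;+\;\i\,W\!\left(\tfrac{1}{\sqrt 2}(g,f)\right)
\;=:\;W(u_\mathfrak{h})+\i W(v_\mathfrak{h}),
\end{equation}
with $u_\mathfrak{h},v_\mathfrak{h}\in\mathfrak{H}\oplus\mathfrak{H}$. Given $G=g(Z(\mathfrak{h}_1),\ldots,Z(\mathfrak{h}_m))\in\mathcal{S}_Z$, I will write $z_j=x_j+\i y_j$ with $x_j=W(u_{\mathfrak{h}_j})$ and $y_j=W(v_{\mathfrak{h}_j})$, and view $G$ as a complex-valued real-smooth function of the $2m$ real Gaussian random variables $(x_1,y_1,\ldots,x_m,y_m)$. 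This is exactly the form \eqref{real smooth r.v.} (applied to the real and imaginary parts of $g$ separately), so the real Malliavin derivative decomposes through the chain rule as
\begin{align}
D_1 G &=\sum_{j=1}^{m}\tfrac{1}{\sqrt 2}\Bigl[\tfrac{\partial g}{\partial x_j}\operatorname{Re}\mathfrak{h}_j+\tfrac{\partial g}{\partial y_j}\operatorname{Im}\mathfrak{h}_j\Bigr],\\
D_2 G &=\sum_{j=1}^{m}\tfrac{1}{\sqrt 2}\Bigl[-\tfrac{\partial g}{\partial x_j}\operatorname{Im}\mathfrak{h}_j+\tfrac{\partial g}{\partial y_j}\operatorname{Re}\mathfrak{h}_j\Bigr],
\end{align}
using that the second components of $u_{\mathfrak{h}_j}$ and $v_{\mathfrak{h}_j}$ are $-\tfrac{1}{\sqrt 2}\operatorname{Im}\mathfrak{h}_j$ and $\tfrac{1}{\sqrt 2}\operatorname{Re}\mathfrak{h}_j$ respectively.

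From here the two identities follow by inspection. Forming $\frac{D_1-\i D_2}{\sqrt 2}G$ and collecting the coefficient of each $\frac{\partial g}{\partial x_j}$ and $\frac{\partial g}{\partial y_j}$, the bracketed expressions telescope to $\mathfrak{h}_j$ and $-\i \mathfrak{h}_j$, yielding
\begin{equation}
\tfrac{1}{\sqrt 2}(D_1-\i D_2)G=\sum_{j=1}^{m}\tfrac{1}{2}\Bigl(\tfrac{\partial g}{\partial x_j}-\i\tfrac{\partial g}{\partial y_j}\Bigr)\mathfrak{h}_j=\sum_{j=1}^{m}\partial_j g\cdot\mathfrak{h}_j=\D G,
\end{equation}
by the Wirtinger definition \eqref{definition of complex derivative}. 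The analogous manipulation with $D_1+\i D_2$ produces $\bar{\mathfrak{h}}_j$ in place of $\mathfrak{h}_j$ and yields $\bar{\D}G$.

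The main technical point, rather than an obstacle, is to be explicit about the fact that once $G$ is complex-valued, $D_1G$ and $D_2G$ are naturally $\mathfrak{H}_{\mathbb{C}}$-valued (act on real and imaginary parts separately), so that the combination $D_1G\pm\i D_2G$ makes sense in $L^2_{\mathbb{C}}(\Omega;\mathfrak{H}_{\mathbb{C}})$. Once the identity is proved on $\mathcal{S}_Z$, closability of $D$, $\D$ and $\bar{\D}$ together with the density of $\mathcal{S}_Z$ in the relevant Sobolev spaces $\mathbb{D}^{1,2}$ and $\mathscr{D}^{1,2}\cap\bar{\mathscr{D}}^{1,2}$ (as recorded in Section \ref{Section2.3}) extends \eqref{relation r and c derivative} to the full common domain.
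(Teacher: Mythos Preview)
Your proof is correct and follows essentially the same route as the paper: express $Z(\mathfrak{h})$ via $W$ using the computation of Remark~\ref{expand z}, rewrite $G\in\mathcal{S}_Z$ as a smooth functional of $W$, and compare the definitions of $D=(D_1,D_2)$ and $\D,\bar{\D}$. The paper's proof merely states this outline and omits the explicit chain-rule verification and the closability extension that you spell out, so your version is a strictly more detailed rendering of the same argument.
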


\begin{proof}
	By Remark \ref{expand z}, for a smooth random variable $G$ with the form \eqref{complex smooth r.v.}, $G$ can be expressed as 
	\begin{align}
		G&=g\left(Z\left(\mathfrak{h}_{1}\right), \cdots, Z\left(\mathfrak{h}_{m}\right)\right)\\
		&=g_1\left(\frac{W\left(h_1^1, -h_1^2  \right)}{\sqrt2}, \frac{W\left( h_1^2,h_1^1 \right)}{\sqrt2},  \cdots, \frac{W\left( h_m^1, -h_m^2 \right)}{\sqrt2}, \frac{W\left( h_m^2,h_m^1 \right)}{\sqrt2}\right)\\&\quad+\i g_2\left(\frac{W\left(h_1^1, -h_1^2  \right)}{\sqrt2}, \frac{W\left( h_1^2,h_1^1 \right)}{\sqrt2},  \cdots, \frac{W\left( h_m^1, -h_m^2 \right)}{\sqrt2}, \frac{W\left( h_m^2,h_m^1 \right)}{\sqrt2}\right),
	\end{align}
	where $\mathfrak{h}_j=h_j^1+\i h_j^2\in\mathfrak{H}_{\mathbb{C}}$ with $1\leq j \leq m$ and $g_1, g_2\in C_p^{\infty}(\mathbb{R}^{2m})$. Then by the definitions of the complex Malliavin derivative operators $\D$, $\bar{\D}$ and real Malliavin derivative operator $D=(D_1,D_2)$, we get the conclusion.
\end{proof}

\begin{Rem}
	\eqref{relation r and c derivative} is entirely analogous to the definition of Wirtinger derivatives
	\begin{equation}
		\frac{\partial}{\partial z}=\frac{1}{ 2}\left( \frac{\partial}{\partial x}-\i \frac{\partial}{\partial y}\right),\quad  \frac{\partial}{\partial \overline{z}}=\frac{1}{ 2}\left( \frac{\partial}{\partial x}+\i \frac{\partial}{\partial y}\right),
	\end{equation}
	for a complex number $z=x+\i y$ with $x,y \in\mathbb{R}$.
\end{Rem}

For $F\in L_{\mathbb{C}}^2(\Omega, \sigma(Z), P)$, by chaos decomposition \eqref{complex chaos decomposition} and Theorem \ref{Thm 1}, we have
\begin{equation}
	F=\sum_{p=0}^{\infty}\sum_{q=0}^{\infty}\I_{p, q}(f_{p,q})=\sum_{p=0}^{\infty}\sum_{q=0}^{\infty}I_{p+ q}(u_{p,q})+\i\sum_{p=0}^{\infty}\sum_{q=0}^{\infty}I_{p+ q}(v_{p,q}),
\end{equation}
where $I_{p+q}(\cdot)$ is the real $(p+q)$-th Wiener-It\^{o} integral with respect to $W$ and $u_{p,q},v_{p,q}\in(\mathfrak{H}\oplus\mathfrak{H})^{\odot (p+q)}$ are the kernels of the real and imaginary parts of $\I_{p, q}(f_{p,q})$. Thus, $\mathrm{Re}\,F$ and $\mathrm{Im}\,F$ can be uniquely expanded into series of $I_{n}(\cdot)$ as follows
\begin{align}
	\mathrm{Re}\,F&=\sum_{n=0}^{\infty}I_{n}\left( f_n\right),\quad f_n=\sum_{p=0}^{n}u_{p,n-p},\\
	\mathrm{Im}\,F&= \sum_{n=0}^{\infty}I_{n}\left( g_n\right),\quad g_n=\sum_{p=0}^{n}v_{p,n-p}.
\end{align} 	
Combining \eqref{relation r and c derivative} and Stroock's formula \eqref{real Stroock} for real Wiener-It\^o integrals, we obtain the computable expressions of $f_n$ and $g_n$, which can be considered as a generalized Stroock's formula.

\begin{Prop}[Generalized Stroock's formula]\label{generalized Stroock}
	If $F\in \mathscr{D}^{m, 2} \cap \bar{\mathscr{D}}^{m, 2}$ for some $m\geq n$ with the expansions of $\mathrm{Re}F=\sum_{n=0}^{\infty}I_{n}\left( f_n\right)$ and $
	\mathrm{Im}F= \sum_{n=0}^{\infty}I_{n}\left( g_n\right)$, where $f_n,g_n\in \left( \mathfrak{H}\oplus\mathfrak{H}\right)^{\odot n} $ and $I_{n}(\cdot)$ is the real $n$-th Wiener-It\^{o} integral with respect to $W$. Then $f_n$ and $g_n$ are uniquely defined as 
	\begin{equation}
		f_n+\i g_n=\frac{1}{n!}2^{-\frac{n}{2}}\mathbb{E}\left[ \left( \D +\bar{\D},\i\left( \D -\bar{\D}\right) \right) ^{\otimes n} F\right].
	\end{equation}
\end{Prop}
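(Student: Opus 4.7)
The plan is to reduce the claimed complex identity to the classical real Stroock formula \eqref{real Stroock} applied separately to $\mathrm{Re}\,F$ and $\mathrm{Im}\,F$, and then to re-express the result in terms of $\D,\bar{\D}$ by inverting the linear relations in Lemma \ref{Lemma relation r and c derivative}. Specifically, from \eqref{relation r and c derivative} I would first derive
\begin{equation}
D_1=\frac{\D+\bar{\D}}{\sqrt{2}},\qquad D_2=\i\,\frac{\D-\bar{\D}}{\sqrt{2}},
\end{equation}
so that $(D_1,D_2)=\tfrac{1}{\sqrt{2}}(\D+\bar{\D},\,\i(\D-\bar{\D}))$.

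Next I would verify that the hypothesis $F\in\mathscr{D}^{m,2}\cap\bar{\mathscr{D}}^{m,2}$ with $m\geq n$ is enough to apply the real Stroock formula to $\mathrm{Re}\,F$ and $\mathrm{Im}\,F$: the identities above show that each iterated real derivative $D_{i_1}\cdots D_{i_n}$ is a linear combination of iterated complex derivatives $\D^{p}\bar{\D}^{q}$ with $p+q=n\leq m$, so the real Sobolev seminorm $\|\cdot\|_{n,2}$ of $\mathrm{Re}\,F$ and $\mathrm{Im}\,F$ is controlled by $\|F\|_{m,m,2}$. Hence $\mathrm{Re}\,F,\mathrm{Im}\,F\in\mathbb{D}^{n,2}$, and \eqref{real Stroock} yields
\begin{equation}
f_n=\tfrac{1}{n!}\mathbb{E}[D^{n}\mathrm{Re}\,F],\qquad g_n=\tfrac{1}{n!}\mathbb{E}[D^{n}\mathrm{Im}\,F].
\end{equation}
Uniqueness of $f_n$ and $g_n$ is then inherited from the uniqueness of the real chaos decomposition for $W$.

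Finally, extending $D^{n}$ complex-linearly to $L^{2}_{\mathbb{C}}(\Omega)$ by $D^{n}G:=D^{n}\mathrm{Re}\,G+\i D^{n}\mathrm{Im}\,G$, I would combine the two formulas into $f_n+\i g_n=\tfrac{1}{n!}\mathbb{E}[D^{n}F]$ and substitute $(D_1,D_2)=\tfrac{1}{\sqrt{2}}(\D+\bar{\D},\,\i(\D-\bar{\D}))$ into $D^{n}F=(D_1,D_2)^{\otimes n}F$. Bilinearity of the operator tensor product defined just before Lemma \ref{Lemma relation r and c derivative} distributes one factor $2^{-1/2}$ out of each of the $n$ slots, yielding the promised prefactor $2^{-n/2}$ and the tensor $(\D+\bar{\D},\,\i(\D-\bar{\D}))^{\otimes n}$. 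The main obstacle is bookkeeping rather than analysis: checking that the tensor-product convention of Section \ref{Section3.4} is compatible with complex-scalar multiplication in each slot and that the complex-linear extension of $D$ commutes with expectation, so that the two separate real Stroock formulae can be assembled consistently into one complex identity.
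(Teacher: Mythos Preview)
Your proposal is correct and follows essentially the same route as the paper's proof: apply the real Stroock formula \eqref{real Stroock} to $\mathrm{Re}\,F$ and $\mathrm{Im}\,F$ separately, then substitute $(D_1,D_2)=2^{-1/2}(\D+\bar{\D},\,\i(\D-\bar{\D}))$ from Lemma \ref{Lemma relation r and c derivative} and combine. If anything, you are slightly more careful than the paper in justifying why the hypothesis $F\in\mathscr{D}^{m,2}\cap\bar{\mathscr{D}}^{m,2}$ implies $\mathrm{Re}\,F,\mathrm{Im}\,F\in\mathbb{D}^{n,2}$, a point the paper leaves implicit.
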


\begin{proof}
	By the Stroock's formula \eqref{real Stroock} and \eqref{relation r and c derivative}, for $n\leq m$, we get
	\begin{align}
		f_n&=\frac{1}{n!}\mathbb{E}\left[ D^n \mathrm{Re}F\right] =\frac{1}{n!}\mathbb{E}\left[ (D^1, D^2)^{\otimes n} \mathrm{Re}F\right]\\
		&=\frac{1}{n!}2^{-\frac{n}{2}}\mathbb{E}\left[ \left( \D +\bar{\D},\i\left( \D -\bar{\D}\right) \right) ^{\otimes n} \mathrm{Re}F \right].
	\end{align}
	By a similar argument, we obtain that
	\begin{equation}
		g_n=\frac{1}{n!}2^{-\frac{n}{2}}\mathbb{E}\left[ \left(\D+\bar{\D},\i\left( \D-\bar{\D}\right)  \right) ^{\otimes n}\mathrm{Im}F\right] .
	\end{equation}
	Then we get the conclusion.
\end{proof}

In Theorem \ref{Thm 1}, we establish a method to get the kernels of the real and imaginary parts of a complex Wiener-It\^o integral by given complete and orthogonal elements in $\mathfrak{H}_{\mathbb{C}}$. However, it is difficult to get explicit expressions for the kernels when the basis of $\mathfrak{H}_{\mathbb{C}}$ cannot be determined. With Proposition \ref{generalized Stroock}, we overcome this difficulty and obtain another expressions of the kernels in terms of complex Malliavin derivative operators $\D$ and $\bar{\D}$.

\begin{Coro}\label{1 generalized Stroock}
	$F=\I_{p,q}(f)$ with $f\in\mathfrak{H}_{\mathbb{C}}^{\odot p}\otimes\mathfrak{H}_{\mathbb{C}}^{\odot q}$ admits the unique representation
	\begin{equation*}
		\I_{p,q}(f) = I_{p+q}(u)+\i I_{p+q}( v),
	\end{equation*}  where $u,v\in(\mathfrak{H}\oplus\mathfrak{H})^{\odot (p+q)}$ are defined as 
	\begin{equation}
		u+\i v =\frac{1}{(p+q)!}2^{-\frac{p+q}{2}}\left( \D +\bar{\D},\i\left( \D -\bar{\D}\right) \right) ^{\otimes (p+q)}  F ,
	\end{equation}
	and $I_{p+q}(\cdot)$ is the real $(p+q)$-th Wiener-It\^{o} integral with respect to $W$.  
\end{Coro}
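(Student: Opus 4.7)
The plan is to combine Theorem \ref{Thm 1}, Proposition \ref{generalized Stroock}, and Lemma \ref{Lemma relation r and c derivative}, exploiting the fact that for a single-chaos random variable the top-order Malliavin derivative is already deterministic, so the expectation appearing in Proposition \ref{generalized Stroock} becomes redundant.

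First, I would invoke Theorem \ref{Thm 1} applied to $F=\I_{p,q}(f)$ to obtain the existence and uniqueness of $u,v\in(\mathfrak{H}\oplus\mathfrak{H})^{\odot(p+q)}$ with $F=I_{p+q}(u)+\i I_{p+q}(v)$. Consequently $\mathrm{Re}\,F=I_{p+q}(u)$ and $\mathrm{Im}\,F=I_{p+q}(v)$ both sit in the single real Wiener-It\^o chaos $\mathcal{H}_{p+q}(W)$, so all other chaos components in the real Wiener-It\^o decomposition vanish. This reduces the task to identifying $u$ and $v$ explicitly in terms of complex Malliavin derivatives of $F$.

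The key observation is that for a pure chaos element $G=I_n(g)\in\mathcal{H}_n(W)$ one has $D^n G=n!\,g$, which is already deterministic. Applied to $\mathrm{Re}\,F$ and $\mathrm{Im}\,F$ with $n=p+q$ this gives $u=\tfrac{1}{(p+q)!}D^{p+q}\mathrm{Re}\,F$ and $v=\tfrac{1}{(p+q)!}D^{p+q}\mathrm{Im}\,F$, hence by $\mathbb{R}$-linearity of the real derivative operator
\begin{equation*}
    u+\i v=\frac{1}{(p+q)!}D^{p+q}F,
\end{equation*}
with no expectation needed. Equivalently, this is the conclusion of Proposition \ref{generalized Stroock} with the expectation stripped off, which is legitimate precisely because both $D^{p+q}\mathrm{Re}\,F$ and $D^{p+q}\mathrm{Im}\,F$ are deterministic.

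Finally, I would invoke Lemma \ref{Lemma relation r and c derivative} rewritten as $D=(D_1,D_2)=2^{-1/2}(\D+\bar{\D},\i(\D-\bar{\D}))$, and take the $(p+q)$-th tensor power using the operator tensor-product convention introduced just before the lemma. Since scalars pull out of each tensor factor, the prefactor accumulates to $2^{-(p+q)/2}$, producing exactly the claimed expression. The only step that requires a line of verification is the compatibility of the tensor-product notation for operators with this scalar substitution; this is a direct unpacking of the definition of $(A_1,A_2)\otimes(A_3,A_4)$ and presents no real obstacle, so I do not expect any serious technical hurdle in carrying out the argument.
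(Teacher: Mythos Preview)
Your proposal is correct and follows essentially the same route as the paper: the corollary is presented there as an immediate consequence of Proposition \ref{generalized Stroock} combined with the single-chaos representation from Theorem \ref{Thm 1}, and your argument makes explicit precisely the point that for $F=\I_{p,q}(f)$ the $(p+q)$-th real Malliavin derivative is deterministic, so the expectation in Proposition \ref{generalized Stroock} may be dropped. The substitution via Lemma \ref{Lemma relation r and c derivative} and the tensor-power bookkeeping are exactly as intended.
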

By Corollary \ref{1 generalized Stroock}, given a complex Wiener-It\^o integral, we can consider it as a two-dimensional random vector whose components are real Wiener-It\^o integrals and expressions for the kernels are explicit. Then 
the asymptotic normality of it can be proved by utilizing the multidimensional version of fourth moment theorem (see \cite[Theorem 5.2.7, Theorem 6.2.3]{nourdin2012normal}). (With \cite[Theorem 3.3]{chen2017fourth}, we can not implement this method since the kernels of real and imaginary parts are unclear.)

\begin{Ex}\label{ex}
	In \cite{ChenHuWang2017}, Chen, Hu and Wang considered the least squares estimator $\hat{\gamma}_T$ of the drift coefficient $\gamma$ for the complex-valued Ornstein-Uhlenbeck processes disturbed by fractional noise, and get the strong consistency and the asymptotic normality of $\hat{\gamma}_T$. The numerator $F_T$ of the statistic $\sqrt{T}(\hat{\gamma}_T-\gamma)$ is a $(1,1)$-th complex Wiener-It\^o integral with respect to a complex fractional Brownian motion with Hurst parameter $H\in\left(\frac{1}{2},\frac{3}{4}\right)$. Namely, assume that  $\gamma \in\mathbb{C}$ is unknown,
	\begin{align}
		F_T&=\I_{1,1}(\psi_{T}(t,s)),\quad \psi_{T}(t,s)=\frac{1}{\sqrt{T}}e^{-\bar{\gamma}(t-s)}\mathbf{1}_{\left\lbrace 0\leq s\leq t\leq T\right\rbrace} ,\\
		\overline{F}_T&=\I_{1,1}(\phi_{T}(t,s)),\quad \phi_{T}(t,s)=\frac{1}{\sqrt{T}}e^{-\gamma(s-t)}\mathbf{1}_{\left\lbrace 0\leq t\leq s\leq T\right\rbrace}.
	\end{align}
	The underlying Hilbert space is defined as 
	\begin{equation}
		\mathfrak{H}:=\left\{f|f: \mathbb{R}_{+} \rightarrow \mathbb{R}, \| f\|_{\mathfrak{H}} ^{2}:=\int_{0}^{\infty} \int_{0}^{\infty} f(s) f(t) \varphi(s, t) \mathrm{d} s \mathrm{d} t<\infty\right\},
	\end{equation}
	with $\alpha_H=H(2H-1)$, $\varphi(s, t)=\alpha_{H}|s-t|^{2 H-2}$ and inner product $$\left\langle  f,g\right\rangle_{\mathfrak{H}}= \int_{0}^{\infty} \int_{0}^{\infty} f(s) g(t) \varphi(s, t) \mathrm{d} s \mathrm{d} t.$$ We complexify $\mathfrak{H}$ in the usual way and denote by $\mathfrak{H}_{\mathbb{C}}$. 
	In \cite{ChenHuWang2017}, in order to show the asymptotic normality of $F_T$, the authors firstly established some equivalent conditions for the complex fourth moment theorem (see \cite[Theorem 1.3]{ChenHuWang2017}), and then made use of results obtained, calculated accurately 
	\begin{equation}
		\lim\limits_{T\rightarrow\infty}\mathbb{E}\left[ \left|F_T \right| ^2\right],\quad \lim\limits_{T\rightarrow\infty}\mathbb{E}\left[ F_T  ^2\right],
	\end{equation}
	and verified the contraction condition $$\lim\limits_{T\rightarrow\infty}\left\|\psi_{T}\otimes_{0,1}\phi_{T}\right\| _{\mathfrak{H}_{\mathbb{C}}^{\otimes 2}}=0.$$
	Now, by Corollary \ref{1 generalized Stroock}, we get that 
	\begin{equation}
		F_T=F_{1,T}+\i F_{2,T}=I_2(u_T+\i v_T),
	\end{equation}
	where $u_T+\i v_T$ are defined as \eqref{u_T+iv_T} below. This means that we can regard $F_T$ as two-dimensional random vector $\left(F_{1,T}, F_{2,T} \right) $ and utilize the real fourth moment theorem (see \cite[Theorem 5.2.7, Theorem 6.2.3]{nourdin2012normal}) to prove the asymptotic normality of $F_T$. In this way, we need to calculate
	\begin{equation}
		\lim\limits_{T\rightarrow\infty}\mathbb{E}\left[ F_{i,T}F_{j,T}\right],\; i,j=1,2,
	\end{equation}
	and show that
	\begin{equation}
		\lim\limits_{T\rightarrow\infty}\left\| u_T\otimes_{1}u_T\right\| _{\left( \mathfrak{H}\oplus\mathfrak{H}\right)^{\otimes 2} }=0, \quad
		\lim\limits_{T\rightarrow\infty}\left\| v_T\otimes_{1}v_T\right\| _{\left( \mathfrak{H}\oplus\mathfrak{H}\right)^{\otimes 2} }=0.
	\end{equation}
	Next, we derive the expression of the kernel $u_T+\i v_T$ by Corollary \ref{1 generalized Stroock}. Calculating directly, we have
	\begin{align}
		\left(\D+\bar{\D},\i\left( \D-\bar{\D}\right)  \right) ^{\otimes 2}&=\begin{pmatrix}
			\D+\bar{\D}\\\i\left( \D-\bar{\D}\right) 
		\end{pmatrix}
		\otimes
		\begin{pmatrix}
			\D+\bar{\D}\\ \i\left( \D-\bar{\D}\right) 
		\end{pmatrix}\\&=
		\begin{pmatrix}
			\left( \D+\bar{\D} \right) \left( \D+\bar{\D} \right) \\
			\i \left(  \D-\bar{\D}\right) \left(  \D+\bar{\D}\right)\\
			\i \left( \D+\bar{\D} \right) \left(  \D-\bar{\D}\right) \\
			- \left( \D-\bar{\D}\right)\left( \D-\bar{\D} \right)  
		\end{pmatrix}=
		\begin{pmatrix}
			\D_2+\D\bar{\D}+\bar{\D}\D+\bar{\D}^2\\
			\i\left(\D_2+\D\bar{\D}-\bar{\D}\D-\bar{\D}^2\right)\\
			\i\left(\D_2-\D\bar{\D}+\bar{\D}\D-\bar{\D}^2\right)\\
			-\D_2+\D\bar{\D}+\bar{\D}\D-\bar{\D}^2
		\end{pmatrix}.
	\end{align}
	Then according to Corollary \ref{1 generalized Stroock} and the fact that $	\D_2F=\bar{\D}^2F=0$, we get 
	\begin{align}\label{u_T+iv_T}
		&u_T(t,s)+\i v_T(t,s) =\frac{1}{4}\left(\D+\bar{\D},\i\left( \D-\bar{\D}\right)  \right) ^{\otimes 2}F \\
		=&\frac{1}{4}\begin{pmatrix}
			\D\bar{\D}+\bar{\D}\D\\
			\i\left(\D\bar{\D}-\bar{\D}\D\right)\\
			\i\left(-\D\bar{\D}+\bar{\D}\D\right)\\
			\D\bar{\D}+\bar{\D}\D
		\end{pmatrix}F=\frac{1}{4}\begin{pmatrix}
			\psi(t,s)+\psi(s,t)\\
			\i\left(\psi(t,s)-\psi(s,t)\right)\\
			-\i\left(\psi(t,s)-\psi(s,t)\right)\\
			\psi(t,s)+\psi(s,t)
		\end{pmatrix}\\
		=&\frac{1}{4\sqrt{T}}\begin{pmatrix}
			e^{-\gamma |t-s|} \\
			-\i e^{-\gamma |t-s|}\left( \mathbf{1}_{\left\lbrace 0\leq t\leq s\leq T\right\rbrace}-\mathbf{1}_{\left\lbrace 0\leq s\leq t\leq T\right\rbrace}\right) \\
			\i e^{-\gamma |t-s|}\left( \mathbf{1}_{\left\lbrace 0\leq s\leq t\leq T\right\rbrace}-\mathbf{1}_{\left\lbrace 0\leq t\leq s\leq T\right\rbrace}\right)\\
			e^{-\gamma |t-s|}
		\end{pmatrix}.
	\end{align}		
	It is worth noting that the representation theorem (Theorem \ref{Thm 1} or Corollary \ref{1 generalized Stroock}) we established does provide a new method to solve problems concerning complex Wiener-It\^o integrals. The explicit expressions for kernels of real and imaginary parts enable us to do accurate calculations, although the difficulties to overcome and techniques used are similar in the above two methods.		
\end{Ex}	

\section{Kernel representation formula from real to complex Wiener-It\^o integrals}\label{Section4}
\subsection{Uniqueness theorem}\label{Section4.1}

In Section \ref{Section3}, we show how to get the kernels of real and complex parts of a complex multiple Wiener-It\^o integral. Conversely, in this section, we consider a complex random variable, whose real and imaginary parts are two real multiple Wiener-It\^o integrals of the same order, and prove that it can be uniquely expressed as a finite sum of complex Wiener-It\^o integrals.

Let	$\textbf{m}=\left\lbrace m_k \right\rbrace_{k=1}^{\infty},\textbf{n}=\left\lbrace n_k\right\rbrace_{k=1}^{\infty}\in\Lambda$ with $|\textbf{m}|+|\textbf{n}|=p$. 
Let $\textbf{p}=\left\lbrace p_k\right\rbrace _{k=1}^{\infty}:=\textbf{m}+\textbf{n}$, then $|\textbf{p}|=p$. For simplicity of presentation, define $$\tilde{a}_{k,j}:=\frac{\i ^{n_k}}{2^{p_k}}\sum_{r+s=j}\binom{m_k}{r}\binom{n_k}{s}(-1)^s.$$
For $\textbf{j}=\left\lbrace j_k \right\rbrace_{k=1}^{\infty}\in \Lambda$ with $\textbf{j}\leq \textbf{p}$, $\prod_{k=1}^{\infty}\tilde{a}_{k,j_k}$ is a complex number.	Define $g_{l,p-l}\left( \textbf{m},\textbf{n}\right) \in \mathfrak{H}_{\mathbb{C}}^{\odot l}\otimes\mathfrak{H}_{\mathbb{C}}^{\odot( p-l)}$ with $ 0\leq l\leq p  $ as 
\begin{align}
	g_{l,p-l}\left( \textbf{m},\textbf{n}\right)=\sum_{ \textbf{j}\leq \textbf{p},|\textbf{j}|=l}\left(\prod_{i=1}^{\infty}\tilde{a}_{i,j_i} \right) \mathrm{symm}\left( \otimes_{k=1}^{\infty}  e_k^{\otimes j_k}\right) \otimes \mathrm{symm}\left( \otimes_{k=1}^{\infty}   \overline{e}_k^{\otimes (p_k-j_k)} \right) .
\end{align}
Suppose $g_1, g_2\in\left( \mathfrak{H}\oplus\mathfrak{H}\right) ^{\odot p}$, then there exists a unique sequence $$\left\lbrace \tilde{c}(\textbf{m};\textbf{n}):\textbf{m},\textbf{n}\in\Lambda,|\textbf{m}|+|\textbf{n}|=p \right\rbrace \subset \mathbb{C}
$$ satisfying $\sum_{\textbf{m},\textbf{n}\in\Lambda,|\textbf{m}|+|\textbf{n}|=p}\left| \tilde{c}(\textbf{m};\textbf{n})\right| ^2<\infty$ such that 
\begin{equation}\label{expansion for real}
	g_1+\i g_2=\sum_{\textbf{m},\textbf{n}\in\Lambda,|\textbf{m}|+|\textbf{n}|=p} \tilde{c}(\textbf{m};\textbf{n})\mathrm{symm}\left( \otimes_{k=1}^{\infty}\left( u_{1,0}(k)^{\otimes m_k}  \otimes v_{1,0}(k)^{\otimes n_k}\right)\right) .
\end{equation}
\begin{Thm}\label{inverse First proof}
	Suppose $g_1, g_2\in\left( \mathfrak{H}\oplus\mathfrak{H}\right) ^{\odot p}$  and $g_1+\i g_2$ is given by \eqref{expansion for real}. Then $I_{p}\left(g_1\right) +\i I_{p}\left(g_2\right)$ admits the representation
	\begin{align}
		I_{p}\left(g_1\right) +\i I_{p}\left(g_2\right)=
		\sum_{l=0}^{p}\I_{l,p-l}\left(\tilde{ g}_{l,p-l}\right) ,
	\end{align}
	where $\tilde{ g}_{l,p-l}\in\mathfrak{H}_{\mathbb{C}}^{\odot l}\otimes\mathfrak{H}_{\mathbb{C}}^{\odot( p-l)}, 0\leq l\leq p$ are defined as 
	\begin{align}	
		\tilde{ g}_{l,p-l}=\sum_{\textbf{m},\textbf{n}\in\Lambda,|\textbf{m}|+|\textbf{n}|=p}  \tilde{c}(\textbf{m};\textbf{n}) g_{l,p-l}(\textbf{m},\textbf{n}).
	\end{align}
\end{Thm}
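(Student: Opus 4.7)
The plan is to mirror the strategy of Section \ref{Section3.1}, working building-block by building-block. The expansion \eqref{expansion for real} writes $g_1 + \i g_2$ in the orthonormal basis $\{\eta_{\textbf{m},\textbf{n}} := \mathrm{symm}(\otimes_{k}(u_{1,0}(k)^{\otimes m_k} \otimes v_{1,0}(k)^{\otimes n_k}))\}$ of the complexification of $(\mathfrak{H}\oplus\mathfrak{H})^{\odot p}$, so it suffices to establish the identity $I_p(\eta_{\textbf{m},\textbf{n}}) = \sum_{l=0}^{p} \I_{l,p-l}(g_{l,p-l}(\textbf{m},\textbf{n}))$ for a single basis element, and then to extend by $\mathbb{C}$-linearity after defining $I_p$ on complex kernels by $I_p(g_1 + \i g_2) := I_p(g_1) + \i I_p(g_2)$. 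Once this is done, a straightforward rearrangement delivers the stated formula for $\tilde{g}_{l,p-l}$.

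For a single building block, set $x_k := W(u_{1,0}(k))$ and $y_k := W(v_{1,0}(k))$. The definition \eqref{def of real integral} gives at once $I_p(\eta_{\textbf{m},\textbf{n}}) = \prod_k H_{m_k}(x_k) H_{n_k}(y_k)$, while Remark \ref{expand z} shows $Z(e_k) = x_k + \i y_k =: z_k$. Applying the Hermite conversion identity \eqref{inverse real and complex Hermite} to each factor produces $H_{m_k}(x_k) H_{n_k}(y_k) = \sum_{j_k=0}^{p_k} \tilde{a}_{k,j_k}\, J_{j_k, p_k-j_k}(z_k)$ with $p_k = m_k + n_k$. Only finitely many $k$ carry $p_k > 0$, so distributing the finite product yields
\begin{equation*}
I_p(\eta_{\textbf{m},\textbf{n}}) = \sum_{\textbf{j} \leq \textbf{p}} \left(\prod_k \tilde{a}_{k,j_k}\right) \prod_k J_{j_k, p_k-j_k}(z_k).
\end{equation*}
The remaining step is to recognize $\prod_k J_{j_k, p_k-j_k}(z_k)$ as a complex Wiener-It\^o integral. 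Using $\xi_k = e_k/\sqrt{2}$ as the orthonormal basis of $\mathfrak{H}_{\mathbb{C}}$ and plugging into the definition \eqref{def of complex integral} via \eqref{ONB of (p,q)chaos}, a direct check shows that this product equals $\I_{|\textbf{j}|, p-|\textbf{j}|}(\mathrm{symm}(\otimes_k e_k^{\otimes j_k}) \otimes \mathrm{symm}(\otimes_k \bar{e}_k^{\otimes (p_k-j_k)}))$; regrouping by $l = |\textbf{j}|$ produces the desired identity for $\eta_{\textbf{m},\textbf{n}}$.

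To pass from a single basis element to the full (possibly infinite) expansion \eqref{expansion for real}, I truncate the series to finite partial sums where $\mathbb{C}$-linearity immediately delivers the claim, and then take $L^2$-limits on both sides. The isometry $\|I_p(\cdot)\|_{L^2}^2 = p!\,\|\cdot\|^2$ on the complexified $(\mathfrak{H}\oplus\mathfrak{H})^{\odot p}$, together with the analogous isometry for each $\I_{l,p-l}$ on $\mathfrak{H}_{\mathbb{C}}^{\odot l} \otimes \mathfrak{H}_{\mathbb{C}}^{\odot(p-l)}$ and the summability condition $\sum |\tilde{c}(\textbf{m},\textbf{n})|^2 < \infty$, ensures that both sides converge to the claimed limits.

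The main obstacle I anticipate is the careful bookkeeping of normalization constants. Three independent scalings must cancel: the factor $2^{-p_k}$ inside \eqref{inverse real and complex Hermite}, the factor $(\sqrt{2})^{p_k}$ arising from the rescaling $e_k = \sqrt{2}\,\xi_k$ of the non-normalized basis, and the factor $(2^{p_k}\,j_k!(p_k-j_k)!)^{-1/2}$ hidden in the normalization of $\mathbf{J}_{\textbf{j},\textbf{p}-\textbf{j}}$ from \eqref{ONB of (p,q)chaos}. One must verify that these combine exactly so that $\prod_k J_{j_k, p_k-j_k}(z_k)$ coincides with the image under $\I_{l,p-l}$ of the unnormalized symmetric tensor appearing in $g_{l,p-l}(\textbf{m},\textbf{n})$ — any miscount would leave a spurious power of $2$ in the coefficients $\tilde{a}_{k,j_k}$ and break the match with the statement.
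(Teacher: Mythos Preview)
Your proposal is correct and follows essentially the same route as the paper's proof: compute $I_p$ on a basis element $\eta_{\mathbf{m},\mathbf{n}}$ by writing it as $\prod_k H_{m_k}(x_k)H_{n_k}(y_k)$, apply the conversion identity \eqref{inverse real and complex Hermite} factor-by-factor, recognize the resulting products of $J_{j_k,p_k-j_k}$ as the complex Wiener--It\^o integrals via \eqref{def of complex integral}, and then pass to the full expansion \eqref{expansion for real} by linearity. The paper organizes this into three steps (single index $k$, general multi-index, then the full sum) and is somewhat less explicit than you are about the $L^2$ limiting argument in the last step; your attention to the normalization powers of $2$ is warranted and the cancellation does occur exactly as you describe.
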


We prove Theorem \ref{inverse First proof} in Section \ref{Section5.4}. Moreover, by an induction argument and utilizing recursion formula concerning real multiple Wiener-It\^o integral given by It\^o (see \cite[Equation 3.4]{ito1951real}), we can offer a recursion representation version of Theorem \ref{inverse First proof}. The proof of this result is quite similar to that of Theorem \ref{Thm 1}  and so is omitted.

\subsection{Representation theorem}\label{Section4.2}

For an integer $1\leq j\leq 2^p$, $j-1$ can be uniquely expressed as a binary number
\begin{equation}
	j-1=\sum_{l=1}^{p}a_{jl}2^{l-1},\quad a_{jl}\in\left\lbrace0,1 \right\rbrace .
\end{equation}
Combining the real Stroock's formula \eqref{real Stroock}
with the fact that $\left( D_1,D_2 \right)^{\otimes p}$ is a $2^p$-dimensional column vector defined as
\begin{equation}
	\left( D_1,D_2 \right)^{\otimes p}=\left(D_{a_{j1}+1}D_{a_{j2}+1}\cdots D_{a_{jp}+1} \right) _{1\leq j\leq 2^p},
\end{equation}
we get the following lemma.

\begin{Lemma}\label{inverse kernel}
	Suppose that $I_p(g)$ with $g= \left(g_{j} \right)_{1\leq j\leq 2^p} \in(\mathfrak{H}\oplus\mathfrak{H})^{\odot p}$ is a real $p$-th Wiener-It\^{o} integral with respect to $W$, then
	\begin{equation}
		g_{j}=\frac{1}{p!}D_{a_{j1}+1}D_{a_{j2}+1}\cdots D_{a_{jp}+1}I_{p}(g).
	\end{equation}
\end{Lemma}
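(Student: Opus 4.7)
The strategy is to apply the real Stroock formula \eqref{real Stroock} to $F = I_p(g)$ and exploit the identity $D^k I_p(g) = \frac{p!}{(p-k)!} I_{p-k}(g)$ recalled in Section \ref{Section2.3}. At $k = p$ this specializes to the deterministic identity $D^p I_p(g) = p!\, g$ in $(\mathfrak{H}\oplus\mathfrak{H})^{\otimes p}$, so the expectation in Stroock's formula is trivial here and no further probabilistic step is required; the content of the lemma is purely the componentwise translation of this equality.

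To perform the translation, I would first make explicit the natural unitary isomorphism $(\mathfrak{H}\oplus\mathfrak{H})^{\otimes p} \cong \bigoplus_{(\epsilon_1,\dots,\epsilon_p)\in\{1,2\}^p} \mathfrak{H}^{\otimes p}$, obtained on simple tensors by splitting each slot $(h_k, f_k) = (h_k, 0) + (0, f_k)$ and distributing. Under this identification, the component of $g$ indexed by the binary word $(\epsilon_1,\dots,\epsilon_p)$ is precisely the coordinate $g_j$ appearing in the lemma, where $j$ and $(\epsilon_1,\dots,\epsilon_p)$ are matched via the binary encoding $j - 1 = \sum_{l=1}^{p} a_{jl} 2^{l-1}$ together with the rule $\epsilon_l = a_{jl} + 1$. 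On the other hand, the column vector $(D_1, D_2)^{\otimes p}$ prescribed in the excerpt is the componentwise reading of $D^p$ under the same isomorphism: on smooth cylinder variables as in \eqref{real smooth r.v.}, one has $D_1 W(h, f) = h$ and $D_2 W(h, f) = f$, so iterated application of $D_{\epsilon_p}, D_{\epsilon_{p-1}}, \ldots, D_{\epsilon_1}$ extracts the coefficient living in the $(\epsilon_1,\dots,\epsilon_p)$-slot of $D^p F$; this extends from cylinder variables to $F \in \mathbb{D}^{p,2}$ by closability.

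Equating the $j$-th components on both sides of $D^p I_p(g) = p!\, g$ then yields $D_{a_{j1}+1} D_{a_{j2}+1} \cdots D_{a_{jp}+1} I_p(g) = p!\, g_j$, which is the claim after dividing by $p!$. The argument is really bookkeeping between the tensor-product and direct-sum descriptions of the same Hilbert space, so no serious obstacle is expected; the only point worth explicit care is matching the order in which the scalar operators $D_{a_{jl}+1}$ are composed with the position of the tensor slot they occupy, which is straightforward once the isomorphism above and the convention for $(D_1, D_2)^{\otimes p}$ given in the excerpt are fixed.
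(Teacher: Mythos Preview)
Your proposal is correct and follows the same route as the paper: the paper states the lemma as an immediate consequence of combining the real Stroock formula \eqref{real Stroock} with the componentwise description $(D_1,D_2)^{\otimes p}=\bigl(D_{a_{j1}+1}\cdots D_{a_{jp}+1}\bigr)_{1\le j\le 2^p}$, which is exactly your argument (with your additional care about the isomorphism $(\mathfrak{H}\oplus\mathfrak{H})^{\otimes p}\cong\bigoplus_{\{1,2\}^p}\mathfrak{H}^{\otimes p}$ being more explicit than the paper's one-line justification).
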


For $1\leq j\leq 2^p$ and $0\leq k\leq p$, let $b_{kj}$ denote the number of $1$ in $\left\lbrace a_{j1},\ldots,a_{jk}\right\rbrace $, and $c_{kj}$ denote the number of $1$ in $\left\lbrace a_{j,k+1},\ldots,a_{jp}\right\rbrace $. Set $b_{0j}\equiv0, 1\leq j\leq 2^p$. Define column vectors
\begin{equation}
	V_k=\left(V_{kj} \right)_{1\leq j\leq 2^p} =\left(\left(-\i \right)^{b_{kj}}\i ^{c_{kj}}\right) _{1\leq j\leq 2^p},\quad 0\leq k\leq p . 
\end{equation}

\begin{Thm}\label{inverse expression of kernel}
	For any $g_1= \left(g_{1j} \right)_{1\leq j\leq 2^p} ,g_2=\left(g_{2j} \right)_{1\leq j\leq 2^p}\in(\mathfrak{H}\oplus\mathfrak{H})^{\odot p}$, consider the complex random variable $I_{p}\left(g_1\right) +\i I_{p}\left(g_2\right)$, where $I_{p}(\cdot)$ is the $p$-th real multiple Wiener-It\^{o} integral with respect to $W$. Then 
	\begin{align}
		I_{p}\left(g_1\right) +\i I_{p}\left(g_2\right)=
		\sum_{k=0}^{p}\I_{k,p-k}\left( g_{k,p-k}\right),
	\end{align}
	where $ g_{k,p-k}\in\mathfrak{H}_{\mathbb{C}}^{\odot k}\otimes\mathfrak{H}_{\mathbb{C}}^{\odot( p-k)}, 0\leq k\leq p$, are defined as 
	\begin{equation}\label{eq inverse expression of kernel}
		g_{k,p-k}=\frac{2^{-p/2}p!}{k!(p-k)!}\sum_{j=1}^{2^p}V_{kj}(g_{1j}+\i g_{2j}).
	\end{equation}
\end{Thm}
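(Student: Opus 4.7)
The plan is to apply the complex Stroock's formula \eqref{complex stroock} to $F := I_p(g_1) + \i I_p(g_2)$, then translate the complex Malliavin derivatives into real ones via Lemma \ref{Lemma relation r and c derivative}, and finally invoke Lemma \ref{inverse kernel} to identify the resulting real-kernel expressions. Observe first that Theorem \ref{inverse First proof} already tells us $F = \sum_{k=0}^{p} \I_{k,p-k}(g_{k,p-k})$, so there are no chaos components of total degree other than $p$. Because each $I_p(g_l) \in \mathbb{D}^{\infty}$, the sum $F$ belongs to $\mathscr{D}^{k,2}\cap\bar{\mathscr{D}}^{p-k,2}$ for every $0\le k\le p$, so the complex Stroock's formula gives
\begin{equation*}
    g_{k,p-k} \;=\; \frac{1}{k!(p-k)!}\,\mathbb{E}\bigl[\D^{k}\bar{\D}^{p-k} F\bigr].
\end{equation*}

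Next, I would use Lemma \ref{Lemma relation r and c derivative} to write $\D^{k}\bar{\D}^{p-k} = 2^{-p/2}(D_1-\i D_2)^{k}(D_1+\i D_2)^{p-k}$ and expand the right-hand side into the sum of $2^{p}$ position-by-position monomials. Encoding each choice by a binary string $(a_{j1},\dots,a_{jp})\in\{0,1\}^{p}$ with $j-1 = \sum_{l=1}^{p} a_{jl}\,2^{l-1}$, the $j$-th monomial is precisely $D_{a_{j1}+1}\cdots D_{a_{jp}+1}$. Its scalar coefficient gathers a factor $-\i$ from each occurrence $a_{jl}=1$ with $l\le k$ (arising in $(D_1-\i D_2)^{k}$) and a factor $+\i$ from each occurrence $a_{jl}=1$ with $l>k$ (arising in $(D_1+\i D_2)^{p-k}$). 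By the definitions of $b_{kj}$ and $c_{kj}$, this coefficient is exactly $V_{kj}=(-\i)^{b_{kj}}\i^{c_{kj}}$.

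To finish, I would apply Lemma \ref{inverse kernel} to $I_p(g_1)$ and $I_p(g_2)$ separately, with $g_l = (g_{lj})_{1\le j\le 2^p}$ in the natural direct-sum coordinates of $(\mathfrak{H}\oplus\mathfrak{H})^{\otimes p}$. This gives
\begin{equation*}
    \mathbb{E}\bigl[D_{a_{j1}+1}\cdots D_{a_{jp}+1}\,I_p(g_l)\bigr] \;=\; p!\,g_{lj}, \qquad l=1,2,
\end{equation*}
hence $\mathbb{E}[D_{a_{j1}+1}\cdots D_{a_{jp}+1}F] = p!\,(g_{1j}+\i g_{2j})$. Substituting into the Stroock expression above yields precisely \eqref{eq inverse expression of kernel}.

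The only point requiring care is the combinatorial bookkeeping in the expansion of $(D_1-\i D_2)^{k}(D_1+\i D_2)^{p-k}$: one must verify that the binary encoding of $j-1$ is consistent with the position-by-position expansion, so that the operator appearing at slot $l$ in the $j$-th monomial is indeed $D_{a_{jl}+1}$ and the collected $\pm\i$ factors agree with $V_{kj}$ as defined. This is routine once the conventions are pinned down, and apart from this the argument needs no analytic estimates beyond the two lemmas and the two Stroock formulae.
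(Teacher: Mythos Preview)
Your proposal is correct and follows essentially the same route as the paper's own proof: invoke Theorem~\ref{inverse First proof} for existence, apply the complex Stroock formula~\eqref{complex stroock}, translate $\D^{k}\bar{\D}^{p-k}$ into $(D_1-\i D_2)^{k}(D_1+\i D_2)^{p-k}$ via Lemma~\ref{Lemma relation r and c derivative}, expand into $2^{p}$ monomials indexed by binary strings, and read off the coefficients using Lemma~\ref{inverse kernel}. The paper states the combinatorial identity $(D_1-\i D_2)^{k}(D_1+\i D_2)^{p-k}=\sum_{j=1}^{2^p}V_{kj}D_{a_{j1}+1}\cdots D_{a_{jp}+1}$ as a fact, whereas you spell out the bookkeeping, but this is the only difference.
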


\begin{proof}
	According to Theorem \ref{inverse First proof}, there uniquely exist $g_{k,p-k}\in \mathfrak{H}_{\mathbb{C}}^{\odot k}\otimes\mathfrak{H}_{\mathbb{C}}^{\odot( p-k)}$ with $k=0,\ldots,p$ such that 
	\begin{equation}
		I_{p}\left(g_1\right) +\i I_{p}\left(g_2\right)=\sum_{k=0}^{p}\I_{k,p-k}(g_{k,p-k}).
	\end{equation}
	Combining complex Stroock's formula \eqref{complex stroock} and Equation \eqref{relation r and c derivative}, we have
	\begin{align}
		g_{k,p-k}&=\frac{1}{k!(p-k)!}\D^k\bar{\D}^{p-k}\left( I_{p}(g_1)+\i I_{p}(g_2)\right)\\
		&=\frac{2^{-p/2}}{k!(p-k)!}(D_1-\i D_2)^k(D_1+\i D_2)^{p-k}\left( I_{p}(g_1)+\i I_{p}(g_2)\right)\\ &=\frac{2^{-p/2}p!}{k!(p-k)!}\sum_{j=1}^{2^p}V_{kj}(g_{1j}+\i g_{2j}),
	\end{align}
	where the last equality follows from Lemma \ref{inverse kernel} and the fact that 
	$$(D_1-\i D_2)^k(D_1+\i D_2)^{p-k}=\sum_{j=1}^{2^p}V_{kj}D_{a_{j1}+1}D_{a_{j2}+1}\cdots D_{a_{jp}+1} .$$
\end{proof}

By Theorem \ref{inverse expression of kernel}, we get the following theorem. This theorem shows that if the kernels $g_1= \left(g_{1j} \right)_{1\leq j\leq 2^p} ,g_2=\left(g_{2j} \right)_{1\leq j\leq 2^p}\in(\mathfrak{H}\oplus\mathfrak{H})^{\odot p}$ satisfy condition \eqref{condition 1}, then the two-dimensional random vector $\left( I_{p}(g_1), I_{p}(g_2)\right) $ can be regarded as a complex multiple Wiener-It\^o integral. This means that we can utilize the theory of complex multiple Wiener-It\^o integrals to solve the problems concerning two-dimensional random vectors whose components are real multiple Wiener-It\^o integrals of the same order.

\begin{Thm}\label{2 to 1}
	Given a two-dimensional random vector $\left( I_{p}(g_1), I_{p}(g_2)\right) $ with $g_1= \left(g_{1j} \right)_{1\leq j\leq 2^p} ,g_2=\left(g_{2j} \right)_{1\leq j\leq 2^p}\in(\mathfrak{H}\oplus\mathfrak{H})^{\odot p}$, whose components are real multiple Wiener-It\^o integrals with respect to $W$. If there exists a unique $0\leq k\leq p$ such that
	\begin{equation}\label{condition 1}
		\begin{cases}
			\sum_{j=1}^{2^p}V_{kj}(g_{1j}+\i g_{2j})\neq0,\\
			\sum_{j=1}^{2^p}V_{lj}(g_{1j}+\i g_{2j})=0, \;l\neq k, 0\leq l\leq p,
		\end{cases}
	\end{equation}
	then $$I_{p}\left(g_1\right) +\i I_{p}\left(g_2\right)=\I_{k,p-k}(g_{k,p-k}),$$ where $g_{k,p-k}$ is defined as \eqref{eq inverse expression of kernel}.    
\end{Thm}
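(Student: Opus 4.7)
The plan is to derive Theorem \ref{2 to 1} directly as a one-line corollary of Theorem \ref{inverse expression of kernel}. That theorem already provides the finite complex chaos expansion
\begin{equation*}
I_p(g_1) + \i I_p(g_2) = \sum_{l=0}^p \I_{l,p-l}(g_{l,p-l}),
\end{equation*}
in which each kernel $g_{l,p-l} \in \mathfrak{H}_{\mathbb{C}}^{\odot l}\otimes\mathfrak{H}_{\mathbb{C}}^{\odot(p-l)}$ is given by the explicit closed form
\begin{equation*}
g_{l,p-l} = \frac{2^{-p/2} p!}{l!(p-l)!}\sum_{j=1}^{2^p} V_{lj}\bigl(g_{1j} + \i g_{2j}\bigr).
\end{equation*}

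From here the argument is immediate. I would simply read off the assumption \eqref{condition 1}: for every $l \in \{0,1,\dots,p\} \setminus \{k\}$ the sum $\sum_{j=1}^{2^p} V_{lj}(g_{1j}+\i g_{2j})$ vanishes, so the explicit formula above forces $g_{l,p-l} = 0$ for each such $l$. Only the $l = k$ term survives in the chaos expansion, which yields exactly $I_p(g_1)+\i I_p(g_2) = \I_{k,p-k}(g_{k,p-k})$ with $g_{k,p-k}$ as in \eqref{eq inverse expression of kernel}. Note that the uniqueness of the representation guaranteed by Theorem \ref{inverse First proof} guarantees there is no ambiguity in what the surviving kernel is.

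Because the proof is essentially mechanical, there is no real obstacle to overcome—the whole substance of the statement lies upstream, in establishing Theorem \ref{inverse expression of kernel} and its explicit formula \eqref{eq inverse expression of kernel}. The value of the present theorem is interpretive rather than technical: it isolates the precise algebraic condition on the pair $(g_1,g_2)$ under which the two-dimensional real chaos element $(I_p(g_1),I_p(g_2))$ collapses into a \emph{single} complex Wiener-It\^o integral $\I_{k,p-k}$, thereby making the tools of complex Wiener-It\^o theory available for its analysis. Accordingly, the write-up should be short, and the main care required is only to verify that the condition \eqref{condition 1} exactly matches the vanishing of the unwanted kernels produced by \eqref{eq inverse expression of kernel}.
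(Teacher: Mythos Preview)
Your proposal is correct and matches the paper's approach exactly: the paper does not even write out a separate proof for this theorem, introducing it simply with ``By Theorem \ref{inverse expression of kernel}, we get the following theorem,'' which is precisely your argument that condition \eqref{condition 1} forces all but one kernel in the expansion of Theorem \ref{inverse expression of kernel} to vanish.
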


\begin{Ex}\label{inverse ex}
	Back to Example \ref{ex}. $u_T(t,s)+\i v_T(t,s)\in (\mathfrak{H}\oplus\mathfrak{H})^{\odot 2 }$ is given by \eqref{u_T+iv_T}.
	For $p=2$,
	\begin{equation}
		V_0=\left( 1,\i,\i,-1\right) ^T,\,V_1=\left( 1,-\i,\i,1\right) ^T,\,V_2=\left( 1,-\i,-\i,-1\right) ^T.
	\end{equation}
	According to Theorem \ref{inverse expression of kernel} , $I_2(u_T)+\i I_2( v_T)$ can be uniquely expressed as 
	\begin{equation}
		I_2(u_T)+\i I_2( v_T)=\I_{0,2}(g_{0,2})+\I_{1,1}(g_{1,1})+\I_{2,0}(g_{2,0}),
	\end{equation}
	where
	\begin{align}
		g_{0,2}&=\frac{e^{-\gamma |t-s|}}{8\sqrt T}\left( 1-\mathbf{1}_{\left\lbrace 0\leq s\leq t\leq T\right\rbrace}+\mathbf{1}_{\left\lbrace 0\leq t\leq s\leq T\right\rbrace}- \mathbf{1}_{\left\lbrace 0\leq t\leq s\leq T\right\rbrace}+\mathbf{1}_{\left\lbrace 0\leq s\leq t\leq T\right\rbrace} -1\right)=0,\\
		g_{1,1}&=\frac{e^{-\gamma |t-s|}}{2\sqrt T}\left( 1+
		\mathbf{1}_{\left\lbrace 0\leq s\leq t\leq T\right\rbrace}-\mathbf{1}_{\left\lbrace 0\leq t\leq s\leq T\right\rbrace}\right) = 	\frac{1}{\sqrt{T}} e^{-\bar{\gamma}(t-s)}\mathbf{1}_{\left\lbrace 0\leq s\leq t\leq T\right\rbrace}=\psi_{T}(t,s),\\
		g_{2,0}&=\frac{e^{-\gamma |t-s|}}{8\sqrt T}\left( 1+\mathbf{1}_{\left\lbrace 0\leq s\leq t\leq T\right\rbrace}-\mathbf{1}_{\left\lbrace 0\leq t\leq s\leq T\right\rbrace}+
		\mathbf{1}_{\left\lbrace 0\leq t\leq s\leq T\right\rbrace}-\mathbf{1}_{\left\lbrace 0\leq s\leq t\leq T\right\rbrace} -1\right) =0.
	\end{align}
	That is to say, the condition \eqref{condition 1} is satisfied. Therefore,
	\begin{equation}
		I_2(u_T)+\i I_2( v_T)=\I_{1,1}(g_{1,1})=\I_{1,1}(\psi_{T}).
	\end{equation}
\end{Ex}

\begin{Rem}
	Combining the main results we established in Section \ref{Section3} and Section \ref{Section4}, we derive that 
	\begin{align}\label{chao decom}
		L_{\mathbb{C}}^2(\Omega, \sigma(Z), P)&=\bigoplus_{p=0}^{\infty}\bigoplus_{q=0}^{\infty}\mathscr{H}_{p,q}(Z)\\&=\bigoplus_{p=0}^{\infty}\bigoplus_{k+l=p}\mathscr{H}_{k,l}(Z)=\bigoplus_{p=0}^{\infty}\left( \mathcal{H}_p(W)+\i \mathcal{H}_p(W)\right).
	\end{align}
	(\eqref{chao decom} was proved by an existence proof in \cite{chen2017fourth}. We further clearly characterize the kernels of real and complex Wiener-It\^o integrals in this paper.)
	We can understand chaos decomposition \eqref{chao decom} from the perspective on the characteristic subspace of real and complex Ornstein-Uhlenbeck operators. Specifically, $\mathcal{H}_{p,q}(Z)$ is the characteristic subspace associated with the eigenvalue $-(p+q)\cos\theta-\i (p-q)\sin\theta$ of the infinitesimal generator $\L_{\theta}=e^{\mathrm{i} \theta} \L+e^{-\mathrm{i} \theta} \bar{\L} $ of the complex Ornstein-Uhlenbeck semigroup $\left\{\mathcal{T}_{t}\right\}$, where $\L$ and $\bar{\L}$ are complex Ornstein-Uhlenbeck operators and $\theta\in\left(-\frac{\pi}{2},\frac{\pi}{2} \right) $ is fixed, see \cite{chenliu2019} for more details. Given $I_p(g_1+\i g_2) \in \mathcal{H}_p(W)+\i \mathcal{H}_p(W)$ with $g_1,g_2\in(\mathfrak{H}\oplus\mathfrak{H})^{\odot p}$, Theorem \ref{2 to 1} shows that if the kernels $g_1$ and $g_2$ satisfy condition \eqref{condition 1}, then $I_p(g_1+\i g_2)$ is a eigenfunction associated with the eigenvalue $-p\cos\theta-\i (2k-p)\sin\theta$ of $\L_{\theta} $. Moreover, $\mathcal{H}_p(W)$ is the characteristic subspace associated with the eigenvalue $-p$ of the infinitesimal generator $L$ of the real Ornstein-Uhlenbeck semigroup $\left\{T_{t}\right\}$, see \cite[Section 1.4]{nualart2006malliavin} for more details. Given $\I_{p,q}(f)$ with $f\in\mathfrak{H}_{\mathbb{C}}^{\odot p}\otimes\mathfrak{H}_{\mathbb{C}}^{\odot q}$, by Theorem \ref{First proof}, Theorem \ref{Thm 1} or Corollary \ref{1 generalized Stroock}, $\mathrm{Re}\,\I_{p,q}(f)$ and $\mathrm{Im}\,\I_{p,q}(f)$ are eigenfunctions associated with the eigenvalue $-(p+q)$ of $L$.
	
\end{Rem}

\section{Proofs of main results}\label{Section5}
\subsection{Proof of Lemma \ref{lemma_basis}}\label{Section5.1}
\begin{proof}[Proof of Lemma \ref{lemma_basis}]
	Since $\left\{ e_k\right\}_{k\geq1} $ are orthogonal in $\mathfrak{H}_{\mathbb{C}}$ and $\left\|e_k \right\|^2_{\mathfrak{H}_{\mathbb{C}}}=2$, we have 
	\begin{align}
		2\delta_{kj}&=\left\langle e_k, e_j \right\rangle _{\mathfrak{H}_{\mathbb{C}}}
		=\left\langle e_k^1+\mathrm{i}e_k^2, e_j^1+\mathrm{i}e_j^2 \right\rangle _{\mathfrak{H}_{\mathbb{C}}}\\
		&=\left(\left\langle e_k^1, e_j^1 \right\rangle _{\mathfrak{H}}+\left\langle e_k^2, e_j^2 \right\rangle _{\mathfrak{H}}\right)+\mathrm{i}\left(\left\langle e_k^2, e_j^1 \right\rangle _{\mathfrak{H}}-\left\langle e_k^1, e_j^2 \right\rangle _{\mathfrak{H}}\right),
	\end{align}
	where $\delta_{kj} = \mathbf{1}_{\{k=j\}}$.
	It implies that for any $k$ and $j$,
	\begin{equation*}
		\begin{aligned}
			\left\langle e_k^1, e_j^1 \right\rangle _{\mathfrak{H}}+\left\langle e_k^2, e_j^2 \right\rangle _{\mathfrak{H}}&=2\delta_{kj},\\
			\left\langle e_k^2, e_j^1 \right\rangle _{\mathfrak{H}}-\left\langle e_k^1, e_j^2 \right\rangle _{\mathfrak{H}}&=0.
		\end{aligned}
	\end{equation*}
	Then, 
	\begin{equation*}
		\begin{aligned}
			\left\langle u_{1,0}(k),u_{1,0}(j)\right\rangle _{\mathfrak{H}\oplus\mathfrak{H}}&=\frac{1}{2}\left(\left\langle e_k^1, e_j^1 \right\rangle _{\mathfrak{H}}+\left\langle e_k^2, e_j^2 \right\rangle _{\mathfrak{H}}\right)=\delta_{kj},\\
			\left\langle v_{1,0}(k),v_{1,0}(j)\right\rangle _{\mathfrak{H}\oplus\mathfrak{H}}&=\frac{1}{2}\left(\left\langle e_k^2, e_j^2 \right\rangle _{\mathfrak{H}}+\left\langle e_k^1, e_j^1 \right\rangle _{\mathfrak{H}}\right)=\delta_{kj},\\
			\left\langle u_{1,0}(k),v_{1,0}(j)\right\rangle _{\mathfrak{H}\oplus\mathfrak{H}}&=\frac{1}{2}\left(\left\langle e_k^1, e_j^2 \right\rangle _{\mathfrak{H}}-\left\langle e_k^2, e_j^1 \right\rangle _{\mathfrak{H}}\right)=0,
		\end{aligned}
	\end{equation*}
	which shows that $\left\{ u_{1,0}(k),v_{1,0}(k)\right\}_{k\geq1} $ are orthonormal in $\mathfrak{H}\oplus\mathfrak{H}$.
	
	Next, we show the completeness of $\left\{ u_{1,0}(k),v_{1,0}(k)\right\}_{k\geq1} $. Suppose that $w=(w_1,w_2)\in\mathfrak{H}\oplus\mathfrak{H}$ satisfies $\left\langle w,u_{1,0}(k)\right\rangle _{\mathfrak{H}\oplus\mathfrak{H}}=0$ and $\left\langle w,v_{1,0}(k)\right\rangle _{\mathfrak{H}\oplus\mathfrak{H}}=0$ for any $k\geq1$. That is,
	\begin{equation*}
		\begin{aligned}
			\left\langle w,u_{1,0}(k)\right\rangle _{\mathfrak{H}\oplus\mathfrak{H}}=\frac{1}{\sqrt2}\left(\left\langle w_1,e_k^1 \right\rangle _{\mathfrak{H}}-\left\langle w_2, e_k^2 \right\rangle _{\mathfrak{H}}\right)=0,\\
			\left\langle w,v_{1,0}(k)\right\rangle _{\mathfrak{H}\oplus\mathfrak{H}}=\frac{1}{\sqrt2}\left(\left\langle w_1,e_k^2 \right\rangle _{\mathfrak{H}}+\left\langle w_2, e_k^1 \right\rangle _{\mathfrak{H}}\right)=0.
		\end{aligned}
	\end{equation*} 
	Let $\tilde{w}=w_2+\mathrm{i}w_1$, then 
	\begin{align}
		\left\langle \tilde{w}, e_k \right\rangle _{\mathfrak{H}_{\mathbb{C}}}
		&=\left\langle w_2+\mathrm{i}w_1, e_k^1+\mathrm{i}e_k^2 \right\rangle _{\mathfrak{H}_{\mathbb{C}}}\\
		&=\left(\left\langle w_2, e_k^1 \right\rangle _{\mathfrak{H}}+\left\langle w_1, e_k^2 \right\rangle _{\mathfrak{H}}\right)+\mathrm{i}\left(\left\langle w_1, e_k^1 \right\rangle _{\mathfrak{H}}-\left\langle w_2, e_k^2 \right\rangle _{\mathfrak{H}}\right)\\
		&=0,
	\end{align}
	which indicates that $\tilde{w}=0$ by the completeness of $\left\{ e_k\right\}_{k\geq1} $ in $\mathfrak{H}_{\mathbb{C}}$. Therefore, $w=(w_1,w_2)=0$ in $\mathfrak{H}\oplus\mathfrak{H}$.
\end{proof}

\subsection{Proofs of Lemma \ref{lemma repre for sub basis}, Proposition \ref{Prop repre for basis} and Theorem \ref{First proof}} \label{Section5.2}

From now on, for the real multiple Wiener-It\^{o} integral $I_{p}(\cdot)$ with respect to $W$, if the kernel is complex with the form $f+\i g$, where $f,g\in (\mathfrak{H}\oplus\mathfrak{H})^{\odot p}$, we set $$I_{p}(f+\i g)=I_{p}(f)+\i I_{p}(g).$$

\begin{proof}[Proof of Lemma \ref{lemma repre for sub basis}]

	By the relation between real and complex Hermite polynomials \eqref{real and complex Hermite}, we have
	\begin{align}
		&\I_{p, q}\left(e_1^{\otimes p}\otimes\overline{e}_1^{\otimes q}\right)=J_{p, q}\left(Z(e_1)\right)\\
		=&\sum_{j=0}^{p+q} \mathrm{i}^{p+q-j} \sum_{r+s=j}\binom{p}{r}\binom{q}{s}(-1)^{q-s}H_j\left( \mathrm{Re} Z(e_1)\right) H_{p+q-j}\left( \mathrm{Im} Z(e_1)\right)\\
		=&\sum_{j=0}^{p+q} \mathrm{i}^{p+q-j} \sum_{r+s=j}\binom{p}{r}\binom{q}{s}(-1)^{q-s}H_j\left( W\left(u_{1,0}(1) \right) \right) H_{p+q-j}\left( W\left(v_{1,0}(1) \right)\right)\\
		=&\sum_{j=0}^{p+q} \mathrm{i}^{p+q-j} \sum_{r+s=j}\binom{p}{r}\binom{q}{s}(-1)^{q-s}I_j\left(u_{1,0}(1)^{\otimes j} \right) I_{p+q-j}\left( v_{1,0}(1) ^{\otimes (p+q-j)}\right).
	\end{align}
	According to the definition of real Wiener-It\^o integral \eqref{def of real integral},
	\begin{equation}
		I_j\left(u_{1,0}(1)^{\otimes j} \right) I_{p+q-j}\left( v_{1,0}(1) ^{\otimes (p+q-j)}\right)
		=I_{p+q}\left(u_{1,0}(1)^{\otimes j}\otimes v_{1,0}(1) ^{\otimes (p+q-j)}\right).
	\end{equation}
	Then the proof is completed.
\end{proof}

\begin{proof}[Proof of Proposition \ref{Prop repre for basis}]
	Using the definitions of real and complex Wiener-It\^o integrals, \eqref{def of real integral} and \eqref{def of complex integral}, respectively, we have
	\begin{align}
		&\I_{p,q}\left(\mathrm{symm}\left( \otimes_{k=1}^{\infty}e_k^{\otimes p_k}\right) \otimes  \mathrm{symm}\left( \otimes_{k=1}^{\infty}\overline{e}_k^{\otimes q_k}\right)\right)\\=& \prod_{k=1}^{\infty}\I_{p_k,q_k}\left( e_k^{\otimes p_k} \otimes   \overline{e}_k^{\otimes q_k}\right) \\
		\overset{\eqref{repre for sub basis}}{=}&\prod_{k=1}^{\infty}\left(  \sum_{j=0}^{p_k+q_k} a_{k,j}I_{p_k+q_k}\left( u_{1,0}(k)^{\otimes j}\otimes v_{1,0}(k)^{\otimes (p_k+q_k-j)}\right)\right) \\
		=&\sum_{\textbf{j}\leq \textbf{p}+\textbf{q}}\left( \prod_{l=1}^{\infty}a_{l,j_l}\right) I_{p+q}\left(\otimes_{k=1}^{\infty} \left( u_{1,0}(k)^{\otimes j_k}\otimes v_{1,0}(k)^{\otimes (p_k+q_k-j_k)} \right) \right) \\
		=&I_{p+q}\left(u(\textbf{p},\textbf{q})+\i v(\textbf{p},\textbf{q})\right).
	\end{align}
\end{proof}

\begin{proof}[Proof of Theorem \ref{First proof}]
	For $f\in\mathfrak{H}_{\mathbb{C}}^{\odot p}\otimes\mathfrak{H}_{\mathbb{C}}^{\odot q}$ with the expansion given by \eqref{another expansion},
	\begin{align}
		\I_{p,q}\left(f\right)
		&=\sum_{\textbf{p},\textbf{q}\in\Lambda,|\textbf{p}|=p, |\textbf{q}|=q}c(\textbf{p};\textbf{q})\I_{p,q}\left(\mathrm{symm}\left( \otimes_{k=1}^{\infty}e_k^{\otimes p_k}\right) \otimes  \mathrm{symm}\left( \otimes_{k=1}^{\infty}\overline{e}_k^{\otimes q_k}\right)\right)\\
		&=\sum_{\textbf{p},\textbf{q}\in\Lambda,|\textbf{p}|=p, |\textbf{q}|=q}c(\textbf{p};\textbf{q})  I_{p+q}\left( u(\textbf{p},\textbf{q})+\i v(\textbf{p},\textbf{q})\right) \\
		&=\sum_{\textbf{p},\textbf{q}\in\Lambda,|\textbf{p}|=p, |\textbf{q}|=q}I_{p+q}\left(  c(\textbf{p};\textbf{q})\left(u(\textbf{p},\textbf{q})+\i v(\textbf{p},\textbf{q}) \right) \right)  .
	\end{align}
	We complete the proof.
\end{proof}

\subsection{Proofs of Theorem \ref{prop_represent} and Theorem \ref{Thm 1} }\label{Section5.3}

For $1\leq r\leq q+1$, we write
\begin{equation}
	\I_{p,q}(\k,\widehat{\j}_r) :=  \I_{p,q}(e_{k_1}\otimes\cdots\otimes e_{k_p}\otimes \overline{e_{j_1}}\otimes\cdots \overline{e_{j_{r-1}}}\otimes\overline{e_{j_{r+1}}}\otimes \cdots\otimes\overline{e_{j_{q+1}}}).
\end{equation}
For $1\leq l\leq p+1$, we also define $\I_{p,q}(\widehat{\k}_l,\j)$ in the same way. The following useful lemma is rephrased from \cite[Theorem 9]{ito1952complex}. 
\begin{Lemma}
	For $p\geq0$, $q\geq0$, we have
	\begin{align}
		\I_{p+1,q}\left(\k,\j\right)
		&= \I_{p,q}\left(\k,\j\right) \I_{1,0}\left(e_{k_{p+1}}\right)
		- \mathbf{1}_{\left\{ q>0 \right\}} \sum_{r=1}^{q}2\delta_{j_r,k_{p+1}} \I_{p,q-1}\left(\k, \widehat{\j}_r\right),\label{I_recurrence_p} \\
		\I_{p,q+1}\left(\k,\j\right)
		&=\I_{p,q}\left(\k,\j\right)\I_{0,1}\left(\overline{e_{j_{q+1}}}\right)-\mathbf{1}_{\left\{ p>0\right\} }\sum_{l=1}^{p}2\delta_{k_l,j_{q+1}}\I_{p-1,q}\left(\widehat{\k}_l,\j\right) 	\label{I_recurrence_q},
	\end{align}
	where $\mathbf{1}_{A}$ is the indicator function of a set $A$ and $\delta_{k,j}=\mathbf{1}_{\left\lbrace k=j\right\rbrace }$.
\end{Lemma}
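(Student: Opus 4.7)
The plan is to obtain both recursions as direct consequences of the product formula \eqref{complex product} for complex Wiener-It\^o integrals, combined with an explicit evaluation of a single $(0,1)$-contraction. Set
\[
	f = \mathrm{symm}(e_{k_1}\otimes\cdots\otimes e_{k_p})\otimes \mathrm{symm}(\overline{e_{j_1}}\otimes\cdots\otimes\overline{e_{j_q}}),\qquad g = e_{k_{p+1}},
\]
so that $\I_{p,q}(f)=\I_{p,q}(\k,\j)$ and $\I_{1,0}(g)=\I_{1,0}(e_{k_{p+1}})$. Applying \eqref{complex product} with $(a,b,c,d)=(p,q,1,0)$, the vanishing of $\binom{0}{i}$ for $i\geq 1$ forces the outer index $i=0$, while $c=1$ restricts the inner index to $j\in\{0,1\}$ (with $j=1$ requiring $q\geq 1$). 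Thus the product reduces to at most two terms: the $j=0$ term is $\I_{p+1,q}(f\otimes g)=\I_{p+1,q}(\k',\j)$ with $\k'=(k_1,\ldots,k_{p+1})$, and, when $q\geq1$, the $j=1$ term carries the coefficient $\binom{q}{1}\binom{1}{1}\cdot 1!=q$.

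The key computation is therefore the evaluation of $f\otimes_{0,1}g$. Using the partial-contraction formula for a symmetric tensor,
\[
	\langle \mathrm{symm}(\overline{e_{j_1}}\otimes\cdots\otimes\overline{e_{j_q}}),\bar{\xi}_{l_1}\rangle = \frac{1}{q}\sum_{r=1}^{q}\overline{\langle e_{j_r},\xi_{l_1}\rangle}\,\mathrm{symm}\bigl(\overline{e_{j_1}}\otimes\cdots\widehat{\overline{e_{j_r}}}\cdots\otimes\overline{e_{j_q}}\bigr),
\]
together with Parseval's identity
\[
	\sum_{l_1}\langle e_{k_{p+1}},\xi_{l_1}\rangle\overline{\langle e_{j_r},\xi_{l_1}\rangle}=\langle e_{k_{p+1}},e_{j_r}\rangle_{\mathfrak{H}_{\mathbb{C}}}=2\delta_{j_r,k_{p+1}},
\]
where the factor $2$ arises from the normalization $\|e_k\|_{\mathfrak{H}_{\mathbb{C}}}^2=2$, I would find
\[
	f\otimes_{0,1}g=\frac{2}{q}\sum_{r=1}^{q}\delta_{j_r,k_{p+1}}\,\mathrm{symm}(e_{k_1}\otimes\cdots\otimes e_{k_p})\otimes\mathrm{symm}\bigl(\overline{e_{j_1}}\otimes\cdots\widehat{\overline{e_{j_r}}}\cdots\otimes\overline{e_{j_q}}\bigr).
\]
Multiplying by the coefficient $q$ from the product formula cancels the $1/q$, yielding exactly $2\sum_r\delta_{j_r,k_{p+1}}\I_{p,q-1}(\k,\widehat{\j}_r)$; rearranging gives \eqref{I_recurrence_p}.

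For the companion identity \eqref{I_recurrence_q} I would either repeat the argument with $g=\overline{e_{j_{q+1}}}$ (so that $(c,d)=(0,1)$ and the roles of the contraction indices are swapped), or, more economically, derive it from \eqref{I_recurrence_p} by taking complex conjugates and invoking $\overline{\I_{p,q}(\k,\j)}=\I_{q,p}(\j,\k)$ together with the swap $(p,\k)\leftrightarrow(q,\j)$. The main technical obstacle is purely bookkeeping: one must correctly balance the symmetrization factor $1/q$ coming out of the partial contraction against the combinatorial coefficient $q$ produced by the product formula, and carefully track the factor $2$ stemming from the fact that $\{e_k\}$ is orthogonal with square-norm $2$ rather than orthonormal; once the conventions are fixed this is mechanical, but it is the only place where the argument can easily slip.
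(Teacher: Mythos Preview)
Your argument is correct. The paper itself does not prove this lemma at all: it simply states that the result is ``rephrased from \cite[Theorem 9]{ito1952complex}'' and uses it as a black box. Your route---specializing the complex product formula \eqref{complex product} with $(a,b,c,d)=(p,q,1,0)$ and computing the single surviving contraction $f\otimes_{0,1}g$---is a clean self-contained derivation, and the bookkeeping you flag (the $1/q$ from symmetrizing against the coefficient $q$ from the product formula, and the factor $2$ from $\|e_k\|_{\mathfrak{H}_{\mathbb{C}}}^2=2$) is handled correctly. The conjugation shortcut for \eqref{I_recurrence_q} via $\overline{\I_{p,q}(\k,\j)}=\I_{q,p}(\j,\k)$ is also fine and is exactly the symmetry the paper exploits elsewhere (see \eqref{uv_conjugate}). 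One minor caveat: logically the product formula \eqref{complex product} is a strictly stronger statement than the recursion you are proving, and in It\^o's original development the recursion comes first; but since the paper imports \eqref{complex product} from independent references \cite{Chen2017,Hoshino2017}, there is no circularity within the framework of this paper.
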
 

For simplicity, we write
\begin{equation}\label{kernel-1}
	\begin{aligned}
		u_{p,q}\left(\k,\widehat{\j}_r\right) &:= u_{p,q}\left(k_1,\dots, k_p;j_1,\dots, j_{r-1}, j_{r+1}, \cdots, j_{q+1}\right), \quad \mbox{for } 1\leq r\leq q+1, \\
		u_{p,q}\left(\widehat{\k}_l,\j\right) &:= u_{p,q}\left(k_1,\dots, k_{l-1}, k_{l+1}, \dots, k_{p+1};j_1,\cdots, j_{q}\right),  \quad \mbox{for } 1\leq l\leq p+1,\\
		v_{p,q}\left(\k,\widehat{\j}_r\right) &:= v_{p,q}\left(k_1,\dots, k_p;j_1,\dots, j_{r-1}, j_{r+1}, \cdots, j_{q+1}\right), \quad \mbox{for } 1\leq r\leq q+1, \\
		v_{p,q}\left(\widehat{\k}_l,\j\right) &:= v_{p,q}\left(k_1,\dots, k_{l-1}, k_{l+1}, \dots, k_{p+1};j_1,\cdots, j_{q}\right),  \quad \mbox{for } 1\leq l\leq p+1.
	\end{aligned}
\end{equation}
By \eqref{u_recurrence} and \eqref{v_recurrence}, we get that
\begin{align}\label{(u+iv)_recurrence}
	w_{p,q}(\k,\j)&:=u_{p,q}(\k,\j)+\i v_{p,q}(\k,\j)\\&=\left( u_{p-1,q}(\k,\j)+\i v_{p-1,q}(\k,\j)\right) \tilde{\otimes}\left( u_{1,0}(k_p)+\i v_{1,0}(k_p)\right) \\
	&=\left( u_{p,q-1}(\k,\j)+\i v_{p,q-1}(\k,\j)\right) \tilde{\otimes}\left( u_{0,1}(j_q)+\i v_{0,1}(j_q)\right) .
\end{align}	
Define $w_{1,0}(j):=u_{1,0}(j)+\i v_{1,0}(j)$ and $w_{0,1}(j):=u_{0,1}(j)+\i v_{0,1}(j)$.
\begin{proof}[Proof of Theorem \ref{prop_represent}]
	The proof is by induction.
	We use $\A(p,q)$ to denote the equation
	\begin{equation}
		\I_{p,q}\left(\k,\j\right)= I_{p+q}\left(u_{p,q}\left(\k,\j\right)+\mathrm{i} v_{p,q}\left(\k,\j\right)\right)=I_{p+q}\left(w_{p,q}\left(\k,\j\right)\right). 
	\end{equation} 
	For $p\geq0$, $q\geq0$, $p+q>0$, let $\B(p,q)$ denote the equation
	\begin{equation}
		(p+q)I_{p+q-1}\left( w_{p,q}(\k,\j) \tilde{\otimes}_1 w_{1,0}(k_{p+1}) \right)  =\mathbf{1}_{\left\{q>0\right\}} \sum_{r=1}^{q}2\delta_{j_r,k_{p+1}}\I_{p,q-1}(\k, \widehat{\j}_r).
	\end{equation}
	
	We will prove $\A(p,q)$ and $\B(p,q)$ hold by induction with respect to $n:=p+q$ in three steps. See \autoref{Picture0} for complete ideas of proof and \autoref{Picture} for details.
	\begin{figure}[htbp]
		\centering
		\includegraphics[width=10cm]{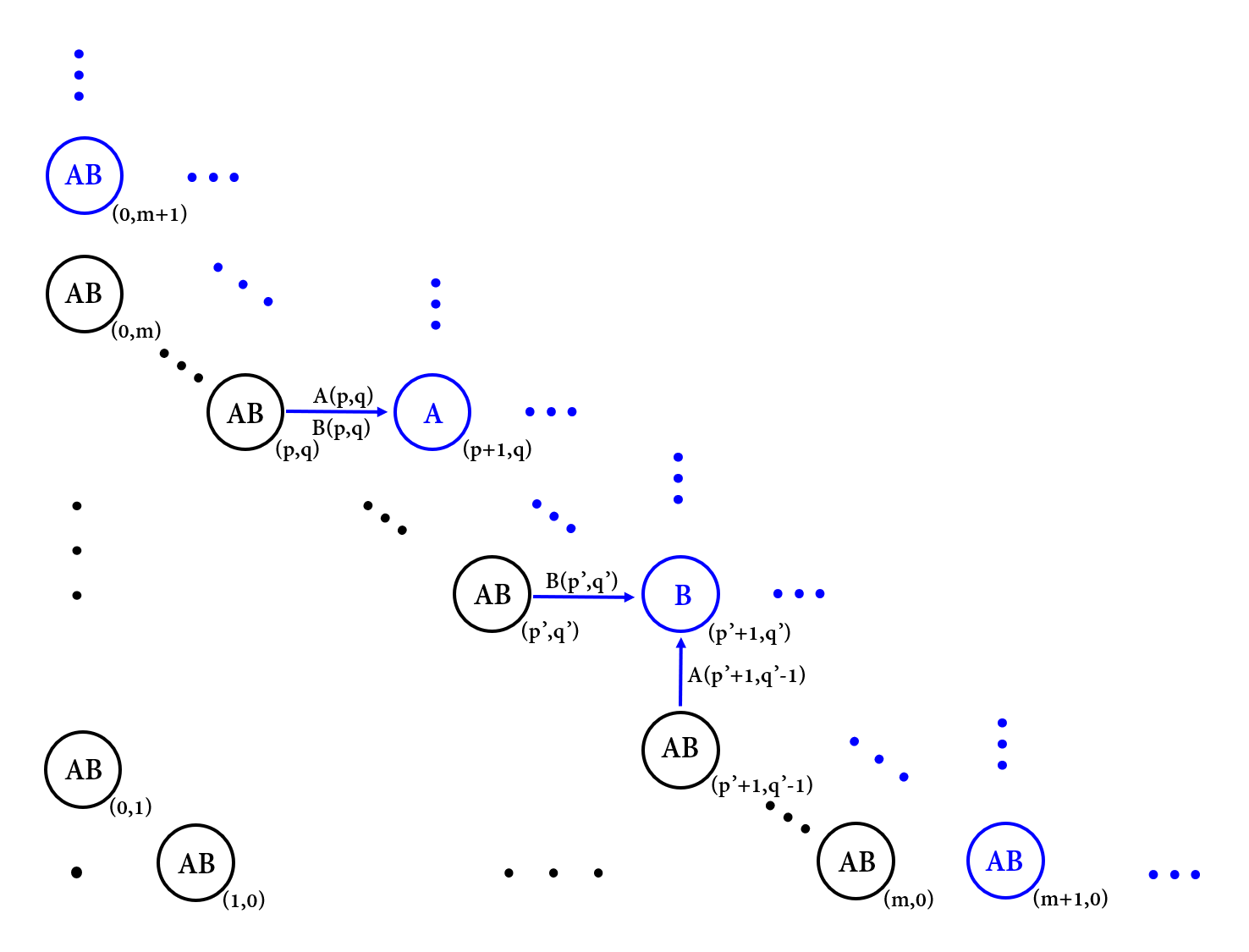}
		\caption{{\small The black part represents that conditions $A$ and $B$ hold for $(p,q)$ with $p+q\leq m$. The blue part represents conditions $A$ and $B$ for $(p,q)$ with $p+q\geq m+1$. We prove the blue part by induction with respect to $p+q$.}}
		\label{Picture0}
	\end{figure}

	\begin{figure}[htbp]
		\centering
		\subfigure[Step 1]{\includegraphics[width=3cm]{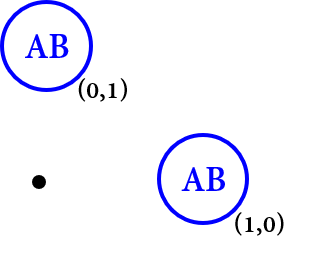}}
		\hspace{.3in}
		\subfigure[Step 2]{\includegraphics[width=3.5cm]{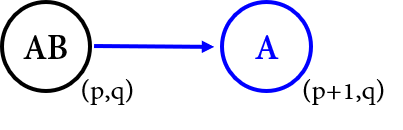}}
		\hspace{.3in}
		\subfigure[Step 3]{\includegraphics[width=3.5cm]{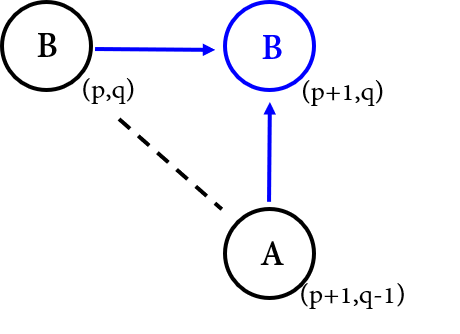}}
		\caption{\small{In Step 1, we prove $A(p,q)$ and $B(p,q)$ for $p+q=1$. In Step 2, we prove $A(p+1,q)$ with $A(p,q)$ and $B(p,q)$ hold. In Step 3, we prove $B(p+1,q)$ with $A(p+1,q-1)$ and $B(p,q)$ hold.}}
		\label{Picture}
	\end{figure} 	
	
	\textbf{Step 1}
	Suppose $n=p+q=1$. For $(p,q)=(1,0)$, \eqref{I_10} implies $\A(1,0)$.
	By \eqref{uv_10}, $$\I_{0,1}(\overline{e_j})=\overline{\I_{1,0}({e_j})}=I_1(u_{1,0}(j))-\mathrm{i}I_1(v_{1,0}(j))=I_1(u_{0,1}(j)+\mathrm{i}v_{0,1}(j)).$$ 
	It follows that $\A(0,1)$ holds.
	
	A direct calculation shows that
	\begin{align}
		&I_{0}\left( w_{0,1}(j_1)\tilde{\otimes}_1w_{1,0}(k_{1})  \right) \\
		=&\left( u_{1,0}(j_1)-\i v_{1,0}(j_1)\right)\tilde{\otimes}_1\left(u_{1,0}(k_{1}+\i v_{1,0}(k_{1} \right)\\
		=&u_{1,0}(j_1)\tilde{\otimes}_1u_{1,0}(k_{1}) + v_{1,0}(j_1)\tilde{\otimes}_1v_{1,0}(k_{1}) + \mathrm{i}\left(u_{1,0}(j_1)\tilde{\otimes}_1v_{1,0}(k_{1})-v_{1,0}(j_1)\tilde{\otimes}_1u_{1,0}(k_{1})\right)\\
		=&2\delta_{j_1,k_1}.
	\end{align}
	Therefore, we have $\B(0,1)$. Similarly, we get that
	\begin{equation}		
		I_{0}\left(w_{1,0}(k_1)\tilde{\otimes}_1 w_{1,0}(k_{2}) \right) =0,	
	\end{equation}		
	which implies $\B(1,0)$.
	
	\textbf{Step 2}
	Suppose $\A(p,q)$ and $\B(p,q)$ hold for $n=p+q\leq m$ with $m\geq 1$.
	In this step, we will prove that $\A(p,q)$ holds for $n = m+1$. 
	
	It suffices to prove $\A(p+1,q)$ and $\A(p,q+1)$ hold for $p+q=m$. By \eqref{I_recurrence_p}, $\A(p,q)$, $\A(1,0)$ and \eqref{Product_formula}, we have
	\begin{align}
		&\I_{p+1,q}(\k,\j)\\
		=&\I_{p,q}(\k,\j)\I_{1,0}(e_{k_{p+1}})
		-\mathbf{1}_{\left\{ q>0 \right\}} \sum_{r=1}^{q}2\delta_{j_r,k_{p+1}} \I_{p,q-1}(\k,\widehat{\j}_r)\\
		=&I_{p+q}(w_{p,q}(\k,\j)) I_1(w_{1,0}(k_{p+1}))-\mathbf{1}_{\left\{ q>0\right\} }\sum_{r=1}^{q}2\delta_{j_r,k_{p+1}}\I_{p,q-1}(\k,\widehat{\j}_r)\\
		=&\sum_{r=0}^{1}r!\binom{p+q}{r}\binom{1}{r} I_{p+q+1-2r}\left(
		w_{p,q}(\k,\j) \tilde{\otimes}_rw_{1,0}(k_{p+1}) \right) \\&-\mathbf{1}_{\left\{ q>0\right\} }\sum_{r=1}^{q}2\delta_{j_r,k_{p+1}}\I_{p,q-1}(\k,\widehat{\j}_r)\\
		=&I_{p+q+1}\left(
		w_{p,q}(\k,\j) \tilde{\otimes}w_{1,0}(k_{p+1}) \right) +\left( p+q\right)  I_{p+q+1-2}\left(
		w_{p,q}(\k,\j) \tilde{\otimes}_1w_{1,0}(k_{p+1}) \right)\\
		&-\mathbf{1}_{\left\{ q>0\right\} }\sum_{r=1}^{q}2\delta_{j_r,k_{p+1}}\I_{p,q-1}(\k,\widehat{\j}_r)\\
		=&I_{p+q+1}\left(w_{p+1,q}(\k,\j)\right),
	\end{align}
	where the last equality follows from $\B(p,q)$ and \eqref{(u+iv)_recurrence}. Therefore, $\A(p+1,q)$ holds.

	By \eqref{uv_conjugate} and $\B(q,p)$, we have
	\begin{align}
		&(p+q)I_{p+q-1}\left(w_{p,q}(\k,\j) \tilde{\otimes}_1w{0,1}(j_{q+1})\right)  
		=(p+q)I_{p+q-1}\left( \overline{w_{q,p}(\j,\k)} \tilde{\otimes}_1 \overline{w_{1,0}(j_{q+1})} \right) 
		\\=&\mathbf{1}_{\left\{ p>0\right\} }\sum_{l=1}^{p}2\delta_{k_l,j_{q+1}}\overline{\I_{q,p-1}(\j,\widehat{\k}_l)}
		=\mathbf{1}_{\left\{ p>0\right\} }\sum_{l=1}^{p}2\delta_{k_l,j_{q+1}}\I_{p-1,q}(\widehat{\k}_l,\j).
	\end{align}
	Combining this equation and \eqref{I_recurrence_q}, we obtain $\A(p,q+1)$ by a similar argument of $\A(p+1,q)$. 
	
	\textbf{Step 3} 
	Suppose $\A(p,q)$ holds for $n=p+q\leq m+1$ and $\B(p,q)$ holds for $p+q\leq m$. In this step, we will prove $\B(p,q)$ holds for $n=m+1$.
	
	It suffices to show that $\B(p+1,q)$ and $\B(p,q+1)$ hold for $p+q=m$, where the equation $\B(p+1,q)$ is 
	\begin{align}
		&(p+q+1)I_{p+q}\left(w_{p+1,q}(\k,\j)  \tilde{\otimes}_1  w_{1,0}(k_{p+2}) \right)   
		=\mathbf{1}_{\left\{ q>0\right\}} \sum_{r=1}^{q}2\delta_{j_r,k_{p+2}}\I_{p+1,q-1}(\k,\widehat{\j}_r).
	\end{align}
	By $\A(p+1,q-1)$, we have 
	\begin{equation}
		\I_{p+1,q-1}(\k,\widehat{\j}_r) = I_{p+q}\left( w_{p+1,q-1}(\k,\widehat{\j}_r) \right).
	\end{equation}
	To prove $\B(p+1,q)$, it suffices to show that
	\begin{align}\label{kernel_contraction}
		(p+q+1) \left(w_{p+1,q}(\k,\j) \tilde{\otimes}_1  w_{1,0}(k_{p+2})\right)
		=\mathbf{1}_{\left\{ q>0\right\} }\sum_{r=1}^{q}2\delta_{j_r,k_{p+2}} w_{p+1,q-1}(\k,\widehat{\j}_r) .
	\end{align}
	By \eqref{(u+iv)_recurrence}, we have
	\begin{align}
		w_{p+1,q}(\k,\j) \tilde{\otimes}_1  w_{1,0}(k_{p+2})
		=&\left[w_{p,q}(\k,\j) \tilde{\otimes}w_{1,0}(k_{p+1})\right] \tilde{\otimes}_1w_{1,0}(k_{p+2})\\
		=&\frac{p+q}{p+q+1}\left[w_{p,q}(\k,\j) \tilde{\otimes}_1w_{1,0}(k_{p+2}) \right] \tilde{\otimes}w_{1,0}(k_{p+1})\\
		=&\frac{1}{p+q+1}\mathbf{1}_{\left\{ q>0\right\} }\sum_{r=1}^{q}2\delta_{j_r,k_{p+2}}
		w_{p,q-1}(\k,\widehat{\j}_r)\tilde{\otimes}w_{1,0}(k_{p+1}) \\
		=&\frac{1}{p+q+1}\mathbf{1}_{\left\{ q>0\right\} }\sum_{r=1}^{q}2\delta_{j_r,k_{p+2}}w_{p+1,q-1}(\k,\widehat{\j}_r),
	\end{align}
	where the second to last equality follows from $\B(p,q)$ and the last equality follows from \eqref{(u+iv)_recurrence}. This means \eqref{kernel_contraction} and thus $\B(p+1,q)$ holds. Using the similar argument as above, we can get $\B(p,q+1)$.

	Hence by the inductive method, $\A(p,q)$ and $\B(p,q)$ hold for $p,q\geq 0$ and $p+q>0$. The proof is completed.
\end{proof}

\begin{proof}[Proof of Theorem \ref{Thm 1}]
	For $f\in\mathfrak{H}_{\mathbb{C}}^{\odot p}\otimes\mathfrak{H}_{\mathbb{C}}^{\odot q}$ with the expansion given by \eqref{complex kernel}, by the linearity of mappings $\I_{p,q}\left(\cdot \right) $ and $I_p\left(\cdot \right) $, we have	
	\begin{align}
		\I_{p,q}(f)&=\sum_{k_1,\dots,k_p,j_1,\dots,j_q=1}^{\infty}b_{p,q}(\k,\j)\I_{p,q}\left(\k,\j\right) \\
		&=\sum_{k_1,\dots,k_p,j_1,\dots,j_q=1}^{\infty}b_{p,q}(\k,\j) I_{p+q}\left(u_{p,q}\left(\k,\j\right)+\i v_{p,q}\left(\k,\j\right)\right) \\
		&=\sum_{k_1,\dots,k_p,j_1,\dots,j_q=1}^{\infty}I_{p+q}\left( b_{p,q}(\k,\j)\left( u_{p,q}\left(\k,\j\right)+\i v_{p,q}\left(\k,\j\right)\right)\right) .
	\end{align}
\end{proof}

\subsection{Proof of Theorem \ref{inverse First proof}}\label{Section5.4}
\begin{proof}[Proof of Theorem \ref{inverse First proof}]
	
	\textbf{Step 1} For $ m+n=p$, we show the unique representation for $$I_p\left( u_{1,0}(1)^{\otimes m}\otimes v_{1,0}(1)^{\otimes n}\right).$$
	Combining the definition of real Wiener-It\^o integral \eqref{def of real integral} and the relation between real and complex Hermite polynomials \eqref{inverse real and complex Hermite}, we have
	\begin{align}\label{inverse repre for sub basis}
		&I_p\left( u_{1,0}(1)^{\otimes m}\otimes v_{1,0}(1)^{\otimes n}\right) \\=&I_m\left( u_{1,0}(1)^{\otimes m}\right) I_n\left(  v_{1,0}(1)^{\otimes n}\right) \\
		=&H_m\left(W\left(  u_{1,0}(1)\right) \right) H_n\left( W\left(  v_{1,0}(1)\right) \right)\\
		=&\frac{\i ^n}{2^p}\sum_{j=0}^{p}\sum_{r+s=j}\binom{m}{r}\binom{n}{s}(-1)^sJ_{j, p-j}\left(Z\left( e_1\right) \right)\\
		=&\frac{\i ^n}{2^p}\sum_{j=0}^{p}\sum_{r+s=j}\binom{m}{r}\binom{n}{s}(-1)^s\I_{j, p-j}\left(e_1^{\otimes j}\otimes\overline{e}_1^{\otimes p-j}\right).
	\end{align}
	
	\textbf{Step 2} For	$\textbf{m}=\left\lbrace m_k \right\rbrace_{k=1}^{\infty}, \textbf{n}=\left\lbrace n_k \right\rbrace_{k=1}^{\infty} \in\Lambda$ with $|\textbf{m}|+|\textbf{n}|=p$, we give the representation for 
	$$	I_{p}\left(\mathrm{symm}\left( \otimes_{k=1}^{\infty}\left( u_{1,0}(k)^{\otimes m_k}  \otimes v_{1,0}(k)^{\otimes n_k}\right)\right)  \right). $$
	Let $\textbf{p}=\left\lbrace p_k\right\rbrace _{k=1}^{\infty}:=\textbf{m}+\textbf{n}$. By the definitions of real and complex Wiener-It\^o integrals, \eqref{def of real integral} and \eqref{def of complex integral}, respectively, we have
	\begin{align}
		&I_{p}\left(\mathrm{symm}\left( \otimes_{k=1}^{\infty}\left( u_{1,0}(k)^{\otimes m_k}  \otimes v_{1,0}(k)^{\otimes n_k}\right)\right)  \right)\\
		=& \prod_{k=1}^{\infty}I_{p_k}\left( u_{1,0}(k)^{\otimes m_k}  \otimes v_{1,0}(k)^{\otimes n_k}\right) \\
		\overset{\eqref{inverse repre for sub basis}}{=}&\prod_{k=1}^{\infty}\left( \sum_{j=0}^{p_k}\tilde{a}_{k,j}\I_{j, p_k-j}\left(e_k^{\otimes j}\otimes\overline{e}_k^{\otimes p_k-j}\right)\right) \\
		=&\sum_{\textbf{j}\leq \textbf{p}}\left( \prod_{i=1}^{\infty}\tilde{a}_{i,j_i}\right) \I_{\sum_{k=1}^{\infty}j_k,p-\sum_{k=1}^{\infty}j_k}\left(\otimes_{k=1}^{\infty} \left( e_k^{\otimes j_k}\otimes \overline{e}_k^{\otimes (p_k-j_k)} \right) \right) \\
		=&\sum_{l=0}^{p}\I_{l,p-l}\left(\sum_{ \textbf{j}\leq \textbf{p},|\textbf{j}|=l}\left( \prod_{i=1}^{\infty}\tilde{a}_{i,j_i}\right)\otimes_{k=1}^{\infty} \left( e_k^{\otimes j_k}\otimes \overline{e}_k^{\otimes (p_k-j_k)} \right) \right) .
	\end{align}
	
	\textbf{Step 3}	
	For $g_1, g_2\in\left( \mathfrak{H}\oplus\mathfrak{H}\right) ^{\odot p}$  and $g_1+\i g_2$ is given by \eqref{expansion for real},
	\begin{align}
		I_{p}\left(g_1+\i g_2\right)
		&=\sum_{\textbf{m},\textbf{n}\in\Lambda,|\textbf{m}|+|\textbf{n}|=p} \tilde{c}(\textbf{m};\textbf{n})I_{p}\left( \mathrm{symm}\left( \otimes_{k=1}^{\infty}\left( u_{1,0}(k)^{\otimes m_k}  \otimes v_{1,0}(k)^{\otimes n_k}\right)\right)\right) \\
		&=\sum_{\textbf{m},\textbf{n}\in\Lambda,|\textbf{m}|+|\textbf{n}|=p} \tilde{c}(\textbf{m};\textbf{n})\sum_{l=0}^{p}\I_{l,p-l}\left( g_{l,p-l}(\textbf{m},\textbf{n})\right) \\
		&= \sum_{l=0}^{p}\I_{l,p-l}\left(\sum_{\textbf{m},\textbf{n}\in\Lambda,|\textbf{m}|+|\textbf{n}|=p} \tilde{c}(\textbf{m};\textbf{n}) g_{l,p-l}(\textbf{m},\textbf{n})\right) .
	\end{align}
\end{proof}

\bibliographystyle{abbrv}
\bibliography{mybib}

\end{document}